\newtheorem{thm}{Theorem}[section]
\newtheorem{prop}[thm]{Proposition}
\newtheorem{cor}[thm]{Corollary}
\theoremstyle{definition}
\newtheorem{rem}[thm]{Remark}
\newcommand{\be}{\begin{equation}}  \newcommand{\ee}{\end{equation}}
\newcommand{\bea}{\begin{eqnarray*}}  \newcommand{\eea}{\end{eqnarray*}}
\begin{document}

\newcommand{\C}{{\mathbb C}}
\newcommand\R{{\mathbb R}}
\newcommand\Z{{\mathbb Z}}
\newcommand\N{{\mathbb N}}

\newcommand\F{{\mathcal F}}
\renewcommand\H{{\mathcal H}}
\newcommand\I{{\mathcal I}}
\newcommand\J{{\mathcal J}}

\renewcommand\l{\lambda}
\newcommand\G{\Gamma}

\newcommand\E{{\mathbb E}}
\renewcommand\P{{\mathbb P}}

\newcommand\rpp{{\rm RPP}}

\title{Discrete Whittaker processes}
\author{Neil O'Connell}

\thanks{Research supported by the European Research Council (Grant number 669306)}
\address{\parbox{\linewidth}{School of Mathematics and Statistics\\ University College Dublin\\ Dublin 4, Ireland}}

\begin{abstract}
We consider a Markov chain on non-negative integer arrays of a given shape (and satisfying certain constraints)
which is closely related to fundamental $SL(r+1,\R)$ Whittaker functions and the Toda lattice.  
In the index zero case the arrays are reverse plane partitions. We show that this Markov chain has 
non-trivial Markovian projections and a unique entrance law starting from the array with all entries 
equal to~$+\infty$.  We also discuss connections with imaginary exponential functionals of Brownian motion,
a semi-discrete polymer model with purely imaginary disorder,
interacting corner growth processes and discrete $\delta$-Bose gas,
extensions to other root systems, and hitting probabilities for some low rank examples.
\end{abstract}

\maketitle

\thispagestyle{empty}

\section{Introduction}

\subsection{Fundamental Whittaker functions}

Fundamental $SL(r+1,\R)$ Whittaker functions are series solutions to the eigenvalue equation 
\be\label{eeH} \H^r\phi=-\l^2\phi,\ee where
\be\label{HT}
\H^r = -\frac12 \sum_{i=1}^{r+1} \frac{\partial^2}{\partial x_i^2} + \sum_{i=1}^{r}e^{x_i-x_{i+1}}
\ee
is the Hamiltonian of the $(r+1)$-particle quantum Toda chain.  They were first introduced by
Hashizume~\cite{h} in a more general context.

Let $\alpha_1,\alpha_2,\ldots$ be a fixed sequence of complex numbers (independent of $r$)
and denote $\alpha_{ij}=\alpha_i+\alpha_{i+1}+\cdots+\alpha_j$ for $i\le j$. 
Let $r\ge 1$ and define $\nu\in\C^{r+1}$ by $\sum_{i=1}^{r+1}\nu_i=0$ 
and $\alpha_i=\nu_i-\nu_{i+1}$ for $i=1,\ldots,r$.
Then
\begin{equation} \label{phir}
\phi_r(x)=\sum_{n\in\Z_+^r} a_{r}(n) \prod_{i=1}^r e^{n_i(x_i-x_{i+1})+\nu_i x_i}
\end{equation}
satisfies \eqref{eeH} with $\l^2=\sum_{i=1}^{r+1}\nu_i^2/2$
provided the coefficients $a_{r}(n)$ satisfy
\be\label{rec} \left[\sum_{i=1}^{r} n_i^2 - \sum_{i=1}^{r-1}n_i n_{i+1} +\sum_{i=1}^{r} \alpha_i n_i \right] 
a_{r}(n) =  \sum_{i=1}^{r} a_{r}(n-e_i),\ee
where $e_1,\ldots,e_r$ denote the standard basis vectors in $\Z^r$ and
with the convention \hbox{$a_{r}(n) =0$} for $n\notin \Z_+^{r}$.
Ishii and Stade~\cite{is} obtained the following recursive formula for the coefficients~$a_{r}(n)$.  
For $n\in \Z_+^{r}$ and $k\in \Z_+^{r-1}$, define
\be\label{L}
q_{r}(n,k)=\prod_{i=1}^{r} \frac1{(n_i-k_i)! (n_i-k_{i-1}+\alpha_{ir})!} ,
\ee
with the convention $k_0=k_{r}=0$.
Define 
$a_{r}(n)$, $r\ge 1$, $n\in \Z_+^{r}$, recursively by
\be\label{def-a1} a_{1}(n) =\frac1{n! (n+\alpha_1)!}\ee
and, for $r\ge 2$,
\be\label{def-ar}
a_{r}(n) =\sum_{k} q_{r}(n,k) a_{r-1}(k)  .
\ee
where the sum is over $0\le k_i\le n_i$, $i=1,\ldots,r-1$.
Then, for each $r\ge 1$, $a_{r}(n)$ satisfies the difference equation \eqref{rec} with 
\be a_{r}(0) =\prod_{1\le i\le j\le r}\frac{1}{\alpha_{ij}!}.\ee

For example, 
$$\phi_1(x_1,x_2)=I_{\alpha_1}\left(2e^{(x_1-x_2)/2}\right),$$
where $I_a(z)$ is the modified Bessel function of the first kind
$$I_a(z)=\sum_{n=0}^\infty \frac1{n! (n+a)!} \left(\frac{z}{2}\right)^{2n+a}.$$
For $r=2$, the coefficients are given by
\be\label{bf}
a_{2}(n,m)=\frac{(n+m+a+b)!}{n!(n+a)!(n+a+b)! m!(m+b)! (m+a+b)!},
\ee
where $a=\alpha_1$ and $b=\alpha_2$.  This formula is due to Bump~\cite{bump}.

We remark that, in the case $\alpha\equiv 0$, the recursive formula \eqref{def-ar} 
agrees with a special case (for complete flag manifolds) of a formula given in Batyrev, 
Ciocan-Fontanine, Kim and van Straten~\cite[Theorem 5.1.6]{bcks} for the coefficients of hypergeometric 
series of partial flag manifolds (see also Remark~\ref{bat} below).

The formula \eqref{def-ar} is quite similar to Givental's integral formula~\cite{giv,jk,gklo} for 
another family of eigenfunctions of the quantum Toda lattice known as class one Whittaker functions. 
It is this similarity which motivated the present work.  
Gerasimov, Kharchev,  Lebedev and Oblezin~\cite{gklo}
showed that Givental's formula may be understood in terms of some intertwining relations
between the Hamiltonians of quantum Toda chains with different numbers of particles.  
These intertwining relations were extended and given a probabilistic interpretation,
in terms of Brownian motion, in~\cite{noc12}.  In this paper we will show that Ishii and Stade's formula \eqref{def-ar}
admits a similar development, in a discrete setting.

\subsection{Reverse plane partitions}

When $\alpha\equiv 0$, the natural setting for this is in the context of reverse plane partitions.
In order to describe this, we briefly recall some combinatorial definitions.
For more background, see for example~\cite{st}.  An {\em integer partition} $\l$ is a 
sequence of nonnegative integers $\l_1\ge\l_2\ge\cdots$ with $\sum_i \l_i<\infty$.
An integer partition $\l$ may be represented by its {\em Young diagram}, which is a left-justified
array of boxes, with $\l_1$ boxes in the first row, $\l_2$ in the second, and so on.
For example, the integer partition $\l=(4,3,1)$ has Young diagram

$$
\l=(4,3,1)=\ydiagram{4,3,1}
$$

\medskip\noindent
We may also identify $\l$ with the set $\{(i,j)\in\N^2:\ 1\le j\le \l_i\}$, so that $(i,j)\in\l$
refers to the box in the $i^{th}$ row (from top) and $j^{th}$ column (from left) of $\l$.
The non-zero $\l_i$ are referred to as the {\em parts} of $\l$, and the partition with no parts
is called the empty partition and denoted $\l=\emptyset$.
The partitions $\delta_{r+1}=(r,r-1,\ldots,1)$, where $r\ge 1$, are called {\em staircase shapes}.

For two partitions $\mu$ and $\l$, we write $\mu\subset\l$ if the diagram of $\mu$
is contained in that of $\l$.  If $\mu\subset\l$, then the difference 
$$\l/\mu=\{(i,j)\in\l:\ (i,j)\notin \mu\}$$ 
is called a {\em skew partition/diagram}.  
For example, 

$$(4,3,1)/(2,1) = \ydiagram{2+2,1+2,1}$$

\medskip\noindent
Note that if $\mu=\emptyset$, then $\l/\mu=\l$.

For $\l$ a partition, we will denote by $\l^\circ\subset\l$ the 
set of $(i,j)\in\l$, such that $(i+1,j)\in\l$ and $(i,j+1)\in\l$.
For example, if $\l=(4,3,1)$ then $\l^\circ=(3,1)$ is the shaded region
$$\begin{ytableau}
*(gray)&*(gray)&*(gray)&\\
*(gray)&&\\
\\
\end{ytableau}$$

\medskip\noindent
We remark that the notation $\l^\circ$ is not standard, but convenient in our context.

Given an integer partition $\l$, a {\em reverse plane partition $\pi$ with shape $\l$} is a filling of $\l$ 
with non-negative integers $(\pi_{ij},\ (i,j)\in\l)$ which is weakly increasing across rows and down columns. 
For example,

\ytableausetup{boxsize=1.8em}
$$\pi = \begin{ytableau}
\pi_{11} & \pi_{12}&\pi_{13}\\
\pi_{21}&\pi_{22}\\
\pi_{31}\\
\end{ytableau}
=\begin{ytableau}
0&1&3\\
1&1\\
2\\
\end{ytableau}$$
\ytableausetup{boxsize=normal}

\medskip\noindent
is a reverse plane partition with staircase shape $\delta_4=(3,2,1)$.
More generally, a {\em reverse plane partition $\pi$ with shape $\l/\mu$} is a filling of $\l/\mu$ 
with non-negative integers $(\pi_{ij},\ (i,j)\in\l/\mu)$ which is weakly increasing across rows and down columns. 
If $\pi$ is a reverse plane partition with shape $\l$ and $\mu\subset\l$, we denote by 
$$\pi |_{\l/\mu} = (\pi_{ij},\ (i,j)\in\l/\mu)$$ the restriction of $\pi$ to $\l/\mu$,
which is itself a reverse plane partition of shape $\l/\mu$.

\subsection{Discrete Whittaker processes}

Let $\rpp(\l)$ denote the set of reverse plane partitions of shape $\l$.  
Fix $\l$, and consider the (continuous time) Markov chain on $\rpp(\l)$,
defined as follows: for each $(i,j)\in\l$, subtract one from $\pi_{ij}$ at rate 
$$b_{ij}(\pi)=(\pi_{ij}-\pi_{i,j-1})(\pi_{ij}-\pi_{i-1,j}),$$
with the convention $\pi_{i,0}=\pi_{0,j}=0$.  
The infinitesimal generator of this Markov chain is given by the difference operator
$$G^\l = \sum_{(i,j)\in\l} b_{ij}(\pi) D_{\pi_{ij}},$$
where $D_n$ denotes the backward difference operator $D_nf(n)=f(n-1)-f(n)$.

For example, if $\l=(2,1)$ and the current state is 
the reverse plane partition $\pi_{11}=2$, $\pi_{12}=4$, $\pi_{21}=3$ 
then the chain jumps to one of the three adjacent
reverse plane partitions, according to the rates indicated:

$$\begin{ytableau} 1&4\\3\\ \end{ytableau} \quad \stackrel{4}{\longleftarrow} \quad
\begin{ytableau} 2&4\\3\\ \end{ytableau} \quad \stackrel{8}{\longrightarrow} \quad
\begin{ytableau} 2&3\\3\\ \end{ytableau}$$
$$\Big\downarrow \scriptstyle{3}$$
$$\begin{ytableau} 2&4\\2\\ \end{ytableau}$$

\medskip
Note that if $\pi$ is a Markov
chain on $\rpp(\l)$ with generator $G^\l$ and $\mu\subset\l$, then the restriction
of $\pi$ to $\mu$ is a Markov chain on $\rpp(\mu)$ with generator $G^\mu$.  
In particular, the first row of $\pi$ is a Markov chain in its own right,
and it is natural to think of it as an interacting particle system on the non-negative integers: 
the values $n_j:=\pi_{1j},\ j=1,\ldots,\l_1$ are the positions of $\l_1$ particles;
the left-most particle at position $n_1$ jumps to the left at rate $n_1^2$, while for each $j>1$, 
the particle at position $n_j$ jumps to the left at rate $n_j(n_j-n_{j-1})$.

We may also consider restrictions to certain skew diagrams $\l/\mu$.
For this we require that $\mu\subset\l^\circ$.
Remarkably, if the initial law on $\rpp(\l)$ is chosen correctly,
then the restriction $\pi |_{\l/\mu}$ will evolve as a Markov chain in its own right.

The simplest non-trivial example is related to Vandermonde's identity
$${n+m\choose n}=\sum_k {n\choose k} {m\choose k}.$$ 
Let $\l=(2,1)$, $\mu=(1)$, and write 
$\pi_{11}=k$, $\pi_{12}=n$, $\pi_{21}=m$.
In this notation,
$$G^{(2,1)}=k^2 D_k + n(n-k) D_n + m(m-k) D_m.$$
Suppose that, at time zero, $\pi_{12}=n$, $\pi_{21}=m$ and
$\pi_{11}$ is chosen at random according to the probability distribution
$$
p_{n,m}(k)={n+m\choose n}^{-1} {n\choose k} {m\choose k},\qquad 0\le k\le n\wedge m.
$$
Then, if $\pi$ evolves according to $G^{(2,1)}$, the restriction $\pi |_{\l/\mu}=(\pi_{12}, \pi_{21})$ 
is also a Markov chain, in its own filtration, with generator
$$L = \frac{n^3}{n+m} D_{n} + \frac{m^3}{n+m}D_{m}.$$

More generally, if $\l$ is the staircase shape $\delta_{r+1}=(r,r-1,\ldots,1)$ and $\mu=\delta_r$,
then the restriction $\pi |_{\l/\mu}$ represents the `boundary values' $n_i=\pi_{i,r-i+1}$, $i=1,\ldots,r$.
In this setting, we will prove the following.
\begin{thm}
Suppose that, initially, the conditional law of $\pi |_{\delta_r}$, given these boundary values,
is proportional to
\be\label{meas}
W_r(\pi)=\prod_{(i,j)\in\delta_r} {\pi_{i,j+1} \choose \pi_{ij}} {\pi_{i+1,j} \choose \pi_{ij}}.
\ee
Then, if $\pi$ evolves according to $G^{\delta_{r+1}}$, the boundary values
$(\pi_{1,r},\ldots,\pi_{r,1})$ will evolve as a Markov chain on $\Z_+^{r}$ with generator 
\be\label{Lr0}
L^{r}= \sum_{i=1}^r \frac{a_r(n-e_i)}{a_r(n)} D_{n_i},
\ee
where $a_r(n)$ are the series coefficients defined by \eqref{def-ar} with $\alpha\equiv 0$.
\end{thm}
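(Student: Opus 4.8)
The plan is to realise the passage from a reverse plane partition of shape $\delta_{r+1}$ to its vector of boundary values as a Markov function, in the spirit of Rogers and Pitman, with intertwining kernel furnished by the measure \eqref{meas}. Write $\phi(\pi)=(\pi_{1,r},\pi_{2,r-1},\ldots,\pi_{r,1})$, and for $n\in\Z_+^r$ let $K(n,\cdot)$ be the probability measure on $\rpp(\delta_{r+1})$ supported on the fibre $\phi^{-1}(n)$ and proportional to $W_r$ there; its normalisation $Z_r(n)=\sum_{\phi(\pi)=n}W_r(\pi)$ is finite (the fibre is finite) and strictly positive (each binomial in \eqref{meas} is $\ge1$ on an RPP). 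The hypothesis of the theorem says exactly that the initial law of $\pi$ is $\nu_0K$ for some probability measure $\nu_0$ on $\Z_+^r$. Since $K(n,\cdot)$ is carried by $\phi^{-1}(n)$ and every chain in sight decreases the sum of its entries at each jump (hence is non-explosive and absorbed in finite time), the theorem will follow from the generator intertwining $K\,G^{\delta_{r+1}}=L^{r}K$: writing $e_{ij}$ for the array with a single $1$ in cell $(i,j)$, so that the chain generated by $G^{\delta_{r+1}}$ replaces $\pi$ by $\pi-e_{ij}$ at rate $b_{ij}(\pi)$, we must check
\[
\sum_{\pi}K(n,\pi)\,G^{\delta_{r+1}}(\pi,\pi')=\sum_{m}L^{r}(n,m)\,K(m,\pi')
\]
for all $n\in\Z_+^r$ and $\pi'\in\rpp(\delta_{r+1})$, viewing $G^{\delta_{r+1}}$ and $L^r$ as $Q$-matrices (all sums being finite).

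The first ingredient is the normalisation identity $Z_r(n)=a_r(n)\prod_{i=1}^r(n_i!)^2$, which I would prove by induction on $r$ using the anti-diagonal filtration of the staircase. For $\pi\in\rpp(\delta_{r+1})$, let $k_i:=\pi_{i,r-i}$ ($i=1,\ldots,r-1$, with $k_0=k_r=0$) be the boundary values of the inner reverse plane partition $\pi|_{\delta_r}$. Since $(\delta_{r+1})^\circ=\delta_r$ and $(\delta_r)^\circ=\delta_{r-1}$, the factors of $W_r$ indexed by cells of $\delta_{r-1}$ reassemble as $W_{r-1}(\pi|_{\delta_r})$, while the remaining factors, indexed by the cells $(i,r-i)$ with $1\le i\le r-1$, equal $\binom{n_i}{k_i}\binom{n_{i+1}}{k_i}$; hence $W_r(\pi)=W_{r-1}(\pi|_{\delta_r})\prod_{i=1}^{r-1}\binom{n_i}{k_i}\binom{n_{i+1}}{k_i}$. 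Now an RPP of shape $\delta_{r+1}$ with $\phi=n$ is the same data as an RPP of shape $\delta_r$ together with a vector $n$ satisfying $n_i\ge\max(k_i,k_{i-1})$ for all $i$, and these constraints are automatically enforced by the vanishing of the binomials; summing over $\pi$ therefore gives $Z_r(n)=\sum_k Z_{r-1}(k)\prod_{i=1}^{r-1}\binom{n_i}{k_i}\binom{n_{i+1}}{k_i}$. A short rearrangement of factorials identifies this, after dividing through by $\prod_i(n_i!)^2$, with the recursion \eqref{def-ar} for $\alpha\equiv0$, while the base case is $Z_1(n)=1=a_1(n)(n!)^2$ by \eqref{def-a1}. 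In particular $a_r(n)>0$ (so \eqref{Lr0} makes sense) and $a_r(n-e_i)/a_r(n)=n_i^2\,Z_r(n-e_i)/Z_r(n)$, which is what converts the rates of $L^r$ into statements about the $Z_r$.

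The intertwining itself rests on two local identities for $W_r$. For an interior cell $(i,j)\in\delta_r=(\delta_{r+1})^\circ$ a one-line computation with binomial ratios gives
\[
\frac{W_r(\pi+e_{ij})}{W_r(\pi)}\,b_{ij}(\pi+e_{ij})=(\pi_{i,j+1}-\pi_{ij})(\pi_{i+1,j}-\pi_{ij}),
\]
and for a boundary cell $(i,r-i+1)$ likewise $W_r(\pi+e_{i,r-i+1})\,b_{i,r-i+1}(\pi+e_{i,r-i+1})=(\pi_{i,r-i+1}+1)^2\,W_r(\pi)$; both hold with the convention that $W_r$ vanishes off $\rpp(\delta_{r+1})$. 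Fixing $n,\pi'$ one splits on $\phi(\pi')$. When $\phi(\pi')$ is neither $n$ nor any $n-e_l$, both sides of the intertwining vanish. When $\phi(\pi')=n-e_l$, only the jump of the $l$-th boundary cell contributes on the left, and the boundary-cell identity together with $a_r(n-e_l)/a_r(n)=n_l^2Z_r(n-e_l)/Z_r(n)$ from Step~1 makes the two sides agree. When $\phi(\pi')=n$, the interior identity turns the left side into $Z_r(n)^{-1}W_r(\pi')\bigl(\sum_{(i,j)\in\delta_r}(\pi'_{i,j+1}-\pi'_{ij})(\pi'_{i+1,j}-\pi'_{ij})-\sum_{(i,j)\in\delta_{r+1}}b_{ij}(\pi')\bigr)$ and the right side into $-Z_r(n)^{-1}W_r(\pi')\sum_i a_r(n-e_i)/a_r(n)$; by the eigenvalue recursion \eqref{rec} with $\alpha\equiv0$ the last sum equals $\sum_i n_i^2-\sum_i n_in_{i+1}$, and expanding the quadratics on the left --- the squared terms accounting for the outer anti-diagonal values $n_i$, the nearest-neighbour products cancelling against those inside $\sum b_{ij}(\pi')$, the cross terms at cells with $i+j=r$ producing the $\sum_i n_in_{i+1}$ --- the equality collapses to the elementary re-indexing identity
\[
\sum_{(i,j)\in\delta_{r+1}}\pi'_{i-1,j}\,\pi'_{i,j-1}=\sum_{(i,j)\in\delta_{r-1}}\pi'_{i,j+1}\,\pi'_{i+1,j}
\]
(the substitution $(i,j)\mapsto(i-1,j-1)$, with the convention $\pi'_{0,j}=\pi'_{i,0}=0$). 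Assembling the three cases proves $K\,G^{\delta_{r+1}}=L^rK$, whence the Markov function theorem yields that $(\pi_{1,r},\ldots,\pi_{r,1})$ is a Markov chain with generator $L^r$ and initial law $\nu_0$; one also gets for free that the conditional law of $\pi$ given the observed boundary path stays proportional to $W_r$.

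The step I expect to cost the most effort is the $\phi(\pi')=n$ case: although it is all finite bookkeeping, one must track exactly which cells of $\delta_{r+1}$ lie on the outer anti-diagonal, keep the conventions $\pi'_{0,j}=\pi'_{i,0}=0$ consistent across the two local identities, and bring in \eqref{rec} at precisely the right moment --- namely after the identity of Step~1 has been used to rephrase $L^r$ through the $Z_r$. The conceptual heart is the interior identity above, which records that the binomial weight $W_r$ in \eqref{meas} is tuned exactly so that $W_r(\pi+e_{ij})b_{ij}(\pi+e_{ij})/W_r(\pi)$ loses all dependence on the left and top neighbours of $(i,j)$.
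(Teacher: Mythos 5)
Your proof is correct. The route differs from the paper's own proof of the precise version of this statement (Theorem~\ref{mf-nu}): there the argument is by induction on $r$, using the Ishii--Stade kernel $q_r$ and the intertwining relation $h^r\circ q_r=q_r\circ h^{r-1}$ (Proposition~\ref{iq}) to push forward the filtration one anti-diagonal at a time. What you do instead is verify the global Rogers--Pitman intertwining $KG^{\delta_{r+1}}=L^rK$ directly, by a case split on $\phi(\pi')$, two local ratio identities for $W_r$, the recursion \eqref{rec}, and a final re-indexing identity. That is, in spirit, exactly the paper's proof of the more general Theorem~\ref{mf-rpp} (the identity \eqref{ki} there, specialized to $\l=\delta_{r+1}$, $\mu=\delta_r$, $\alpha\equiv 0$, is precisely your telescoping $\sum_{\delta_r}c_{ij}-\sum_{\delta_{r+1}}b_{ij}=-\sum_i n_i^2+\sum_i n_in_{i+1}$), so you have effectively re-derived the $\l/\mu$ argument in this special case rather than the inductive one. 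You also supply an inductive proof of the normalization $\sum_{\phi(\pi)=n}W_r(\pi)=a_r(n)\prod_i n_i!^2$; the paper establishes the equivalent statement $a_r(n)=\sum_{\pi\in\Pi_n^r}w_r(\pi)$ in Section~\ref{mrss} via the factorization $w_r(\pi)=q_r(n,k)w_{r-1}(\pi|_{\delta_r})$ and records the binomial form as an unlabelled remark. The trade-off: the inductive proof of Theorem~\ref{mf-nu} gives the Markovian projections at every level $\delta_k\subset\delta_{r+1}$ simultaneously and meshes with the Givental-type recursive structure, while your direct verification is self-contained, avoids the auxiliary operator $g^r$, and is the natural template for passing to arbitrary shapes $\l/\mu$ as the paper does later.
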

A more precise statement (for more general $\alpha$) is given in Theorem~\ref{mf-nu} below.

We will also show that the Markov chain on $\rpp(\l)$ with generator $G^\l$ has a unique
entrance law starting from $\pi_{ij}=+\infty$ for all $(i,j)\in\l$.  An important ingredient for proving 
this is a law of large numbers (via a large deviation principle) for the distribution \eqref{meas}, 
as the boundary values go to infinity. 

\subsection{Dualities and imaginary exponential functionals of Brownian motion}

When $\alpha=0$, the difference equation \eqref{rec} may be written as $h^{r} a_{r}=0$, where
\be\label{hr0}
h^{r} = \sum_{i=1}^r L_{n_i} - \sum_{i=1}^{r} n_i^2 + \sum_{i=1}^{r-1}n_i n_{i+1} .
\ee
The generator $L^r$ defined by \eqref{Lr0} is the corresponding Doob transform 
$$L^r=a_r(n)^{-1}\circ h^r \circ a_r(n).$$
The difference operator $h^r$ is related to the quantum Toda Hamiltonian $\H^r$, defined by \eqref{HT},
via the duality relation 
$$h_n^r f(n,x) = \H^r_x f(n,x),\qquad f(n,x)=\prod_{i=1}^r e^{n_i(x_{i+1}-x_i)}.$$
Equivalently, $L^r_n h(n,x) = \H^r_x h(n,x)$, where $h(n,x)=a_r(n)^{-1} f(n,x)$.  
Changing variables to $x_k=i\xi_k$, this becomes a {\em Markov} duality relation, namely
$$L^r_n H(n,\xi) = \I^r_\xi H(n,\xi)$$
where $H(n,\xi)=h(n,i\xi)$ and
$$\I^r=\frac12\sum_{j=1}^{r+1} \frac{\partial^2}{\partial \xi_j^2}+\sum_{k=1}^r e^{i(\xi_k-\xi_{k+1})}.$$
This Markov duality provides a relation between the transition probabilities of the Markov
chain with generator $L^r$ and certain imaginary exponential functionals of Brownian motion,
as follows.
Denote by $p_t(n,m)$ the transition kernel of the Markov chain on $\Z_+^r$ with generator $L^r$,
$\P_n$ the law of this process started from $n\in\Z_+^r$,
and by $T_0$ the first hitting time of $0\equiv (0,0,\ldots,0)$.
Let $B=(B_1,\ldots,B_{r+1})$ be a standard Brownian motion in $\R^{r+1}$,
started at the origin, and set
$$Y_k(t)=e^{i(B_k(t)-B_{k+1}(t))},\qquad Z_k(t)=\int_0^t Y_k(s) ds,\qquad k=1,\ldots, r.$$
For $0\ne y,z\in\C^r$ and $n\in\Z_+^r$, write
$$y^{-n}=\prod_{k=1}^r y_k^{-n_k},\quad z^n=\prod_{k=1}^r z_k^{n_k},\quad n!=\prod_{k=1}^r n_i! .$$
Then (see Proposition~\ref{ief1} below) for $n,m\in\Z_+^r$ with $n\ge m$,
$$p_t(n,m)=\frac{a_r(m)}{a_r(n)}  \frac1{(n-m)!} \E  \left[Y(t)^{-n} Z(t)^{n-m}\right].$$
In particular, since $Y(t)^{-1}Z(t)$ has the same law as $Z(t)$, this implies
$$\P_n(T_0\le t) \equiv p_t(n,0) =  \frac{\E Z(t)^{n}}{a_r(n) n!}.$$
Letting $t\to\infty$, this also yields a probabilistic interpretation of the (index zero) fundamental 
$SL(r+1,\R)$ Whittaker function.  
When $\alpha\equiv 0$, the fundamental Whittaker function
$\phi_r(x)$ defined by \eqref{phir} may be written as $\phi_r(x)=\Phi_r(y)$ where
$y_i=e^{x_i-x_{i+1}}$, $i=1,\ldots,r$, and
$$\Phi_r(y) = \sum_{n\in\Z_+^r} a_r(n) y^n.$$
This series is well defined for any $y\in\C^r$, and satisfies
(see Corollary~\ref{ief2} below):
$$\Phi_r(y) = \lim_{t\to\infty} \E \exp\left(\sum_{k=1}^r y_k Z_k(t)\right) .$$
This complements a similar representation for class one Whittaker functions in 
terms of exponential functionals of Brownian motion given in \cite[Proposition 5.1]{boc}.

On the other hand, recall that the first row $n_i=\pi_{1,r-i+1}$, $i=1,\ldots,r$
of the Markov chain with generator $G^{\delta_{r+1}}$, evolves as a Markov
chain on $$E_r=\{n\in\Z_+^r:\ n_1\ge\cdots\ge n_r\ge 0\}$$ with generator
$$M^r= \sum_{i=1}^{r-1} n_i (n_i-n_{i+1}) D_{n_i} +n_r^2 D_{n_r}.$$
Denote the transition kernel of this Markov chain by $q_t(n,m)$.
In this case we have the Markov duality relation
$$M_n^r K(n,\xi) = \J_\xi^r K(n,\xi),$$ where, writing $\xi=a+ib$ with $a,b\in\R^{r+1}$,
$$\J^r = \frac12\sum_{j=1}^{r+1} \frac{\partial^2}{\partial a_j^2}
+\sum_{k=1}^r \Re\left[ e^{i(\xi_k-\xi_{k+1})} \right] \frac{\partial}{\partial a_{k+1}} 
+\sum_{k=1}^r \Im\left[ e^{i(\xi_k-\xi_{k+1})} \right] \frac{\partial}{\partial b_{k+1}} $$
and
$$K(n,\xi)= \prod_{k=1}^r n_k! e^{i n_k (\xi_{k+1}-\xi_k)} .$$
It follows (see Proposition~\ref{ief3} below) that if we define
$$U^r(t) = \int_{0<s_1<s_2<\cdots<s_r<t} Y_1(s_1)\ldots Y_r(s_r) ds_1\ldots ds_r,$$
then
$$q_t(p^r,0) = \frac1{p!^r} \E U^r(t)^p,$$
where $p\in\Z_+$ and $p^r=(p,p,\ldots,p)\in E_r$.
In particular, we deduce the following (see Corollary~\ref{ief4} below):
if $(N_1(t),\ldots,N_r(t)),\ t>0$ is a Markov chain with generator $L^r$ started from $+\infty$
and $\zeta=\inf\{ t>0:\ N_1(t)=0\}$, then
$$\P(\zeta\le t) = \lim_{p\to\infty} \frac1{p!^r} \E U^r(t)^p.$$

\begin{rem}
In the paper~\cite{noc12}, a continuous time version of geometric RSK correspondence
was considered, with Brownian motion as input, and related to the quantum Toda lattice.
In that setting, the class one Whittaker functions play a central role and an important application 
is to the semi-discrete polymer in a Brownian environment introduced in~\cite{oy}.
The random variable $U^r(t)$ is essentially the partition function for the
corresponding random polymer model with purely imaginary disorder, and arises similarly
in the context of the continuous time geometric RSK correspondence with imaginary
Brownian motion as input.  It would be interesting to
understand if there are further parallels with the results of~\cite{noc12} and subsequent
related works on variations of the geometric RSK correspondence with random input
and Whittaker processes, see for example~\cite{bc,ch1,ch2,cosz,noc13,noc14,osz}.
\end{rem}

\subsection{Interacting corner growth processes and discrete $\delta$-Bose gas}

The Markov chain on $\rpp(\l)$ with generator $G^\l$ has an alternative description as a 
system of interacting corner growth processes, and the evolution of the first row is closely
related to a discrete repulsive $\delta$-Bose gas.  For this description it makes sense to 
let $\l=\N^2$, in a sense which will soon be explained.  Let us adopt 
the following convention: if $\mu$ is a partition with $l$ parts, then $\mu_0=+\infty$ 
and $\mu_{i}=0$ for $i>l$.

The corner growth process was first introduced and studied by Johansson~\cite{j}.
It is closely related to the totally asymmetric exclusion process on the integers.
It may be described as a continuous time Markov chain on the set of integer partitions, 
which evolves as follows: if the current state is $\mu$ then for all $i$ such that $\mu_i<\mu_{i-1}$,
add 1 to $\mu_i$ at rate 1.  In other words, everywhere a box can be added to $\mu$ so that the 
resulting diagram is a partition, it is added at rate 1.  For example, if the current state is $\mu=(2,2,1)$,
then any one of the shaded boxes shown below may be added to $\mu$, and each does so with rate 1:

\bigskip

\begin{center}
\begin{ytableau}
\ & &*(gray)\\
& \\
&*(gray)\\
*(gray)\\
\end{ytableau}
\hspace{1.5cm}
\begin{tikzpicture}[scale=0.54,baseline={([yshift=5.4ex]current bounding box)}]
\draw (0,0) -- (0,2) -- (1,2) -- (1,3) -- (2,3) -- (2,5) -- (4,5);
\draw [dotted] (0,1) -- (1,1) -- (1,2);
\draw [dotted] (2,4) -- (3,4) -- (3,5);
\draw [dotted] (1,2) -- (2,2) -- (2,3);

\draw [-latex] (.2,1.8) -- (.8,1.2);
\draw (.62,1.72) node {\tiny 1};

\draw [-latex] (1.2,2.8) -- (1.8,2.2);
\draw (1.62,2.72) node {\tiny 1};

\draw [-latex] (2.2,4.8) -- (2.8,4.2);
\draw (2.62,4.72) node {\tiny 1};
\end{tikzpicture}
\end{center}

\bigskip\noindent The representation on the right will be helpful for later reference.

A reverse plane partition $\pi\in\rpp(\l)$ may be identified with a nested sequence of partitions 
$\mu^0\subset\mu^1\subset\cdots$, where 
$$\mu^k = \{(i,j)\in\l:\ \pi_{ij}\le k\}.$$
Let us define $\rpp(\N^2)$ to be the set of integer arrays $(\pi_{ij},\ (i,j)\in\N^2)$
which satisfy $\pi_{ij}\ge \pi_{i,j-1} \vee \pi_{i-1,j}$ for $(i,j)\in\N^2$,
with the convention $\pi_{ij}=0$ for $(i,j)\notin\N^2$, and for which
$\mu^k =  \{(i,j)\in\l:\ \pi_{ij}\le k\}$, $0\le k< \sup_{i,j}\pi_{ij}$, defines a 
(possibly infinite) nested sequence of integer partitions.
As in the finite case, we will identify an element $\pi\in\rpp(\N^2)$ with the
corresponding nested sequence of partitions $\mu^0\subset\mu^1\subset\cdots$.
Let us also adopt the following conventions: 
if $\mu$ is a partition with $l$ parts, then $\mu_0=+\infty$ and $\mu_{i}=0$ for $i>l$ (as above);
if $\pi\in\rpp(\N^2)$ and $N=\sup_{i,j}\pi_{ij}<\infty$, so that
the associated nested sequence $\mu^0\subset\mu^1\subset\cdots\subset\mu^{N-1}$
is finite, then $\mu^N_i=+\infty$ for all $i\ge 0$.

Recall that if $\pi$ evolves as a Markov chain with generator $G^\l$ and $\mu\subset\l$,
then the restriction of $\pi$ to $\mu$ evolves as a Markov chain with generator $G^\mu$.
Given this property, by Kolmogorov consistency,
we may define a Markov chain on $\rpp(\N^2)$ such that its restriction 
to any given shape $\l$ is a Markov chain with generator~$G^\l$.  The evolution of
this process, in terms of the $\mu^k$, is as follows.  
If the current state of the Markov chain is $\pi = (\mu^0,\mu^1,\ldots)$ then,
for each $i$ and $k$ such that $\mu^k_i<\mu^k_{i-1}\wedge\mu^{k+1}_i$,
add 1 to $\mu^k_i$ at rate given by the product 
$$\# \{0\le j\le k:\ \mu^j_i=\mu^k_i\} \times \# \{ 0\le j\le k:\ \mu^j_{i-1} > \mu^k_i\}.$$
For example, if $N=3$ and the current state is 
$$\mu^0=(1,1) \subset \mu^1=(4,3) \subset \mu^2=(4,3,3,3),$$ 
then the possible transitions are as shown below with rates indicated:
\medskip
\begin{center}
\begin{tikzpicture}[scale=0.54]
\draw (0,0) -- (0,5) -- (1,5) -- (1,7) -- (7,7);
\draw [dotted] (1,6) -- (2,6) -- (2,7);
\draw [shift={(.1,-.1)}] (0,0) -- (0,5) -- (3,5) -- (3,6) -- (4,6) -- (4,7) -- (7,7);
\draw [shift={(.1,-.1)}, dotted] (0,4) -- (1,4) -- (1,5);
\draw [shift={(.2,-.2)}] (0,0) -- (0,3) -- (3,3) -- (3,6) -- (4,6) -- (4,7) -- (7,7);
\draw [shift={(.2,-.2)}, dotted] (0,2) -- (1,2) -- (1,3);
\draw [shift={(.2,-.2)}, dotted] (3,5) -- (4,5) -- (4,6);
\draw [shift={(.2,-.2)}, dotted] (4,6) -- (5,6) -- (5,7);

\draw [-latex] (1.2,6.8) -- (1.8,6.2);
\draw (1.62,6.72) node {\tiny 1};

\draw [shift={(-.9,-2.1)}] [-latex] (1.2,6.8) -- (1.8,6.2);
\draw [shift={(-.9,-2.1)}] (1.62,6.72) node {\tiny 4};

\draw [shift={(3.2,-.2)}] [-latex] (1.2,6.8) -- (1.8,6.2);
\draw [shift={(3.2,-.2)}] (1.62,6.72) node {\tiny 6};

\draw [shift={(2.2,-1.2)}] [-latex] (1.2,6.8) -- (1.8,6.2);
\draw [shift={(2.2,-1.2)}] (1.62,6.72) node {\tiny 4};

\draw [shift={(-.8,-4.2)}] [-latex] (1.2,6.8) -- (1.8,6.2);
\draw [shift={(-.8,-4.2)}] (1.62,6.72) node {\tiny 3};

\end{tikzpicture}
\end{center}

\noindent
In this graphical illustration, the rates shown are given by the product of the number
of lines immediately above and the number of lines immediately to the left.

Note that the value of $N$ is preserved under these dynamics and,
if $N=1$, then the evolution of $\mu^0$ is precisely that of the corner growth process.

If $N<\infty$, the evolution of the first row is related to a discrete $N$-particle replusive
$\delta$-Bose gas on $\Z_+$, with Hamiltonian 
$$H = \sum_{i=1}^N B_{x_i} - \sum_{1\le i<j\le N} \mathbbm{1}_{x_i=x_j}.$$
In this expression, $B_a$ denotes the forward difference operator $$B_a f(a)=f(a+1)-f(a),$$
$\mathbbm{1}_{a=b}$ is the indicator function which takes the value 1 if $a=b$
and 0 otherwise, and $x=(x_1,\ldots,x_N)$ are the locations of $N$ indistinguishable 
particles on $\Z_+$.
The Hamiltonian $H$ acts on functions which are symmetric in the
variables $x_1,\ldots,x_N$.  It has a positive eigenfunction given by
$$\varphi(x) = \prod_{i=1}^N i^{x_{(i)}},$$
where $x_{(1)}\le x_{(2)}\le \cdots \le x_{(N)}$ denotes the weakly increasing rearrangement 
of $x_1,\ldots,x_N$.  The corresponding eigenvalue is $N(N-1)/2$, and the
corresponding Doob transform is given by
$$R=\varphi(x)^{-1}\circ (H-N(N-1)/2) \circ \varphi(x) = \sum_{i=1}^N r_i(x) B_{x_i} $$
where 
$$r_i(x)=\#\{1\le k\le N:\ x_k\le x_{(i)} \}$$
is the `rank' of the particle(s) at location $x_{(i)}$.
This is the generator of $N$ Poisson counting processes on $\Z_+$ with rank-dependent rates.
Similar processes, for example one dimensional Brownian motions with rank dependent drifts, 
are well studied in the literature, see for example~\cite{ld,pp}.
Returning to the above Markov chain on $\rpp(\N^2)$, observe that the evolution
of the first row, that is, the set of locations $\mu^0_1\le \mu^1_1\le \cdots\le \mu^{N-1}_1$,
agrees precisely with the process with generator $R$.
We remark that, given this connection, the connection with the semi-discrete
polymer with purely imaginary disorder described earlier is not too surprising.
See for example \cite[Section 6]{bc} for a detailed discussion on the connection
between the semi-discrete polymer and discrete $\delta$-Bose gas,
which may be adapted to this setting by setting the inverse temperature 
parameter $\beta=\sqrt{-1}$.

\subsection{A simulation}

A simulation of the Markov chain with generator $G^\l$ on reverse plane partitions of 
square shape $\l=50^{50}$, started with $\pi_{ij}=50$ for all $(i,j)\in\l$ and run until
the first time that $\pi_{50,1}=0$, is shown below.  The height
of this profile over the box with coordinates $(i,j)$ is the value of $\pi_{ij}$
at this stopping time.

\medskip
\begin{center}
\includegraphics[scale=.54]{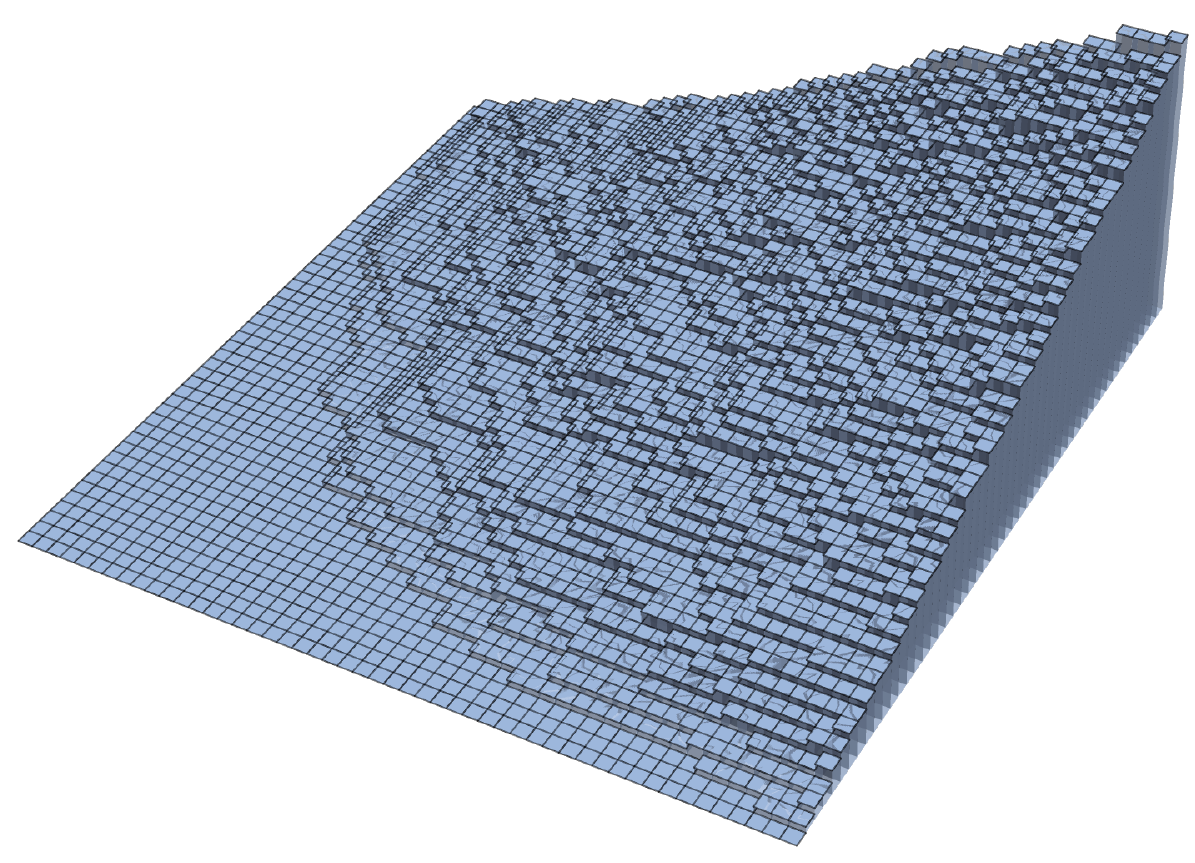}
\end{center}


\subsection{Outline of the paper}

In the next section we present the main results in the staircase setting.  
In Section~\ref{ia} we extend the range of parameters which may be considered
and describe some invariance properties of the associated Markov chains.
In Section \ref{tp} we discuss the Markov chain with generator $L^r$, with some explicit calculations for $r=1,2$,
and describe in detail the connections with imaginary exponential functionals of Brownian motion.
In Section \ref{rpp} we state and prove the main results 
in the context of general shapes and in the final section we outline some extensions to 
other root systems.

\section{Main results for staircase shapes}\label{mrss}

Denote by $L_k$ and $R_k$ the shift operators defined, for functions $f$ on $\Z_+$, by 
$$(L_k f)(k)=\begin{cases} f(k-1) & k>0\\0 & k=0\end{cases}$$
and
$$(R_k f)(k)=f(k+1).$$
The difference equation \eqref{rec} may be written as $h^{r} a_{r}=0$, where
\be\label{hr}
h^{r} = \sum_{i=1}^r L_{n_i} - \sum_{i=1}^{r} n_i^2 + \sum_{i=1}^{r-1}n_i n_{i+1} - \sum_{i=1}^{r} \alpha_i  n_i .
\ee
For $q_r$ defined by \eqref{L} and functions $f$ on $\Z_+^{r-1}$ define $q_r f$ on $\Z_+^r$ by
$$(q_r f)(n) = \sum_{k} q_r(n,k) f(k),$$
where the sum is over $0\le k_i\le n_i$, $i=1,\ldots,r-1$.

\begin{prop}\label{iq} The following intertwining relation holds:
\be\label{ir-f}
h^{r} \circ q_{r} = q_{r} \circ h^{r-1}.
\ee
\end{prop}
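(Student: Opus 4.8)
The plan is to verify the intertwining relation \eqref{ir-f} by direct computation, comparing the action of both sides on an arbitrary function $f$ on $\Z_+^{r-1}$ at a point $n\in\Z_+^r$. Writing $h^r = \sum_{i=1}^r L_{n_i} - Q_r(n)$, where $Q_r(n)=\sum_i n_i^2 - \sum_i n_i n_{i+1} + \sum_i \alpha_i n_i$ is the diagonal (multiplication) part, the identity \eqref{ir-f} splits into a ``shift part'' and a ``diagonal part.'' The diagonal contribution to $h^r\circ q_r$ is $-Q_r(n)\,q_r(n,k)$, while the diagonal contribution coming through $q_r\circ h^{r-1}$ is $-q_r(n,k)\,Q_{r-1}(k)$; so after the dust settles we will need the shift operators to absorb exactly the discrepancy $\big(Q_r(n)-Q_{r-1}(k)\big)q_r(n,k)$.

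First I would record the elementary ``creation/annihilation'' identities satisfied by the kernel $q_r(n,k)$ defined in \eqref{L}. Since $q_r(n,k)$ is a product of reciprocal factorials $1/\big((n_i-k_i)!(n_i-k_{i-1}+\alpha_{ir})!\big)$, lowering $n_i$ by one multiplies the $i$-th factor by $(n_i-k_i)(n_i-k_{i-1}+\alpha_{ir})$, i.e.
\[
q_r(n-e_i,k) = (n_i-k_i)(n_i-k_{i-1}+\alpha_{ir})\, q_r(n,k),
\]
with the obvious interpretation that the right side vanishes when $n_i-e_i$ leaves $\Z_+^r$ or violates $k_{i-1}\le n_i$. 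Similarly, because $L_{n_i}$ acting inside the sum $(q_rf)(n)=\sum_k q_r(n,k)f(k)$ produces $\sum_k q_r(n-e_i,k)f(k)$ over the appropriate range, while on the other side $q_r\circ h^{r-1}$ inserts $L_{k_j}$ acting on $f$, the key will be a summation-by-parts (reindexing $k_j\mapsto k_j+1$) that converts the $L_{k_j}$ terms of $h^{r-1}$ into the $L_{n_i}$ terms of $h^r$. Concretely, I expect the combinatorial heart of the matter to be an identity of the shape
\[
\sum_{i=1}^r q_r(n-e_i,k) \;=\; \Big(Q_r(n)-Q_{r-1}(k)\Big)\, q_r(n,k) \;+\; \sum_{j=1}^{r-1}\big[q_r(n,k)\big|_{k_j\to k_j}\text{-shifted terms}\big],
\]
where the last sum, after reindexing, reproduces $q_r\big(n,\,\cdot\big)$ applied to $\sum_j L_{k_j}f$; matching these term by term gives \eqref{ir-f}.

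The main obstacle, and the step I would spend the most care on, is this last algebraic identity: showing that the quadratic form $Q_r(n)-Q_{r-1}(k)$ is exactly what is produced by summing the ``one-box annihilations'' $\sum_i q_r(n-e_i,k)/q_r(n,k) = \sum_i (n_i-k_i)(n_i-k_{i-1}+\alpha_{ir})$ minus the boundary/reindexing terms. Expanding $\sum_i (n_i-k_i)(n_i-k_{i-1}+\alpha_{ir})$ and regrouping, one should see $\sum_i n_i^2$ appear directly, the cross terms $-\sum_i n_i n_{i+1}$ emerge from telescoping the $k_{i-1}$ and $k_i$ couplings (using $k_0=k_r=0$ and $\alpha_{ir}=\alpha_i+\cdots+\alpha_r$), the terms $+\sum_j k_j^2 - \sum_j k_jk_{j+1}$ recombine into $Q_{r-1}(k)$, and the linear $\alpha$-terms bookkeep correctly because $\alpha_{ir}-\alpha_{i+1,r}=\alpha_i$. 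The one subtlety is the treatment of the summation boundaries: the operators $L_{n_i}$ and $L_{k_j}$ behave differently at $0$, and the ranges $0\le k_i\le n_i$ of the defining sum interact with these shifts, so I would handle the boundary cases ($n_i=0$, $k_j=0$, or $k_j=n_j$) separately and check that the putative extra terms genuinely vanish there, using the convention $a_r(n)=0$ outside $\Z_+^r$. Once the quadratic identity and the boundary bookkeeping are in place, \eqref{ir-f} follows by linearity in $f$.
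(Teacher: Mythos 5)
Your outline tracks the paper's argument quite closely, so let me say what matches, what is cleaner in the paper, and where your sketch stops short.

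The key computational observation you isolate --- that $L_{n_i}q_r(n,k)=(n_i-k_i)(n_i-k_{i-1}+\alpha_{ir})\,q_r(n,k)$ --- is exactly the lemma the paper uses, and your plan of converting the $L_{k_j}$ acting on $f$ into shifts acting on $q_r$ via a reindexing $k\mapsto k+e_j$ is precisely the summation-by-parts that underlies the paper's passage to the adjoint operator $~^*\!h^{r-1}=\sum_j R_{k_j}-P_{r-1}(k)$. The paper packages this more efficiently: instead of manipulating $(h^r\circ q_r)f$ and $(q_r\circ h^{r-1})f$ directly, it observes that \eqref{ir-f} is equivalent to the pointwise kernel identity $h^r_n q_r(n,k)=~^*\!h^{r-1}_k q_r(n,k)$, and then both sides reduce (after dividing by $q_r(n,k)$) to scalar expressions $S_{n,k}$ and $T_{n,k}$ which are checked to coincide by a short expansion. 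You should adopt this reformulation, because it dissolves the boundary worry that you flag as a ``subtlety'': the kernel $q_r(n,k)$ vanishes automatically whenever $n_i<k_i$ or $n_i-k_{i-1}+\alpha_{ir}<0$ (a factorial of a negative integer in the denominator), so the defining sum may be taken over all of $\Z_+^{r-1}$, the reindexing produces no boundary terms, and the conventions $k_0=k_r=0$ and $a_r(n)=0$ off $\Z_+^r$ do the rest. No separate case analysis at $n_i=0$, $k_j=0$, or $k_j=n_j$ is needed.

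The one genuine gap is that you describe, but do not carry out, the final algebraic verification. Saying ``I expect the combinatorial heart of the matter to be an identity of the shape $\dots$'' and ``one should see $\dots$'' is a plan, not a proof; the whole content of the proposition is that the two quadratic-plus-linear expressions actually agree, and this has to be expanded and checked. Concretely, after factoring out $q_r(n,k)$ you need the scalar identity
$$\sum_{i=1}^r (n_i-k_i)(n_i-k_{i-1}+\alpha_{ir}) - P_r(n) \;=\; \sum_{j=1}^{r-1}(n_j-k_j)(n_{j+1}-k_j+\alpha_{j+1,r}) - P_{r-1}(k),$$
where $P_r$ is the potential in \eqref{Pr}, and this is a two-line expansion once one uses $k_0=k_r=0$ and $\alpha_{ir}-\alpha_{i+1,r}=\alpha_i$. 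Write that expansion out and your proof is complete and essentially identical to the paper's.
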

\begin{proof} Let
\be\label{Pr}
P_{r}(n)=\sum_{i=1}^{r} n_i^2 - \sum_{i=1}^{r-1}n_i n_{i+1} + \sum_{i=1}^{r} \alpha_i  n_i .
\ee
It suffices to show that
$$h^r_n q_r(n,k)=~^*\! h^{r-1}_k q_r(n,k),$$
where
$$~^*\! h^{r} = \sum_{i=1}^r R_{n_i} - P_{r}(n).$$

With the convention $k_0=k_r=0$, we compute
$$L_{n_i} q_r(n,k) = (n_i-k_i)(n_i-k_{i-1}+\alpha_{ir}) q_r(n,k),$$
hence
$$h^r_n q_r(n,k) = S_{n,k} \, q_r(n,k),$$
where
\bea
S_{n,k} &=& \sum_{i=1}^r (n_i-k_i)(n_i-k_{i-1}+\alpha_{ir}) - P_{r}(n)\\
&=& \sum_{i=1}^r (k_i k_{i-1} - k_in_i-n_ik_{i-1}+\alpha_{i+1,r} n_i - \alpha_{ir} k_i) +\sum_{i=1}^{r-1} n_i n_{i+1} .
\eea
Similarly,
$$R_{k_j} q_r(n,k) = (n_j-k_j)(n_{j+1}-k_j+\alpha_{j+1,r}) q_r(n,k),$$
hence
$$~^*\! h^{r-1}_k q_r(n,k) = T_{n,k}\, q_r(n,k),$$
where
\bea
T _{n,k} &=& \sum_{j=1}^{r-1} (n_j-k_j)(n_{j+1}-k_j+\alpha_{j+1,r}) - P_{r-1}(k)\\
&=& \sum_{j=1}^{r-1} (n_j n_{j+1} -k_j n_{j+1} - n_j k_j + \alpha_{j+1,r} n_j - \alpha_{jr} k_j) + \sum_{j=1}^{r-2} k_j k_{j+1}.
\eea
Thus $S_{n,k}=T_{n,k}$, as required.
\end{proof}

In the following we will assume that $(\alpha_1,\ldots,\alpha_r)\in\Z_+^r$ and denote
$$\beta_{ij}=\alpha_{i,i+j-1}=\alpha_i+\alpha_{i+1}+\cdots+\alpha_{i+j-1}.$$
Let $D=L-I$ denote the backward difference operator 
$$D_k f(k)=\begin{cases} f(k-1)-f(k) & k>0\\ -f(0) & k=0.\end{cases}$$
Since $h^r a_r=0$ and $a_r(n)>0$ for $n\in\Z_+^r$, the corresponding Doob transform
$$L^{r}=a_{r}(n)^{-1} \circ h^{r} \circ a_{r}(n) = \sum_{i=1}^r \frac{a_r(n-e_i)}{a_r(n)} D_{n_i}$$
generates a Markov chain on $\Z_+^r$.

If $r=1$ then, writing $n=n_1$ and $a=\alpha_1$, $L^1=n(n+a)D_n$.
If $r=2$ then, writing $n=\pi_{12}$, $m=\pi_{21}$, $k=\pi_{11}$, $a=\alpha_1$, $b=\alpha_2$,
and using the formula \eqref{bf}, 
$$L^2= \frac{n(n+a)(n+a+b)}{n+m+a+b} D_n+ \frac{m(m+b)(m+a+b)}{n+m+a+b} D_m .$$

Let $\Pi^{r}$ denote the set of non-negative integer arrays $(\pi_{ij},\ 1\le i+j \le r+1)$ satisfying 
$$\pi_{ij}\ge \pi_{i,j-1} \vee (\pi_{i-1,j}-\beta_{ij}),\qquad 1\le i+j\le r+1$$ 
with the convention $\pi_{i,0}=\pi_{0,j}=0$.  Note that if $\alpha_i=0$ for $1\le i \le r$
then $\Pi^r$ is the set of reverse plane partitions with staircase shape $\delta_{r+1}=(r,r-1,\ldots,1)$.
For $n\in\Z_+^r$, let $\Pi^r_n$ be the set of $\pi\in \Pi^r$ with $\pi_{i,r-i+1}=n_i$, $1\le i\le r$.  

For $\pi\in \Pi^{r}$, set
$$w_{r}(\pi)=\prod_{1\le i+j -1\le r} \frac1{(\pi_{ij}-\pi_{i,j-1})! (\pi_{ij}-\pi_{i-1,j}+\beta_{ij})!} .$$
Note that if $r=1$ and $\pi_{11}=n$, then $w_1(\pi)=a_1(n)$ defined by \eqref{def-a1}.
For $r\ge 2$, if $\pi\in\Pi^r_n$ and $\pi |_{\delta_r}\in\Pi^{r-1}_k$, where $n\in\Z_+^r$ and 
$k\in\Z_+^{r-1}$, then
\bea
\lefteqn{
\prod_{ i+j -1 = r} \frac1{(\pi_{i,j}-\pi_{i,j-1})! (\pi_{i,j}-\pi_{i-1,j}+\beta_{ij})!} }\\
&=& \prod_{i=1}^r \frac1{(\pi_{i,r-i+1}-\pi_{i,r-i})! (\pi_{i,r-i+1}-\pi_{i-1,r-i+1}+\beta_{i,r-i+1})!} \\
&=& \prod_{i=1}^r \frac1{(n_i-k_i)! (n_i-k_{i-1}+\alpha_{ir})! }
\eea
and hence
$$w_r(\pi)=q_r(n,k) w_{r-1}\left( \pi |_{\delta_r} \right) .$$
Moreover, for $n\in\Z_+^r$ and $k\in\Z_+^{r-1}$ we have $q_r(n,k)=0$ unless
$$n_i \ge k_i \vee (k_{i-1}-\alpha_{ir}),\qquad 1\le i \le r.$$  
It therefore follows from \eqref{def-ar} that we can write
$$a_{r}(n)=\sum_{\pi\in \Pi^r_n} w_{r}(\pi).$$

For $n\in\Z_+^r$, let $K^{r}_n$ be the probability distribution on $\Pi^r_n$
defined by $$K^{r}_n(\pi)=w_{r}(\pi) / a_{r}(n).$$
For $\pi\in\Pi^r$ and $1\le i+j\le r+1$, set
$$b_{ij}(\pi)=(\pi_{ij}-\pi_{i,j-1})(\pi_{ij}-\pi_{i-1,j}+\beta_{ij}),$$
with the convention $\pi_{i,0}=\pi_{0,j}=0$.
Let
\be\label{gr}
G^{r} = \sum_{1\le i+j\le r+1} b_{ij}(\pi) D_{\pi_{ij}}.
\ee
For example, if $r=2$ then, writing $n=\pi_{12}$, $m=\pi_{21}$, $k=\pi_{11}$, $a=\alpha_1$, $b=\alpha_2$,
$$G^2 = k(k+a) D_k + n(n-k+a+b) D_n + m(m-k+b) D_m.$$

\begin{thm}\label{mf-nu}
Let $\pi(t),\ t\ge 0$ be a Markov chain on $\Pi^r$ with generator $G^{r}$
and initial law~$K^{r}_n$, for some $n\in \Z_+^{r}$.
Then $N(t)=(\pi_{1,r}(t),\ldots,\pi_{r,1}(t))$ is a Markov chain on $\Z_+^{r}$ with generator $L^{r}$
and, for all $t>0$, the conditional law of $\pi(t)$ given $\{N(s),\ s\le t\}$ is $K^{r}_{N(t)}$.
\end{thm}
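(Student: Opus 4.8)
The plan is to prove this by establishing a Markov function (intertwining) relation, exactly in the spirit of Rogers--Pitman. The key object is the Markov kernel $K^r$ from $\Z_+^r$ to $\Pi^r$ given by $K^r_n(\pi) = w_r(\pi)/a_r(n)$ for $\pi \in \Pi^r_n$, together with the ``projection'' map $\Phi: \Pi^r \to \Z_+^r$ sending $\pi$ to its boundary values $(\pi_{1,r},\ldots,\pi_{r,1})$. The claim will follow from the single algebraic identity
\begin{equation}\label{mfid}
G^r \circ K^r = K^r \circ L^r,
\end{equation}
interpreted as an identity of kernels/operators: for any function $g$ on $\Z_+^r$, $G^r(K^r g) = K^r(L^r g)$, where $(K^r g)(\pi) = g(\Phi(\pi))$ is the lift of $g$ to a function on $\Pi^r$ that depends only on the boundary values. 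Once \eqref{mfid} holds, the standard argument (see Rogers--Pitman, or the treatment in \cite{noc12}) gives both conclusions at once: if $\pi(0) \sim K^r_n$ and $\pi$ evolves by $G^r$, then $N(t) = \Phi(\pi(t))$ is Markov with generator $L^r$, and the conditional law of $\pi(t)$ given $\mathcal F_t^N$ is $K^r_{N(t)}$ for every $t$.

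First I would reduce \eqref{mfid} to an intertwining between difference operators. Write $a_r$ for the function $n \mapsto a_r(n)$ and $w_r$ for $\pi \mapsto w_r(\pi)$. Recall from the excerpt that $L^r = a_r^{-1} \circ h^r \circ a_r$ and that $a_r(n) = \sum_{\pi \in \Pi^r_n} w_r(\pi)$, i.e. $a_r = \Phi_* w_r$ in the sense of summing $w_r$ over fibers. Similarly define the (unnormalized) generator-like operator by conjugating $G^r$ through $w_r$: set $\tilde G^r = w_r \circ G^r \circ w_r^{-1}$ acting on functions on $\Pi^r$. Since $b_{ij}(\pi) = (\pi_{ij}-\pi_{i,j-1})(\pi_{ij}-\pi_{i-1,j}+\beta_{ij})$ is precisely the factor by which $w_r$ changes under $\pi_{ij} \mapsto \pi_{ij}-1$ (this is the defining feature of $w_r$ as a product of reciprocal factorials), one computes directly that $\tilde G^r = \sum_{1\le i+j\le r+1} L_{\pi_{ij}} - (\text{diagonal term})$, where the diagonal term is the sum over boxes of $b_{ij}$ — in other words $\tilde G^r$ has exactly the ``free'' part $\sum L_{\pi_{ij}}$ plus a scalar potential. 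The content of \eqref{mfid} then becomes: summing $\tilde G^r w_r$-lifts over fibers of $\Phi$ reproduces $h^r$ on the base. Concretely, for $g$ on $\Z_+^r$, one needs
\begin{equation}\label{fibersum}
\sum_{\pi \in \Pi^r_n} \big(\tilde G^r\, \widetilde{(a_r g)}\big)(\pi) = \big(h^r (a_r g)\big)(n),
\end{equation}
where $\widetilde{(a_r g)}(\pi) = w_r(\pi)\, g(\Phi(\pi))$; dividing through by $a_r(n)$ and using $L^r = a_r^{-1} h^r a_r$ recovers \eqref{mfid}.

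The main obstacle — and the place where Proposition~\ref{iq} does the real work — is verifying \eqref{fibersum}. My plan is to induct on $r$ using the factorization $w_r(\pi) = q_r(n,k)\, w_{r-1}(\pi|_{\delta_r})$ established in the excerpt, where $n = \Phi(\pi)$ and $k = \Phi(\pi|_{\delta_r})$ is the boundary of the restricted array. Split the box set of $\Pi^r$ into the outer antidiagonal $\{i+j = r+1\}$ (whose boundary values are exactly the $n_i$) and the inner staircase $\delta_r$ (carrying $\pi|_{\delta_r}$). The operator $\tilde G^r$ splits accordingly. The inner part, by the induction hypothesis applied to $\Pi^{r-1}$, contributes $q_r(n,\cdot) \circ h^{r-1}$ summed against $w_{r-1}$; the outer part contributes a sum of $L_{n_i}$ terms together with the remaining potential. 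Combining these and invoking the intertwining $h^r \circ q_r = q_r \circ h^{r-1}$ from Proposition~\ref{iq} should collapse everything to $h^r$ on the base — this is essentially a repackaging of the computation $S_{n,k} = T_{n,k}$ done there, now carried out at the level of the whole fiber sum rather than a single $q_r(n,k)$. The base case $r=1$ is immediate since $w_1 = a_1$, $\Pi^1_n$ is a single point, and $\tilde G^1 = L_{n} - n^2 - \alpha_1 n$ matches $h^1$. The one genuinely fiddly point to watch is boundary behaviour: the operators $L_k$ and $D_k$ treat $k=0$ asymmetrically, and one must check that the constraints defining $\Pi^r_n$ (which can become active, e.g. $\pi_{ij} = \pi_{i,j-1}$ forcing $b_{ij}=0$) exactly kill the terms that would otherwise take $\pi$ outside $\Pi^r$; this is automatic because $b_{ij}$ vanishes precisely on those faces, but it should be stated. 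With \eqref{mfid} in hand, I would then quote the general Markov-function lemma to conclude both assertions of the theorem.
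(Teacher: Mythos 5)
Your high-level strategy — establish a Rogers--Pitman intertwining between $G^r$ and $L^r$, with $K^r$ as the transition kernel, then quote the theory of Markov functions — is exactly what the paper does, and you correctly identify Proposition~\ref{iq} as the engine. The paper's own proof of Theorem~\ref{mf-nu} is a ``one-layer'' version of your plan: it introduces the kernel $\tilde q_r$ acting on pairs $(n,k)\in\Z_+^r\times\Z_+^{r-1}$ and an auxiliary operator $g^r=h^{r-1}_k+\sum_i(n_i-k_i)(n_i-k_{i-1}+\alpha_{ir})D_{n_i}$, proves $h^r\circ\tilde q_r=\tilde q_r\circ g^r$ using Proposition~\ref{iq}, and then closes the argument ``by induction and the theory of Markov functions.'' Your proposal is the monolithic version: verify the full intertwining between $\Z_+^r$ and $\Pi^r$ directly, by induction. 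Those are the same strategy with different bookkeeping.

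However there is a concrete gap in the reduction step. You claim that $\tilde G^r=w_r\circ G^r\circ w_r^{-1}$ takes the form ``free shifts plus a scalar potential,'' i.e.\ $\tilde G^r=\sum_{(i,j)} L_{\pi_{ij}}-\sum_{(i,j)} b_{ij}(\pi)$, because ``$b_{ij}(\pi)$ is precisely the factor by which $w_r$ changes under $\pi_{ij}\mapsto\pi_{ij}-1$.'' This is false for interior boxes. Writing $A=\pi_{ij}-\pi_{i,j-1}$, $B=\pi_{ij}-\pi_{i-1,j}+\beta_{ij}$, $C=\pi_{i,j+1}-\pi_{ij}$, $D=\pi_{i+1,j}-\pi_{ij}+\beta_{i+1,j}$, decreasing $\pi_{ij}$ by one touches \emph{four} factorials in $w_r$ (the two at $(i,j)$ and one each at $(i,j+1)$ and $(i+1,j)$ when those boxes exist), so
\[
\frac{w_r(\pi)}{w_r(\pi-e_{ij})}=\frac{(C+1)(D+1)}{AB}=\frac{(C+1)(D+1)}{b_{ij}(\pi)},
\]
and hence
\[
\tilde G^r f(\pi)=\sum_{(i,j)}(C_{ij}+1)(D_{ij}+1)\,f(\pi-e_{ij})-\Big(\sum_{(i,j)}b_{ij}(\pi)\Big)f(\pi).
\]
Only for boxes on the outer antidiagonal (where $(i,j+1)$ and $(i+1,j)$ do not exist) does the off-diagonal coefficient reduce to $1$. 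So $\tilde G^r$ is not ``$\sum L_{\pi_{ij}}$ plus a potential,'' and your proposed split of $\tilde G^r$ into outer and inner parts does not cleanly decouple. This is exactly the issue that makes the paper's proof of the general Theorem~\ref{mf-rpp} work instead with the forward operator ${}^*\!G^\mu=\sum_{u\in\mu}B_{\pi_u}\circ b_u(\pi)$ and the pointwise identity $H W={}^*\!G^\mu W$; the interior ``forward'' factors $(C+1)(D+1)$ are precisely the $b'_u$ appearing there, and their treatment is the core of the computation. A related imprecision: your identity~\eqref{mfid}, with $K^r$ interpreted as the pullback $g\mapsto g\circ\Phi$, is the exact-lumping condition $G^r(g\circ\Phi)=(L^rg)\circ\Phi$, which is too strong (and false here); the correct Rogers--Pitman condition is the averaged form $K^r\circ G^r=L^r\circ K^r$, equivalently the unnormalised $h^r\circ\Lambda=\Lambda\circ G^r$ tested against \emph{all} functions on $\Pi^r$, not just pullbacks. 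Your \eqref{fibersum} tests it only on pullbacks, which is a strictly weaker statement and would not suffice even if the $\tilde G^r$ computation were repaired.
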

\begin{proof}
For functions $f$ on $\Z_+^r\times\Z_+^{r-1}$, define $\tilde q_r f$ on $\Z_+^r$ by
$$(\tilde q_r f) (n) = \sum_k q_r(n,k) f(n,k),$$
where the sum is over $0\le k_i\le n_i$, $i=1,\ldots,r-1$.  
Let 
$$g^r = h^{r-1}_k + \sum_{i=1}^r \frac{L_{n_i} q_r(n,k)}{q_r(n,k)} D_{n_i} =
h^{r-1}_k + \sum_{i=1}^r (n_i-k_i)(n_i-k_{i-1}+\alpha_{ir}) D_{n_i} .$$
By Proposition~\ref{iq},
\bea
[h^r (\tilde q_r f)] (n) &=& \sum_k h^r_n [q_r(n,k) f(n,k)] \\
&=& \sum_k  \left[ (h^r_n q_r(n,k)) f(n,k) + \sum_{i=1}^r L_{n_i} q_r(n,k) D_{n_i} f(n,k) \right] \\
&=& \sum_k  \left[  q_r(n,k) h^{r-1}_k f(n,k) + \sum_{i=1}^r L_{n_i} q_r(n,k) D_{n_i} f(n,k) \right] \\
&=& \sum_k q_r(n,k) g^r f(n,k) \\
&=& [\tilde q_r (g^r f)](n).
\eea
The statement of theorem follows, by induction and the theory of Markov functions~\cite{kurtz,rp}.
The application of the latter in this context (and indeed for all the examples considered in this paper)
is free of technical considerations since,
given the initial law, the relevant part of the state space is finite.
\end{proof}

In Section~\ref{rpp} we will present a more general version of Theorem~\ref{mf-nu},
valid for arbitrary shapes.  We will also prove, again in a more general setting, the existence 
of a unique entrance law for the Markov chain with generator $G^r$,
starting from the array with all entries equal to $+\infty$.  The following is a special case of 
Theorem~\ref{el}.

\begin{thm}\label{el-nu} The Markov chain on $\Pi^r$ with generator $G^r$ has a unique
entrance law starting from $\pi_{ij}=+\infty$ for $1\le i+j\le r+1$.  Under this entrance law, 
$N(t)=(\pi_{1,r}(t),\ldots,\pi_{r,1}(t))$ is a Markov chain on $\Z_+^{r}$ with generator $L^{r}$
and, for all $t>0$, the conditional law of $\pi(t)$ given $\{N(s),\ s\le t\}$ is $K^{r}_{N(t)}$.
\end{thm}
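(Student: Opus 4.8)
The plan is to derive the entrance law statement from the finite-initial-condition result of Theorem~\ref{mf-nu} by a compactness/consistency argument, using a large deviation (law of large numbers) input to control the measures $K^r_n$ as $n\to\infty$. First I would fix the structure: the state space $\Pi^r$ is, for each fixed tuple of boundary values, a finite set, and the generator $G^r$ restricted to the set of arrays with $\pi_{i,r-i+1}\le n_i$ for all $i$ is a finite-state chain; since the $b_{ij}(\pi)$ vanish on the relevant boundaries, the dynamics never leave such a truncated set once it starts there and the boundary values are non-increasing in time. So for any initial law supported on a set with bounded boundary values, the chain is a genuine finite-state continuous-time Markov chain, and Theorem~\ref{mf-nu} applies verbatim when the initial law is $K^r_n$.

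Next I would construct a candidate entrance law. For $t>0$ and a bounded measurable test function $f$ on $\Pi^r$, define $\mu_t(f)=\lim_{n\to\infty}\E_{K^r_n}[f(\pi(t))]$, where the limit is taken along $n=(n_1,\ldots,n_r)$ with $\min_i n_i\to\infty$. To show this limit exists and is independent of the manner in which $n\to\infty$, I would use the Markov function decomposition from Theorem~\ref{mf-nu}: under $K^r_n$, the law of $\pi(t)$ is obtained by first running the $L^r$-chain $N(t)$ from $N(0)=n$, then sampling $\pi(t)\sim K^r_{N(t)}$. Hence $\E_{K^r_n}[f(\pi(t))]=\E_n\!\big[\bar f(N(t))\big]$ where $\bar f(m)=K^r_m(f)$ is bounded by $\|f\|_\infty$. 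So the question reduces to: does the $L^r$-chain started from $n$, evaluated at time $t$, converge in law as $n\to\infty$? This is exactly the existence of an entrance law for the $L^r$-chain from $+\infty$, and I would obtain it from the transition kernel $p_t(n,m)$: one checks (using the Doob-transform structure $L^r=a_r^{-1}\circ h^r\circ a_r$ and the explicit series coefficients) that $p_t(n,m)$ has a limit $\pi_t(m):=\lim_{n\to\infty}p_t(n,m)$, for instance via the duality representation $p_t(n,m)=\frac{a_r(m)}{a_r(n)}\frac1{(n-m)!}\E[Y(t)^{-n}Z(t)^{n-m}]$ and a law of large numbers for $K^r_n$ (the LDP alluded to in the introduction), which controls the ratios $a_r(n-e_i)/a_r(n)$ and shows the mass escapes to $+\infty$ in a controlled way. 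Granting this, $\mu_t(f)=\sum_m \pi_t(m)\,K^r_m(f)$, so $\mu_t$ is precisely the measure whose $N$-marginal is the $L^r$-entrance-law-at-time-$t$ and whose conditional law given the boundary values is $K^r_{N(t)}$ — which is the asserted description.

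Then I would verify that $(\mu_t)_{t>0}$ is an entrance law, i.e. $\mu_s P_{t-s}=\mu_t$ for $0<s<t$, where $P$ is the $G^r$-semigroup, and that $\mu_t\Rightarrow\delta_{+\infty}$ as $t\downarrow 0$. The semigroup identity is inherited from the corresponding identity for the $L^r$-entrance law together with the intertwining $P_t(f)(\pi\text{ with boundary }n)$-vs-$K^r$ relation, which is just Theorem~\ref{mf-nu} applied at the deterministic initial condition $K^r_m$ and then integrated against $\pi_s(m)$; concretely one writes $\mu_s P_{t-s}(f)=\sum_m\pi_s(m)\E_{K^r_m}[f(\pi(t-s))]=\sum_m\pi_s(m)\E_m[\bar f(N(t-s))]=\sum_{m'}\big(\sum_m\pi_s(m)p_{t-s}(m,m')\big)\bar f(m')=\sum_{m'}\pi_t(m')\bar f(m')=\mu_t(f)$, using the $L^r$-entrance-law identity in the penultimate step. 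The short-time behaviour follows because under $K^r_n$ with $\min_i n_i$ large, at any fixed small time the boundary values have decreased by only a bounded amount in distribution (the total jump rate out of a configuration with boundary $n$ grows, but the $L^r$-chain started from $+\infty$ has $N(t)\to+\infty$ in probability as $t\downarrow0$, again from the LDP), so $\mu_t$ concentrates near $+\infty$. For uniqueness: any entrance law $(\tilde\mu_t)$ for $G^r$ projects, via the same Markov-function argument, to an entrance law for $L^r$ from $+\infty$, and its conditional law given the boundary values at time $t$ must equal $K^r_{N(t)}$ (because the $K^r$-conditioning is a closed property preserved by the dynamics, by Theorem~\ref{mf-nu} applied after any positive time $s<t$ and letting $s\downarrow0$); since the $L^r$-entrance law is unique, so is $(\tilde\mu_t)$.

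The main obstacle is the convergence of $p_t(n,m)$ as $n\to\infty$ and the accompanying law of large numbers for $K^r_n$: everything else is bookkeeping with the Markov-function identity already established in Theorem~\ref{mf-nu}. Establishing that the $L^r$-chain has a well-defined entrance law from $+\infty$ — equivalently, that $\lim_{n\to\infty}p_t(n,m)$ exists, is a probability (or sub-probability) kernel in $m$, and satisfies the Chapman–Kolmogorov-type entrance identity — requires genuine estimates: one must show the ratios $a_r(n-e_i)/a_r(n)$ behave like $t^{-1}$-scale quantities uniformly enough, and that no mass is lost to infinity at positive times. I expect this to be handled (as the introduction signals) by the large deviation principle for the normalized measures $w_r(\pi)/a_r(n)$ on $\Pi^r_n$ as $n\to\infty$, giving a deterministic limit shape and hence the needed asymptotics of $a_r$; with that LDP in hand the remaining arguments are the routine compactness, semigroup, and uniqueness steps sketched above.
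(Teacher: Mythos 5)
Your proposal follows a genuinely different route from the paper, but it has gaps that the paper's route avoids.

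The paper's proof of existence and uniqueness of the entrance law rests entirely on a \emph{monotone coupling}: if $\rho(0)\le\pi(0)$ componentwise, then because $b_{ij}(\rho)\ge b_{ij}(\pi)$ whenever $\rho_{ij}=\pi_{ij}$, one can couple the two $G^r$-chains so that $\rho(t)\le\pi(t)$ for all $t$ almost surely. This gives stochastic monotonicity of the law of the process in its initial condition, whence the increasing limit as $\pi(0)\to+\infty$ exists and is unique in law, taking values a priori in the compactification $\Pi^{r,*}$; a short separate argument (induction on the shape) shows the limit actually lives in $\Pi^r$ for $t>0$. The Markovian projection part is then obtained by showing, again via the coupling, that $K^r_n$ is stochastically increasing in $n$, and using the LDP (Proposition~\ref{ldp}) only to check that $K^r_n\Rightarrow\delta_{+\infty}$ as $n\to+\infty$. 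This coupling idea is absent from your proposal, and it is precisely what makes both existence and uniqueness go through cleanly.

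Your substitute for the coupling has two concrete problems. First, you reduce existence to showing $\lim_{n\to\infty}p_t(n,m)$ exists and is a probability, and propose to do this ``via the duality representation'' of Proposition~\ref{ief1}. But that representation is
$$p_t(n,m)=\frac{a_r(m)}{a_r(n)}\,\frac{1}{(n-m)!}\,\E\!\left[Y(t)^{-n}Z(t)^{n-m}\right],$$
in which the prefactor decays superexponentially in $n$ while the expectation is a high oscillatory moment of complex exponentials; extracting the limit requires resolving a delicate cancellation between these two pieces, and this is essentially the entire content of the existence statement, not a check. (The paper also only proves the duality formula for $\alpha\equiv 0$, whereas the theorem is stated for $\alpha\in\Z_+^r$.) You yourself flag that ``no mass is lost at infinity'' needs ``genuine estimates''; in the paper this is handled by a different mechanism (the coupling plus the induction on the shape), not by the LDP. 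Second, your uniqueness argument is circular: you assert that any entrance law must have the $K^r$-conditional structure ``because the $K^r$-conditioning is a closed property preserved by the dynamics, by Theorem~\ref{mf-nu}.'' But Theorem~\ref{mf-nu} says only that the $K^r$-structure is preserved \emph{if it holds at time $0$}; an arbitrary entrance law need not have it at any positive time, and ``letting $s\downarrow 0$'' does not supply it. The coupling is what removes the need for this step: it identifies the entrance law as the unique stochastic limit of laws started from finite configurations, with no assumption on its conditional structure.
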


\begin{rem}
When $\alpha\equiv 0$, the normalised coefficients 
$$A_r(n)=\left(\prod_{i=1}^r n_i!^2\right) a_r(n)$$
are given, for $n\in\Z_+^r$, by the binomial sum formula
$$A_r(n)=\sum_{\pi\in\Pi^r_n} W_r(\pi),\qquad
W_r(\pi)=\prod_{(i,j)\in\delta_r} {\pi_{i,j+1} \choose \pi_{ij}} {\pi_{i+1,j} \choose \pi_{ij}}.$$
These are the unique solution to $H^r A_r=0$ on $\Z_+^r$ with $A_r(0)=1$, where
$$H^r = \sum_{i=1}^r n_i^2 D_{n_i} + \sum_{i=1}^{r-1} n_i n_{i+1}.$$
\end{rem}
\begin{rem}
The diagonal values $a_n=A_3(n,n,n)$ are the Ap\'ery numbers
$$a_n=\sum_k {n \choose k}^2 {n +k \choose k}^2.$$
associated with $\zeta(3)$.  This sequence satisfies the three-term recurrence
\be\label{rec-Ap}
n^3 a_n = (34n^3-51n^2+27n-5)a_{n-1}-(n-1)^3 a_{n-2},
\ee
with $a_0=1$ and $a_1=5$.  We remark that this recurrence may
be derived, in an elementary way, using the connection to the
Toda lattice.  The latter implies that $A_3(n,m,l)$ is annihilated 
by the three commuting difference operators
$$n^2 D_n + m^2 D_m +l^2 D_l+nm+ml,\quad mn^2D_n+(n-l)m^2D_m+ml^2D_l,$$
$$n^2l^2D_{n,l}-l(l-m)n^2D_n-nm^2lD_m+n(m-n)l^2D_l,$$
where $D_{n,l}f(n,l)=f(n-1,l-1)-f(n,l).$
The corresponding difference equations may be combined to obtain \eqref{rec-Ap}.
\end{rem}

\section{More general parameters and symmetries}\label{ia}
The assumption $(\alpha_1,\ldots,\alpha_r)\in\Z_+^r$ is not necessary for
some version of Theorem~\ref{mf-nu} to hold. 
For example, if $(\alpha_1,\ldots,\alpha_r)\in\Z^r$ then the statement may be
modified as follows.  
Denote by $\Pi^{r,\alpha}$ the set of non-negative integer arrays $(\pi_{ij},\ 1\le i+j \le r+1)$ satisfying 
$$\pi_{ij}\ge \omega_{ij} \vee \pi_{i,j-1} \vee (\pi_{i-1,j}-\beta_{ij}),\qquad 1\le i+j\le r+1,$$ 
where $\omega=(\omega_{ij},\ 1\le i+j \le r+1)$ is the unique solution to 
\be\label{w-rec}
\omega_{ij}=\omega_{i,j-1}\vee (\omega_{i-1,j}-\beta_{ij}),\qquad 1\le i+j\le r+1,
\ee
with the conventions $\pi_{i,0}=\pi_{0,j}=\omega_{i,0}=\omega_{0,j}=0$.
This agrees with the previous definition when $(\alpha_1,\ldots,\alpha_r)\in\Z_+^r$,
since $\omega\equiv 0$ in that case.
Similarly, let $\Z_+^{r,\alpha}$ denote the set of $n\in\Z_+^r$ satisfying
$n_i\ge\omega_{i,r-i+1}$, $i=1,\ldots,r$.  Then the statement and proof of Theorem~\ref{mf-nu}
remain valid, as written, with $\Pi^r$ and $\Z_+^r$ replaced by $\Pi^{r,\alpha}$ and $\Z_+^{r,\alpha}$,
respectively.

The array $\omega\in\Pi^{r,\alpha}$ is an absorbing state
for the Markov chain with generator $G^r$.  
It is given explicitly as follows.  Consider the triangular array
$(\nu_i^k,\ 1\le i\le k\le r+1)$ defined by $\alpha_i=\nu^k_i-\nu^k_{i+1}$
for $1\le i\le k\le r+1$ and $\nu_1^k+\nu_2^k+\cdots+\nu_k^k=0$ for $1\le k\le r+1$.
For each $k$, denote by $\tilde\nu^k_1\ge\tilde\nu^k_2\ge\cdots\ge\tilde\nu^k_k$
the decreasing rearrangement of $\nu^k_1,\ldots,\nu^k_k$.  Then, for 
$1\le i\le k\le r+1$, we have
$$\omega_{i,k-i}=\tilde \nu^k_1+\cdots\tilde \nu^k_i - \nu^k_1-\cdots-\nu^k_i.$$
It is straightforward to verify that this satisfies \eqref{w-rec}, by induction over $k$.

Note that if $(\alpha_1,\ldots,\alpha_r)\in\Z_-^r$, then
$$\omega_{ij}=-\beta_{1j}-\beta_{2j}-\cdots-\beta_{ij},\qquad 1\le i+j\le r+1.$$
This leads us to observe the following basic symmetry.  
For $(\alpha_1,\ldots,\alpha_r)\in\Z^r$, define
$$\gamma_{ij}=-\beta_{1j}-\beta_{2j}-\cdots-\beta_{ij},\qquad 1\le i+j\le r+1.$$
From the definitions, if $\pi\in\Pi^r=\Pi^{r,\alpha}$ then $\hat\pi\in\Pi^{r,-\alpha}$, where
$$\hat\pi_{ij}=\pi_{ji}-\gamma_{ji}, \qquad 1\le i+j\le r+1.$$
Moreover, if $\pi(t)$ is a Markov chain on $\Pi^{r,\alpha}$ with generator $G^r=G^{r,\alpha}$ 
then $\hat\pi(t)$ is a Markov chain on $\Pi^{r,-\alpha}$ with generator $G^{r,-\alpha}$.
If $(\alpha_1,\ldots,\alpha_r)\in\Z_-^r$ then $\omega=\gamma$, and the transformation $\pi\mapsto \hat\pi$ 
takes us back to the simplest case with $\omega\equiv0$.

Now consider the Markov chain with state space $\Z^{r,\alpha}$ and generator 
$L^r=L^{r,\alpha}$, where $\alpha\in\Z^r$.  Let us write $\alpha_i=\nu_i-\nu_{i+1}$, 
$i=1,\ldots,r$, where $\nu\in (\Z/(r+1))^{r+1}$ with $\sum_i\nu_i=0$, and
denote $a_r(n)=a_{r,\nu}(n)$.
From the above,
$$\omega_{i,r-i+1}=\tilde \nu_1+\cdots\tilde \nu_i - \nu_1-\cdots-\nu_i,\qquad i=1,\ldots,r$$
where $\tilde\nu_1\ge\tilde\nu_2\ge\cdots\ge\tilde\nu_{r+1}$
is the decreasing rearrangement of $\nu_1,\ldots,\nu_{r+1}$.

For $n\in\Z_+^{r,\alpha}$, define 
$$n'_i=n_i+ \nu_1+\cdots+\nu_i, \qquad j=1,\ldots,r.$$  
Let 
$$S^{r,\nu}=(\Z_+^{r,\alpha})'=(\tilde\nu_1,\tilde\nu_1+\tilde\nu_2,\ldots,\tilde\nu_1+\cdots+\tilde\nu_r)+\Z_+^r,$$ 
and note that this set is invariant under permutations 
of the parameters $\nu_1,\ldots,\nu_{r+1}$.
For $n\in\Z_+^{r,\alpha}$, define $\tilde a_{r,\nu}(n')=a_{r,\nu}(n)$.
For example,
$$\tilde a_{1,\nu}(n')=\frac1{(n'-\nu_1)! (n'-\nu_2)!}$$
and, using \eqref{bf},
$$\tilde a_{2,\nu}(n',m')=(n'+m')!\prod_{i=1}^3\frac1{(n'-\nu_i)!(m'+\nu_i)!}.$$
\begin{prop}
For any fixed $n'\in S^{r,\nu} $, $\tilde a_{r,\nu}(n')$ is invariant under 
permutations of the parameters $\nu_1,\ldots,\nu_{r+1}$.
\end{prop}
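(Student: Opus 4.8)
The plan is to reduce everything, via the recursion \eqref{rec}, to the invariance of a single value — that of $\tilde a_{r,\nu}$ at the minimal point of $S^{r,\nu}$ — and then to evaluate that value explicitly. Write $\underline n=(\tilde\nu_1,\tilde\nu_1+\tilde\nu_2,\dots,\tilde\nu_1+\cdots+\tilde\nu_r)$, so that $S^{r,\nu}=\underline n+\Z_+^r$. First I would rewrite \eqref{rec} in the shifted variable. Passing from $n$ to $n'$ (equivalently, completing the square, as in the duality between $h^r$ and the Toda Hamiltonian $\H^r$), one checks that the multiplier $P_r$ of \eqref{Pr} becomes, with the convention $n'_0=n'_{r+1}:=0$,
$$P_r(n)=\frac12\sum_{j=1}^{r+1}(n'_j-n'_{j-1})^2-\frac12\sum_{j=1}^{r+1}\nu_j^2=:Q(n')-\tfrac12\|\nu\|^2 .$$
So, regarded as a recursion for $\tilde a_{r,\nu}$ on $S^{r,\nu}$ (with the convention $\tilde a_{r,\nu}\equiv 0$ off $S^{r,\nu}$, under which \eqref{rec} persists), its multiplier $Q(n')-\tfrac12\|\nu\|^2$ depends on $\nu$ only through $\|\nu\|^2$; the moves $n\mapsto n-e_i$ become $\nu$-independent shifts $n'\mapsto n'-(e_i+\cdots+e_r)$; and, as already noted in the text, the domain $S^{r,\nu}$ is permutation-invariant. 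Hence for every permutation $\sigma$ of $\{1,\dots,r+1\}$, the functions $\tilde a_{r,\nu}$ and $\tilde a_{r,\sigma\nu}$ satisfy one and the same recursion on one and the same set $S^{r,\nu}$.

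Next I would observe that this recursion, together with the value at the corner $\underline n$, has at most one solution. The quadratic form $Q$ is positive definite and strictly convex in $(n'_1,\dots,n'_r)$, and its gradient at $\underline n$ has $k$-th component $\tilde\nu_k-\tilde\nu_{k+1}\ge 0$ because $\tilde\nu$ is decreasing — which is exactly the first-order optimality condition for a convex minimum at the lower corner $\underline n$ of the region $\{n':n'_j\ge\tilde\nu_1+\cdots+\tilde\nu_j,\ 1\le j\le r\}$. Therefore $Q$ attains its minimum $\tfrac12\|\nu\|^2$ over that region uniquely at $\underline n$, so $P_r>0$ on $S^{r,\nu}\setminus\{\underline n\}$ while $P_r=0$ at $\underline n$. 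Consequently, for $n'\in S^{r,\nu}\setminus\{\underline n\}$, \eqref{rec} expresses $\tilde a_{r,\nu}(n')$ as a positive combination of its values at points strictly smaller in $\sum_k(n'_k-\underline n_k)$ (values outside $S^{r,\nu}$ being $0$), so the whole function is determined by $\tilde a_{r,\nu}(\underline n)$. It thus remains to show that $\tilde a_{r,\nu}(\underline n)$ is symmetric in $\nu$.

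For this, note that $\underline n$ is the boundary of the absorbing array $\omega$, and since $\omega$ is the pointwise-minimal element of $\Pi^r$ it is the only element of $\Pi^r$ with that boundary; hence $\tilde a_{r,\nu}(\underline n)=a_{r,\nu}(\omega_{1,r},\dots,\omega_{r,1})=w_r(\omega)$. By \eqref{w-rec}, for each box $(i,j)$ one of the two factorials defining $w_r$ is $0!$ and the other is $|\delta_{ij}|!$, where $\delta_{ij}:=\omega_{i-1,j}-\beta_{ij}-\omega_{i,j-1}$; thus $w_r(\omega)=\prod_{1\le i+j\le r+1}1/|\delta_{ij}|!$. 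Plugging in the explicit formula for $\omega$ in terms of the triangular array $(\nu_i^k)$, and using that consecutive levels of that array differ by an additive constant (so the difference $\nu_a^k-\nu_b^k$ with $a,b\le k$ is independent of $k$ and equals $\nu_a-\nu_b$), a short computation gives, for the box on the antidiagonal $i+j=k$,
$$\delta_{ij}=\nu_k^k-\tau_i^{(k)},$$
where $\tau^{(k)}$ is the decreasing rearrangement of $(\nu_1^k,\dots,\nu_{k-1}^k)$. As $i$ runs over $1,\dots,k-1$ the numbers $|\delta_{ij}|$ run over $\{|\nu_a-\nu_k|:1\le a<k\}$, and as $k$ runs over $2,\dots,r+1$ they run over all pairwise absolute differences $\{|\nu_a-\nu_b|:1\le a<b\le r+1\}$. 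Hence
$$\tilde a_{r,\nu}(\underline n)=w_r(\omega)=\prod_{1\le a<b\le r+1}\frac1{|\nu_a-\nu_b|!},$$
which is manifestly permutation-invariant. (As a check: when $\nu$ is decreasing this is $a_r(0)=\prod_{1\le i\le j\le r}1/\alpha_{ij}!$.) Together with the uniqueness above, this proves the proposition.

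The only place with real work is the last step — evaluating $w_r(\omega)$ — which requires unwinding the recursion \eqref{w-rec} for $\omega$ through the levels of the array $(\nu_i^k)$; it is bookkeeping rather than anything deep. The reduction in the first paragraph is a change of variables, and the uniqueness in the second is a one-line convexity argument.
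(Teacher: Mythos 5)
Your route is genuinely different from the paper's. The paper's proof invokes a one-parameter family of Ishii--Stade recursions (their Theorem 15, with the free choice $t\in\{1,\dots,r+1\}$): in the primed variables the kernel $q_{r,\nu}$ becomes $\tilde q_{r,\nu_t}$, which depends on $\nu$ only through the single entry $\nu_t$, and since $t$ is arbitrary the symmetry follows by induction on $r$. You instead reduce via the recursion \eqref{rec} itself, showing that in primed variables its multiplier is $Q(n')-\tfrac12\|\nu\|^2$ (this computation is correct), then use strict convexity of $Q$ to get uniqueness, then evaluate the corner value. This is a legitimate and arguably more transparent strategy that needs no input from \cite{is} beyond what the paper already uses to define $a_r$; the price is that all the work gets pushed into the corner evaluation.

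However, there is a real gap at the corner. You assert that, ``since $\omega$ is the pointwise-minimal element of $\Pi^r$ it is the only element of $\Pi^r$ with that boundary,'' and hence $\tilde a_{r,\nu}(\underline n)=w_r(\omega)$. That implication does not follow: pointwise minimality of $\omega$ over all of $\Pi^{r,\alpha}$ gives $\pi\ge\omega$, but it does not by itself prevent some $\pi>\omega$ in the interior from sharing $\omega$'s boundary values (compare $\alpha\equiv 0$ with boundary $(2,1)$, where $\pi_{11}$ can be $0$ or $1$). What you actually need, and what is true but requires an argument, is the stronger fact that for each $i$ on the sub-boundary $i+j=r$ at least one of the two constraints pinning $\pi_{ij}$ from above is tight at $\omega$, namely $\omega_{i,r+1-i}=\omega_{i,r-i}$ or $\omega_{i+1,r-i}+\alpha_{i+1,r}=\omega_{i,r-i}$. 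Using the explicit formula $\omega_{i,k-i}=\sum_{a\le i}(\tilde\nu^{[k]}_a-\nu_a)$, one checks this by inserting $\nu_{r+1}$ into the sorted list $\tilde\nu^{[r]}$: if it lands in position $p>i$ the first equality holds, and if $p\le i$ the second holds. Only with this dichotomy in hand does the sandwich force $\pi|_{\delta_r}$ to have boundary $\omega^{r-1}_\partial$ and the induction to close. A similar remark applies to your identification of the multiset $\{|\delta_{ij}|:i+j\le r+1\}$ with $\{|\nu_a-\nu_b|:a<b\}$, which you sketch but do not prove; it is true, but the ``short computation'' hides exactly the same sorting bookkeeping.

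Two smaller points. First, the change of variables $n'_i=n_i+\nu_1+\cdots+\nu_i$ sends the step $n\mapsto n-e_i$ to $n'\mapsto n'-e_i$, not to $n'\mapsto n'-(e_i+\cdots+e_r)$; the shift is indeed $\nu$-independent so your argument is unaffected, but the statement is wrong as written. Second, the convention ``$\tilde a_{r,\nu}\equiv 0$ off $S^{r,\nu}$, under which \eqref{rec} persists'' deserves a sentence: it holds because $a_r$ is defined on all of $\Z_+^r$ by \eqref{def-a1}--\eqref{def-ar} and vanishes on $\Z_+^r\setminus\Z_+^{r,\alpha}$ (the inverse factorials kill it there), so \eqref{rec} with the zero convention on $\Z_+^r$ already implies the version you want on $S^{r,\nu}$.
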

\begin{proof}
This follows from a more general form of the recursion \eqref{def-ar} 
given in \cite[Theorem 15]{is}.  Let $t\in\{1,2,\ldots,r+1\}$
and define, for $n\in\Z_+^r$ and $k\in\Z_+^{r-1}$,
$$q_{r,\nu}(n,k)=\prod_{i=1}^{t-1} \frac1{(n_i-k_i)!(n_i-k_{i-1}+\nu_i-\nu_t)!}
\prod_{i=t}^{r} \frac1{(n_i-k_{i-1})!(n_i-k_{i}-\nu_{i+1}+\nu_t)!}.$$
Then
$$a_{r,\nu}(n)= \sum_{k\in\Z_+^{r-1}} q_{r,\nu}(n,k) a_{r-1,\mu}(k),$$
where
$$\mu=\left( \nu_1+\nu_t/r,\ldots,\nu_{t-1}+\nu_t/r,\nu_{t+1}+\nu_t/r,\ldots,\nu_{r+1}+\nu_t/r\right).$$
Changing variables to
$$n'_i=n_i+\nu_1+\cdots+\nu_i,\quad i=1,\ldots,r+1,$$
$$k'_i=k_i+\mu_1+\cdots+\mu_i,\quad i=1,\ldots,r,$$
we can write $q_{r,\nu}(n,k)=\tilde q_{r,\nu_t}(n',k')$, where
$$\tilde q_{r,\theta}(n',k')=\prod_{i=1}^r \frac1{(n'_i-k'_i+j\theta/r)!(n'_i-k'_{i-1}-(r-i+1)\theta/r)!}.$$
In this notation,
$$\tilde a_{r,\nu}(n') = \sum_{k'} \tilde q_{r,\nu_t}(n',k') \tilde a_{r-1,\mu}(k'),$$
where the sum is over $k'$ such that $k\in\Z_+^{r-1}$.
Since $t$ is arbitrary, the claim follows by induction over $r$.
\end{proof}
\begin{cor}
If $N$ is a Markov chain in $\Z_+^{r,\alpha}$ with generator $L^{r,\alpha}$
then the law of the Markov chain $N'$, which has state space $S^{r,\nu}$, 
is invariant under permutations of the parameters $\nu_1,\ldots,\nu_{r+1}$.
\end{cor}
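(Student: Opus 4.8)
The plan is to realise $N'$ as a deterministic translate of $N$, read off its generator explicitly, and then observe that this generator is manifestly invariant under permutations of $\nu_1,\ldots,\nu_{r+1}$ once the Proposition is invoked.

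First I would set $v=(\nu_1,\nu_1+\nu_2,\ldots,\nu_1+\cdots+\nu_r)$, so that $N'(t)=N(t)+v$ is just a fixed shift of the $\Z_+^{r,\alpha}$-valued chain $N$ onto $S^{r,\nu}=(\Z_+^{r,\alpha})'$. A deterministic translation of a Markov chain is again a Markov chain, with generator obtained by conjugating $L^{r,\alpha}$ by the shift: for $g$ on $S^{r,\nu}$ and $n'\in S^{r,\nu}$, writing $n=n'-v\in\Z_+^{r,\alpha}$,
$$(\mathcal L g)(n')=\sum_{i=1}^r \frac{a_{r,\nu}(n-e_i)}{a_{r,\nu}(n)}\bigl(g(n'-e_i)-g(n')\bigr).$$
Here I would remark that the backward-difference boundary convention in $D_{n_i}$ produces no ambiguity: the jump rate into a forbidden state vanishes because $a_{r,\nu}$ (equivalently $w_r$) is zero outside $\Z_+^{r,\alpha}$, and $n-e_i\in\Z_+^{r,\alpha}$ precisely when $n'-e_i\in S^{r,\nu}$, since the constraints defining these sets are componentwise and match under the shift.

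Next, using the defining relation $\tilde a_{r,\nu}(n')=a_{r,\nu}(n)$ together with the fact that subtracting $e_i$ commutes with the shift by $v$, the rates rewrite as $\tilde a_{r,\nu}(n'-e_i)/\tilde a_{r,\nu}(n')$, so that
$$(\mathcal L g)(n')=\sum_{i=1}^r \frac{\tilde a_{r,\nu}(n'-e_i)}{\tilde a_{r,\nu}(n')}\bigl(g(n'-e_i)-g(n')\bigr).$$
Thus $\mathcal L=\mathcal L^{r,\nu}$ depends on $\nu$ only through the state space $S^{r,\nu}$ and through the values of the function $\tilde a_{r,\nu}$ on it.

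Finally I would invoke the Proposition: $S^{r,\nu}$ is invariant under permutations of $\nu_1,\ldots,\nu_{r+1}$ (as already noted), and for each fixed $n'\in S^{r,\nu}$ so is $\tilde a_{r,\nu}(n')$. Hence $\mathcal L^{r,\nu}$ is literally the same operator for any reordering of the parameters. Starting from a fixed $n'\in S^{r,\nu}$, the chain can only decrease its coordinates and so visits a finite set of states; its law is therefore the unique solution of the forward equations for $\mathcal L^{r,\nu}$, and since that generator is permutation-invariant, so is the law. There is no real obstacle here — the Proposition does all the work — and the only points needing (minor) care are the boundary bookkeeping in the second paragraph and the finiteness argument just used, both of which are routine and of exactly the same type already noted after the proof of Theorem~\ref{mf-nu}.
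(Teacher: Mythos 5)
Your proof is correct and is essentially the argument the paper intends (the Corollary is stated without proof immediately after the Proposition): you conjugate $L^{r,\alpha}$ by the shift $n\mapsto n'=n+v$, rewrite the jump rates as $\tilde a_{r,\nu}(n'-e_i)/\tilde a_{r,\nu}(n')$, and then note that both the state space $S^{r,\nu}$ and the function $\tilde a_{r,\nu}$ are permutation-invariant by the Proposition, so the generator — and hence the law — is too. Your remarks on the boundary bookkeeping (the rates vanish exactly when $n'-e_i$ leaves $S^{r,\nu}$, since $a_{r,\nu}$ vanishes off $\Z_+^{r,\alpha}$) and on finiteness of the reachable state set are the right points to flag and are handled correctly.
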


\section{Transition probabilities and hitting times}\label{tp}

Let $(\alpha_1,\ldots,\alpha_r)\in\Z_+^r$ and consider the Markov chain $(X(t),\ t\ge 0)$ on $\Z_+^r$
with generator $L^{r}$.  Denote by $\P_n$ the law of this process started from $n\in\Z_+^r$ and $\E_n$
the corresponding expectation.  By Theorem~\ref{el-nu}, this law is also well defined for $n=+\infty$,
in this case we will write $\P=\P_{+\infty}$ and $\E=\E_{+\infty}$.  
For $n\in\Z_+^r$, let $$T_n=\inf\{ t\ge 0:\ X(t)=n\}$$ denote the first hitting time of $n$,
with the usual convention $\inf\emptyset=+\infty$.

The transition probabilities are given by
$$p^r_t(n,m) = \frac{a_{r}(m)}{a_{r}(n)} \tilde p^{r}_t(n,m) ,$$
where $\tilde p^{r}_t(n,m)$ is the heat kernel associated with $h^{r}$.
Let
$$g_{r}(n,m) = \int_0^\infty p^{r}_t(n,m) dt,$$
and note that, since the rate at which the chain leaves a state $m\in \Z_+^r$ is $P_{r}(m)$, 
$$\P_{n} (T_m<\infty) = P_{r}(m) g_{r}(n,m).$$

By Theorem~\ref{el-nu}, we may define
$$p^{r}_t(m) = \lim_{n\to\infty} p^{r}_t(n,m),\qquad h^{r}_m = \P(T_m<\infty)$$
and, for $m\in\Z_+^r\backslash\{0\}$, 
$$g_{r}(m) = \lim_{n\to\infty} g_{r}(n,m),\qquad a_{r}^*(m) = g_r(m) / a_{r}(m).$$
Note that $h^r_0=1$ and, for $n\in\Z_+^r\backslash\{0\}$,
\be\label{hr2} h^{r}_n = P_{r}(n) a_{r}(n) a_{r}^*(n).
\ee

\begin{prop}\label{fc}
For $r\ge 2$ and $k\in\Z_+^{r-1}\backslash\{0\}$,
$$a_{r-1}^*(k) = \sum_{n} q_r(n,k) a_r^*(n),$$
where the sum is over $n\in\Z_+^r$ such that $n_i\ge k_i$ for $i=1,\ldots,r$.
\end{prop}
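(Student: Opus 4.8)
The plan is to mimic the intertwining argument already used in the proof of Theorem~\ref{mf-nu}, but now applied to the \emph{time-integrated} quantities $g_r$ rather than to the heat kernels themselves. Recall that $a_{r-1}^*(k)=g_{r-1}(k)/a_{r-1}(k)$ with $g_{r-1}(k)=\lim_{l\to\infty}g_{r-1}(l,k)$, and similarly for $a_r^*$. So the identity to be proven is equivalent to
\[
g_{r-1}(k) = a_{r-1}(k) \sum_n q_r(n,k)\, \frac{g_r(n)}{a_r(n)},
\]
and since $w_r(\pi)=q_r(n,k)w_{r-1}(\pi|_{\delta_r})$ gave us $a_{r-1}(k)q_r(n,k)/a_r(n) = K^r_n(\{\pi: \pi|_{\delta_r}\in\Pi^{r-1}_k\})\cdot(\text{something})$, I expect the cleanest route is to first rewrite everything in terms of the unnormalised Green's functions $\tilde g_r(n,m)=\int_0^\infty \tilde p^r_t(n,m)\,dt$ of the heat kernel for $h^r$, where the factor $a_r(m)/a_r(n)$ disappears.

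First I would establish the identity at the level of heat kernels: using Proposition~\ref{iq}, $h^r\circ q_r = q_r\circ h^{r-1}$, together with the fact (from the proof of Theorem~\ref{mf-nu}) that the generator $L^r$ is intertwined with $L^{r-1}$ via the Markov kernel whose $(n,k)$-entry is $\Lambda_r(n,k) := a_{r-1}(k)q_r(n,k)/a_r(n)$. This intertwining $L^r\Lambda_r = \Lambda_r L^{r-1}$ (read off from Theorem~\ref{mf-nu}, since the conditional law of $\pi(t)|_{\delta_r}$ given $N(t)$ is $K^{r-1}_{N(t)}$ composed appropriately — more precisely the marginal on boundary values $k$ of $K^r_n$ is exactly $\Lambda_r(n,\cdot)$) yields $p^r_t \Lambda_r = \Lambda_r p^{r-1}_t$ as an identity of kernels. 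Integrating in $t$ from $0$ to $\infty$ gives $g_r \Lambda_r = \Lambda_r g_{r-1}$, i.e. $\sum_m g_r(n,m)\Lambda_r(m,k') = \sum_{k} \Lambda_r(n,k) g_{r-1}(k,k')$.

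Second, I would pass to the entrance law by letting $n\to\infty$. By Theorem~\ref{el-nu} the limits $g_r(m)=\lim_n g_r(n,m)$ exist; on the right-hand side $\sum_k \Lambda_r(n,k)g_{r-1}(k,k')$ the measures $\Lambda_r(n,\cdot)$ converge (this is the content of the law-of-large-numbers / large-deviations input advertised in the introduction and presumably made precise before Theorem~\ref{el}), so that $\lim_n \sum_k \Lambda_r(n,k)g_{r-1}(k,k') = g_{r-1}(k')$ — the key point being that the entrance law of $L^{r-1}$ from $+\infty$ is obtained precisely as this limit of $\Lambda_r(n,\cdot)$-averages of the laws started from $k$. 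One then sets $k'=k$, unwinds $\Lambda_r(m,k)=a_{r-1}(k)q_r(m,k)/a_r(m)$ and $g_r(m)=a_r(m)a_r^*(m)$, $g_{r-1}(k)=a_{r-1}(k)a_{r-1}^*(k)$, and the factors $a_{r-1}(k)$ and $a_r(m)$ cancel to leave exactly $a_{r-1}^*(k)=\sum_m q_r(m,k)a_r^*(m)$, with the sum over $m$ with $m_i\ge k_i$ (the support condition on $q_r$).

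The main obstacle will be the interchange of the $t$-integral with the $n\to\infty$ limit, and the justification that $\sum_k \Lambda_r(n,k)g_{r-1}(k,k') \to g_{r-1}(k')$ — i.e. that the entrance Green's function is genuinely the limit of averaged Green's functions. Since $g_{r-1}(k,k')$ can grow as $k\to\infty$ (the chain takes longer to reach $k'$ from far away), one needs the concentration of $\Lambda_r(n,\cdot)$ to dominate this growth; quantitatively this should follow from the large deviation principle for $K^r_n$ together with an a priori polynomial bound on $g_{r-1}(k,k')$ in $k$ (which one gets since $\P_k(T_{k'}<\infty)\le 1$ and $g_{r-1}(k,k')=\P_k(T_{k'}<\infty)/P_{r-1}(k')$ by the displayed formula for hitting probabilities, so in fact $g_{r-1}(k,k')$ is \emph{bounded} uniformly in $k$ — this removes the difficulty entirely and makes dominated convergence immediate). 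Given that observation, the interchange of limit and integral is also routine by dominated convergence using $p^r_t(n,m)\le 1$. So really the proof reduces to: (i) the kernel intertwining $g_r\Lambda_r=\Lambda_r g_{r-1}$ from Proposition~\ref{iq}; (ii) $n\to\infty$ via the boundedness of $g_{r-1}(\cdot,k')$ and convergence of $\Lambda_r(n,\cdot)$; (iii) cancellation of the $a$-factors.
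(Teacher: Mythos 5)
Your proposal is correct but takes a more circuitous route than the paper. The paper's proof is essentially a one-liner: Theorem~\ref{el-nu} gives directly, at the level of entrance-law transition functions, $p^{r-1}_t(k)=\sum_n a_r(n)^{-1}q_r(n,k)a_{r-1}(k)\,p^r_t(n)$ (because under the $G^{r}$ entrance law the conditional law of $\pi(t)$ given $N(t)=n$ is $K^r_n$, whose marginal on the inner boundary values is your kernel $\Lambda_r(n,\cdot)$, and those inner boundary values form the $L^{r-1}$ entrance-law chain by iterating the theorem); integrating over $t$ by Tonelli/monotone convergence then finishes. You instead first establish the Green's-kernel intertwining $g_r(n,\cdot)\,\Lambda_r=\Lambda_r\, g_{r-1}(\cdot,k')$ at finite starting points $n$ and then send $n\to\infty$, which requires the extra justification that $\lim_n\sum_l\Lambda_r(n,l)g_{r-1}(l,k')=g_{r-1}(k')$ and that the left-hand side converges termwise with adequate domination. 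Your observation that $g_{r-1}(k,k')=\P_k(T_{k'}<\infty)/P_{r-1}(k')\le 1/P_{r-1}(k')$ is bounded uniformly in $k$ is correct and is exactly what is needed to close the right-hand limit (combined with escape of $\Lambda_r(n,\cdot)$ to infinity, which Proposition~\ref{ldp} supplies). So your argument goes through, and it records the mildly stronger fact that the $L^r$ and $L^{r-1}$ Green's kernels are intertwined by $\Lambda_r$ from \emph{every} finite starting point, not only under the entrance law; but the paper, by passing to the entrance law \emph{before} integrating in $t$, avoids the finite-$n$ intertwining and the $n\to\infty$ interchange entirely, making the whole thing a consequence of Theorem~\ref{el-nu} plus one application of monotone convergence.
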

\begin{proof} By Theorem~\ref{el-nu}, 
$$p^{r-1}_t(k) =  \sum_{n} a_r(n)^{-1} q_r(n,k) a_{r-1}(k) p^r_t(n).$$
Integrating over $t>0$ gives, by monotone convergence, 
$$g_{r-1}(k) =  \sum_{n} a_r(n)^{-1} q_r(n,k) a_{r-1}(k) g_r(n).$$
The statement of the proposition follows.
\end{proof}

Throughout the remainder of this section 
we adopt the convention that empty products are 
equal to one and empty sums are equal to zero.

\subsection{The case $r=1$}

Writing $n=n_1$ and $a=\alpha_1$, $L^1=n(n+a)D_n$ and
$$a_1(n)=\frac1{n!(n+a)!},\qquad g_1(n)=\frac1{n(n+a)}, \qquad a_1^*(n)=\G(n)\G(n+a).$$ 
When $a=1$ this is essentially Kingman's coalescent~\cite{kingman}.
\begin{prop}\label{a1-kmg} 
For $k\ge n$, $\Re(s)> -1-a$ and $t\ge 0$,
$$\E_{k} e^{-s T_n} = \prod_{j=n+1}^{k} \frac{j(j+a)}{j(j+a)+s}$$
and
$$\tilde p_t(k,n)=\sum_{j=n}^{k} \prod_{l=n, l\ne j}^{k} \frac1{(l-j)(l+j+a)} e^{-j(j+a) t}.$$
\end{prop}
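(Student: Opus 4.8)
The plan is to treat the $r=1$ Markov chain as a pure birth-and-death process moving only downward: from state $k$ it jumps to $k-1$ at rate $\lambda_k := k(k+a)$, independently waiting an $\mathrm{Exp}(\lambda_k)$ time in each state. Consequently $T_n$, started from $k\ge n$, is a sum of independent exponentials, $T_n = \sum_{j=n+1}^{k}\tau_j$ with $\tau_j\sim\mathrm{Exp}(\lambda_j)$. The Laplace transform statement then follows immediately from $\E e^{-s\tau_j} = \lambda_j/(\lambda_j+s)$ and independence, valid whenever $\Re(s) > -\lambda_{n+1} = -(n+1)(n+1+a) \ge -(1+a)$ (the stated hypothesis $\Re(s)>-1-a$ covering the worst case $n=0$, though for general $n$ one gets the stronger range $\Re(s)>-\lambda_{n+1}$); alternatively one can verify $u(k):=\E_k e^{-sT_n}$ solves $L^1 u = s u$ with $u(n)=1$ and check the product formula satisfies the same recursion $\lambda_k(u(k-1)-u(k)) = s\,u(k)$.

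For the heat kernel $\tilde p_t(k,n)$, recall that $\tilde p_t$ is the kernel of $h^1 = L_n - n(n+a)$, i.e. $h^1 f(n) = f(n-1) - n(n+a)f(n)$ in the notation with the pure-shift operator $L_n$; this is the sub-Markovian semigroup $e^{t h^1}$, and $\tilde p_t(k,n) = a_1(k) p_t(k,n)/a_1(n)$ relates it to the Markov kernel $p_t$. First I would observe that $h^1$, restricted to the finite set $\{0,1,\dots,k\}$ reachable from $k$, is (in the basis $e_0,\dots,e_k$) an upper-triangular matrix with distinct diagonal entries $-j(j+a)$, $j=0,\dots,k$, so $e^{th^1}$ is computed by spectral decomposition: $\tilde p_t(k,n) = \sum_{j=n}^k c_j^{(k,n)} e^{-j(j+a)t}$ for constants $c_j^{(k,n)}$ to be identified. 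The claim is $c_j^{(k,n)} = \prod_{l=n,\,l\ne j}^{k} \frac1{(l-j)(l+j+a)}$.

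To pin down the constants, the cleanest route is partial fractions on the resolvent / Laplace transform in $t$. Since $\int_0^\infty e^{-st}\tilde p_t(k,n)\,dt$ is the $(k,n)$ entry of $(sI - h^1)^{-1}$, and $sI-h^1$ is lower-triangular-with-shift so its inverse has entries that telescope, one gets $\int_0^\infty e^{-st}\tilde p_t(k,n)\,dt = \prod_{l=n+1}^{k} \frac{1}{s + l(l+a)}$ — indeed this is consistent with the Laplace-transform part since $p_t(k,n)$ with $a_1$-ratios recovers $\E_k e^{-sT_n}$ only in the $n=0$ case, but the bare product $\prod_{l=n+1}^k (s+\lambda_l)^{-1}$ is exactly the Laplace transform of the convolution of exponentials, which is $\tilde p_t$ up to the $a_1$-normalization; I would do the bookkeeping carefully here. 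Expanding $\prod_{l=n+1}^{k}(s+\lambda_l)^{-1}$ by partial fractions in $s$, the coefficient of $(s+\lambda_j)^{-1}$ is $\prod_{l=n+1,\,l\ne j}^{k}(\lambda_l-\lambda_j)^{-1}$, and since $\lambda_l - \lambda_j = l(l+a) - j(j+a) = (l-j)(l+j+a)$, inverting the Laplace transform term by term gives precisely the asserted sum over $j$ from $n$ to $k$ (the index $j=n$ appearing because the product over $l$ ranges down to $l=n$ once one accounts for the normalization shifting the range from $\{n+1,\dots,k\}$ to $\{n,\dots,k\}\setminus\{j\}$).

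The main obstacle is purely one of bookkeeping rather than depth: keeping straight the relation $\tilde p_t(k,n) = a_1(k)\,p_t(k,n)/a_1(n) = \frac{n!(n+a)!}{k!(k+a)!}\,p_t(k,n)$ versus the probabilistic convolution-of-exponentials formula, so that the product ranges and the factor $(l-j)(l+j+a)$ in the denominator (note this vanishes nowhere for $l\ne j$, $l,j\ge 0$, $a\ge 1$, since $l+j+a>0$) come out with the stated index set $\{n,\dots,k\}$. I would therefore verify the final formula directly by checking it satisfies the forward equation $\frac{d}{dt}\tilde p_t(k,n) = \tilde p_t(k,n-1) - n(n+a)\tilde p_t(k,n)$ together with $\tilde p_0(k,n)=\delta_{kn}$: the eigenvalue structure makes the ODE check a finite linear-algebra identity among the coefficients $\prod_{l\ne j}\frac1{(l-j)(l+j+a)}$, and the initial condition reduces to the classical Lagrange-interpolation identity $\sum_{j=n}^k \prod_{l\ne j}\frac{1}{(l-j)(l+j+a)}=\delta_{kn}$, which one proves by viewing it as the sum of residues of $z\mapsto \prod_{l=n}^k \frac{1}{(l-z)(l+z+a)}$ or equivalently $\prod_{l=n}^{k}(\lambda_l - w)^{-1}$ evaluated via partial fractions at $w=0$.
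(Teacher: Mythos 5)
Your approach to the first formula is identical to the paper's: observe that under $\P_k$ the hitting time $T_n$ is a sum of independent $\mathrm{Exp}(j(j+a))$ waits, $j=n+1,\dots,k$, and take the Laplace transform.

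For the heat kernel you take a genuinely different route. The paper proceeds by writing down explicit left and right eigenfunctions $\varphi_j(n)=1/((n-j)!(n+a+j)!)$ and $\varphi_j^*(n)=(-1)^j[(1+j)_a+(j)_a](-j)_n(a+j)_n$ of $h^1$ and $~^*\!h^1$ with eigenvalue $-j(j+a)$, observes that the claimed formula equals $\sum_j\varphi_j(k)\varphi_j^*(n)e^{-j(j+a)t}$, so the forward and backward heat equations are automatic, and then checks $\tilde p_0(k,n)=\delta_{kn}$ via the residue identity \eqref{gi}. You instead derive the formula from the resolvent: compute $\int_0^\infty e^{-st}\tilde p_t(k,n)\,dt$ from the convolution-of-exponentials structure of the death chain, do partial fractions in $s$, and invert. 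Both work; yours has the advantage of \emph{producing} the formula rather than verifying a guessed one, and it makes the role of the factorization $l(l+a)-j(j+a)=(l-j)(l+j+a)$ transparent, while the paper's eigenfunction approach generalizes more readily (it is used again, e.g.\ in Proposition~\ref{prop-hp}). Two bookkeeping points you flagged but should fix in a clean write-up: (i) since $\tilde p_t(k,n)=\frac{a_1(k)}{a_1(n)}p_t(k,n)$ and $\int_0^\infty e^{-st}p_t(k,n)\,dt=\frac1{s+\lambda_n}\prod_{j=n+1}^k\frac{\lambda_j}{s+\lambda_j}$ (the extra $(s+\lambda_n)^{-1}$ accounting for the sojourn in state $n$), the $a_1$-ratio cancels exactly against $\prod_{j=n+1}^k\lambda_j$ and you get $\int_0^\infty e^{-st}\tilde p_t(k,n)\,dt=\prod_{l=n}^k(s+\lambda_l)^{-1}$, with the product starting at $l=n$, not $l=n+1$; this is exactly what makes the residue set $\{n,\dots,k\}$ come out right. (ii) The forward equation is $\partial_t\tilde p_t(k,n)=~^*\!h^1_n\tilde p_t(k,n)=\tilde p_t(k,n+1)-n(n+a)\tilde p_t(k,n)$ (the adjoint of $L_n$ is the forward shift $R_n$), not $\tilde p_t(k,n-1)-n(n+a)\tilde p_t(k,n)$ as written; your fallback verification would need this correction, though your primary Laplace-transform derivation does not rely on it.
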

\begin{proof}
The first claim is immediate from the fact that, under $\P_k$, $T_n$ is distributed
as a sum of independent exponential random variables with respective parameters
$j(j+a)$, $j=n+1,\ldots,k$.  

For the second claim, define
$$\varphi_j(n)=\frac1{(n-j)!(n+a+j)!},\qquad \varphi_j^*(n)=(-1)^j \left[ (1+j)_a+(j)_a\right] (-j)_n (a+j)_n,$$
and note that 
$$h^1 \varphi_j(n) = -j(j+a) \varphi_j(n),\qquad ~^*\! h^1 \varphi_j^*(n)=-j(j+a) \varphi_j^*(n),$$
where $h^1=L_n-n(n+a)$ and $~^*\! h^1=R_n-n(n+a)$ as defined previously.
The expression given for $\tilde p_t(k,n)$ in the statement of the proposition is equivalent to
$$\tilde p_t(k,n)=\sum_{j=n}^{k} \varphi_j(k) \varphi_j^*(n) e^{-j(j+a) t}.$$
This satisfies the required forward and backward equations
$$\partial_t \tilde p_t(k,n) = h^1_k \tilde p_t(k,n) = ~^*\! h^1_n \tilde p_t(k,n),$$
so it only remains to show that $f(k,n)=\delta_{kn}$, where
$$f(k,n)=\sum_{j=n}^{k} \prod_{l=n, l\ne j}^{k} \frac1{(l-j)(l+j+a)} .$$
Clearly $f(n,n)=1$.  Writing $(l-j)(l+j+a)=l(l+a)-j(j+a)$, the fact that
$f(k,n)=0$ for $k>n$ may be seen as a consequence of the more general identity
(see for example~\cite[Eq. 8.17]{noc13})
\be\label{gi}
\sum_{a=1}^N \prod_{b=1, b\ne a}^N \frac1{\l_a-\l_b} = 0.
\ee
\end{proof}

\begin{rem}
The functions $\tilde p_t(k,n)$ are also related to the classical Toda chain, 
see for example~\cite[Theorem 8.5]{noc13}.  In particular, they satisfy the Toda equations
$$\tilde p_t(k,n)\partial_t^2 \tilde p_t(k,n)-(\partial_t \tilde p_t(k,n))^2=\tilde p_t(k-1,n+1)\tilde p_t(k,n)-\tilde p_t(k-1,n)\tilde p_t(k,n+1).$$
\end{rem}

\subsection{The case $r=2$ and $\alpha\equiv 0$}

In this case, writing $(n,m)=(n_1,n_2)$,
$$a_{2}(n,m)=\frac{(n+m)!}{n!^3m!^3} ,\qquad 
L^2= \frac{n^3}{n+m} D_n+ \frac{m^3}{n+m} D_m .$$

We first note the following decomposition for the absorption time.
For the Markov chain with generator $L^2$,
let $\tau_{n,m}$ be the first hitting time of $(0,0)$ for the chain started at $(n,m)$.  
For the Markov chain with generator $n^2 D_n$,
let $\tau_n$ be the first hitting time of $0$ for the chain started at $n$.
\begin{prop}\label{dd}
The absorption time $\tau_{n,m}$ has the same law as $\tau_n + \tau'_m$,
where $\tau'_m$ is an independent copy of $\tau_m$.
This identity in law remains valid for $n=m=+\infty$.
\end{prop}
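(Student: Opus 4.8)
The plan is to identify the law of $\tau_{n,m}$ through its Laplace transform and to recognise it as a product of two one-dimensional Laplace transforms. Fix $s>0$ and, for the one-dimensional chain with generator $n^2D_n$ (the case $r=1$, $\alpha_1=0$), write $\psi_k=\E_ke^{-s\tau_k}$; by Proposition~\ref{a1-kmg} this equals $\psi_k=\prod_{j=1}^k j^2/(j^2+s)$, so $\psi_0=1$ and $k^2(\psi_{k-1}-\psi_k)=s\psi_k$ for every $k\ge 1$. (Conceptually the identity is transparent: $L^2$ is the generator $n^3D_n+m^3D_m$ of a pair of \emph{independent} pure-death chains, time-changed by the state-dependent factor $n+m$; the speed change turns the absorption time at $(0,0)$ into $\int_0^{\hat\sigma}(\hat N_v+\hat M_v)\,dv$, where $\hat\sigma$ is the absorption time of the pair, and this splits---each coordinate vanishing after its own absorption---into a sum of two independent pieces, the piece coming from the rate-$j^3$ chain being the sum over levels $j$ of $j\times\mathrm{Exp}(j^3)=\mathrm{Exp}(j^2)$, hence distributed as $\tau_n$. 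I will, however, give the more economical argument via Laplace transforms.)

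First I would verify that $F(n,m):=\psi_n\psi_m$ solves the backward equation for the absorption time of the $L^2$-chain, namely
$$ L^2F=sF \ \text{ on } \ \Z_+^2\setminus\{(0,0)\},\qquad F(0,0)=1 . $$
For $n,m\ge 1$ this is a one-line computation,
$$ (L^2F)(n,m)=\tfrac{n}{n+m}\,n^2(\psi_{n-1}-\psi_n)\,\psi_m+\tfrac{m}{n+m}\,\psi_n\,m^2(\psi_{m-1}-\psi_m)=\Big(\tfrac{n}{n+m}+\tfrac{m}{n+m}\Big)\,s\,\psi_n\psi_m=sF(n,m), $$
using $k^2(\psi_{k-1}-\psi_k)=s\psi_k$; on the line $m=0$ the coefficient $m^3/(n+m)$ of $D_m$ vanishes and the identity collapses to $n^2(\psi_{n-1}-\psi_n)=s\psi_n$ (and symmetrically for $n=0$), while $F(0,0)=\psi_0^2=1$. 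Since the $L^2$-chain started from $(n,m)$ never leaves the finite set $\{0,\dots,n\}\times\{0,\dots,m\}$ and is absorbed at $(0,0)$, the restriction of the above system to this set has a unique solution, so $\E_{(n,m)}e^{-s\tau_{n,m}}=F(n,m)=\psi_n\psi_m=\E_ne^{-s\tau_n}\cdot\E_me^{-s\tau_m}$. The right-hand side is the Laplace transform of $\tau_n+\tau'_m$ with $\tau'_m$ an independent copy of $\tau_m$, and as this holds for every $s>0$ we conclude $\tau_{n,m}\stackrel{d}{=}\tau_n+\tau'_m$.

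For $n=m=+\infty$ I would pass to the limit. By Theorem~\ref{el-nu}, $p^2_t\big((n,m),(0,0)\big)\to p^2_t(0,0)$ as $n,m\to\infty$, and since $(0,0)$ is absorbing this reads $\P_{(n,m)}(\tau_{n,m}\le t)\to\P(\tau_{\infty,\infty}\le t)$ for every $t$; applied to the $r=1$ chain the same result gives $\P_k(\tau_k\le t)\to\P(\tau_\infty\le t)$. Hence, by dominated convergence, $\E e^{-s\tau_{\infty,\infty}}=\lim_{n,m\to\infty}\psi_n\psi_m=\psi_\infty^2$, where $\psi_\infty=\prod_{j\ge 1}(1+s/j^2)^{-1}=\pi\sqrt{s}/\sinh(\pi\sqrt{s})=\E e^{-s\tau_\infty}$ is a positive number (the product converges since $\sum_j j^{-2}<\infty$); this is the Laplace transform of $\tau_\infty+\tau'_\infty$ for an independent copy $\tau'_\infty$ of $\tau_\infty$, which gives the identity in law in the limiting case. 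The only points that need a little care are the degeneration of $L^2$ at the boundary states $n=0$ and $m=0$ in the verification, and the appeal to Theorem~\ref{el-nu} to license the passage to $+\infty$; I would not expect a genuine obstacle here.
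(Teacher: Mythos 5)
Your argument is essentially the same as the paper's: both prove the factorisation $\E e^{-s\tau_{n,m}}=\E e^{-s\tau_n}\cdot\E e^{-s\tau_m}$ by showing that the product $G_s(n)G_s(m)$ solves $L^2H=sH$ on $\Z_+^2\setminus\{(0,0)\}$ with $H(0,0)=1$ (treating the boundary lines $n=0$, $m=0$ separately) and invoking uniqueness; the one-line computation using $\tfrac{n}{n+m}+\tfrac{m}{n+m}=1$ is identical. You go a bit further in two harmless respects: you make the one-dimensional Laplace transform $\psi_k=\prod_{j\le k}j^2/(j^2+s)$ explicit via Proposition~\ref{a1-kmg} (the paper only needs the functional equation $k^2D_k G_s=sG_s$), and you spell out the limiting case $n=m=+\infty$ via Theorem~\ref{el-nu} and dominated convergence, which the paper asserts without detail.
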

\begin{proof}
It suffices to show that $H_{s}(n,m)=G_{s}(n)  G_{s}(m) $, where
$$H_{s}(n,m)=\E e^{-s \tau_{n,m}} ,\qquad G_{s}(n) = \E e^{-s \tau_n}.$$
The function $H_{s}(n,m)$ is the unique solution to $L^2H=sH$ on $\Z_+^2\backslash\{(0,0)\}$ with $H(0,0)=1$,
and $G_s(n)$ satisfies $n^2 D_n G_s(n)=s G_s(n)$ for $n>0$.
Let $\tilde H_s(n,m)=G_{s}(n)  G_{s}(m)$.
Clearly $\tilde H_s(0,0)=1$.  
For $n>0$ and $m=0$ we have $L^2=m^2 D_m$, hence $L^2 \tilde H_s=s \tilde H_s$ on this set.
Similarly, $L^2 \tilde H_s=s \tilde H_s$ for $n=0$ and $m>0$.
For $n,m>0$, we have
\bea
L^2 \tilde H_s(n,m) &=& \frac{n^3}{n+m} D_n [G_{s}(n)  G_{s}(m)] + \frac{m^3}{n+m} D_m [G_{s}(n)  G_{s}(m)] \\
&=& s \frac{n}{n+m} [G_{s}(n)  G_{s}(m)] + s \frac{m}{n+m} [G_{s}(n)  G_{s}(m)] 
= s \tilde H_s(n,m),
\eea
hence $\tilde H_s=H_s$, as required.
\end{proof}

\begin{rem} The random variables $S_1=2\tau_{\infty}/\pi^2$ and 
$S_2=2(\tau_\infty + \tau'_\infty)/\pi^2$
have many interesting interpretations and applications~\cite{bpy,py}.
\end{rem}

\begin{rem} A key ingredient in the proof of Proposition~\ref{dd} is the fact that
$$A_2(n,m) = n!^2 m!^2 a_2(n,m) = {n+m \choose n}$$
satisfies the Pascal relation
$$A_2(n,m)=A_2(n-1,m)+A_2(n,m-1).$$
In general, for $\alpha\equiv 0$ and $r>2$, the normalised coefficients 
$$A_r(n)=\left(\prod_{i=1}^r n_i!^2\right) a_r(n)$$
do not satisfy
$$A_r(n)=\sum_{i=1}^r A(n-e_i),$$
so the above factorisation property does not extend in the obvious way to $r>2$.
\end{rem}

\begin{cor}\label{cauchy}
$$\sum_{(n,m)\ne (0,0)} a_2(n,m) a_2^*(n,m) = 2 \zeta(2),\quad \sum_{(n,m)\ne (0,0)} \frac1{n!^2m!^2} a_2^*(n,m) = \zeta(2).$$
\end{cor}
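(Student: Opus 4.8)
The plan is to recognise both series as sums of the Green's function of the $L^2$-chain $X(t)$ started from $+\infty$ (well defined by Theorem~\ref{el-nu}) and then to evaluate them probabilistically. Writing $g_2(n,m)=a_2(n,m)a_2^*(n,m)$, the definition $a_2^*=g_2/a_2$ together with $g_2(m)=\int_0^\infty p^2_t(m)\,dt$ gives $g_2(n,m)=\int_0^\infty\P_{+\infty}(X(t)=(n,m))\,dt$, and since $a_2(n,m)=(n+m)!/(n!^3m!^3)$ a short computation gives the two summand identities
$$a_2(n,m)\,a_2^*(n,m)=g_2(n,m),\qquad \frac{a_2^*(n,m)}{n!^2\,m!^2}=\frac{n!\,m!}{(n+m)!}\,g_2(n,m).$$
Because $0=(0,0)$ is absorbing, $\sum_{(n,m)\ne 0}\P_{+\infty}(X(t)=(n,m))=\P_{+\infty}(T_0>t)$, so by Tonelli the first series equals $\int_0^\infty\P_{+\infty}(T_0>t)\,dt=\E_{+\infty}[T_0]$. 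By Proposition~\ref{dd}, in the form stated for $n=m=+\infty$, $T_0$ has the law of $\tau_\infty+\tau'_\infty$, an independent sum of two copies of the absorption time $\tau_\infty$ of the $n^2D_n$-chain from $+\infty$; and by Proposition~\ref{a1-kmg} (with $a=0$) that absorption time is itself an independent sum $\sum_{j\ge1}E_j$ of exponential variables $E_j$ of rate $j^2$. Hence $\E_{+\infty}[T_0]=2\sum_{j\ge1}j^{-2}=2\zeta(2)$, which is the first identity.

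For the second identity the plan is to bring in the staircase chain $\pi(t)$ on $\Pi^2=\rpp(\delta_3)$ with generator $G^2$, started from the array with all entries $+\infty$ (Theorem~\ref{el-nu}); then $N(t)=(\pi_{12}(t),\pi_{21}(t))$ is the $L^2$-chain from $+\infty$, and conditionally on $\{N(s):s\le t\}$ the law of $\pi(t)$ is $K^2_{N(t)}$. The key point is that the unique array in $\Pi^2_{(n,m)}$ with $\pi_{11}=0$ has weight $w_2(\pi)=1/(n!^2m!^2)$, so
$$K^2_{(n,m)}(\pi_{11}=0)=\frac{1}{n!^2\,m!^2\,a_2(n,m)}=\frac{n!\,m!}{(n+m)!}\,.$$
On the other hand $\pi_{11}$ is the restriction of $\pi$ to the sub-shape $(1)\subset\delta_3$, hence is itself a Markov chain with generator $k^2D_k$ started from $+\infty$; being driven by $D_k$ alone it is non-increasing and absorbed at $0$. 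Thus $\zeta_{11}:=\inf\{t:\pi_{11}(t)=0\}$ satisfies, using the conditional description of $\pi(t)$ and $\sum_{(n,m)}\P(N(t)=(n,m))=1$,
$$\P(\zeta_{11}>t)=\P(\pi_{11}(t)\ne0)=\sum_{(n,m)}\P(N(t)=(n,m))\Big(1-\frac{n!\,m!}{(n+m)!}\Big).$$
Integrating in $t$ (the $(0,0)$ term drops and all terms are nonnegative) gives $\E[\zeta_{11}]=\sum_{(n,m)\ne(0,0)}g_2(n,m)\big(1-n!\,m!/(n+m)!\big)$, i.e. $\E[\zeta_{11}]$ equals the first series minus the second. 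Since $\zeta_{11}$ has the same law as the absorption time of the $n^2D_n$-chain from $+\infty$, $\E[\zeta_{11}]=\zeta(2)$; combined with the first identity this forces the second series to equal $2\zeta(2)-\zeta(2)=\zeta(2)$.

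The computations behind the two displayed summand identities, the evaluation $w_2(\pi)=1/(n!^2m!^2)$ at $\pi_{11}=0$, and the Tonelli interchanges (legitimate since every term is nonnegative and the totals are finite, being at most $2\zeta(2)$) are all routine. The one step requiring a word of care is the claim that the $\pi_{11}$-coordinate of the $G^2$-chain launched from $+\infty$ really is the $k^2D_k$-chain launched from $+\infty$ --- that is, that the restriction property of the $G^\l$-dynamics is compatible with passing to the entrance law. This is of the same nature as, and no harder than, the Markov-function arguments underlying Theorems~\ref{mf-nu} and~\ref{el-nu}, and presents no difficulty here because, conditionally on the boundary values, only finitely many states are ever visited.
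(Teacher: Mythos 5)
Your proof of the first identity coincides with the paper's: both recognize $\sum_{(n,m)\ne(0,0)}a_2(n,m)a_2^*(n,m)$ as $\E_{+\infty}[T_0]$ and evaluate it through Proposition~\ref{dd} and the Kingman-coalescent decomposition $\tau_\infty=\sum_{j\ge1}E_j$ with $E_j\sim\mathrm{Exp}(j^2)$.

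For the second identity you take a genuinely different, and arguably more transparent, route. The paper sums the identity of Proposition~\ref{fc}, namely $a_1^*(k)=\sum_{n,m\ge k}q_2((n,m),k)a_2^*(n,m)$, over $k\ge 1$, using $a_1(k)a_1^*(k)=1/k^2$ and $\sum_{k\ge1}q_2((n,m),k)a_1(k)=a_2(n,m)-1/(n!^2m!^2)$, and then subtracts from the first identity. You instead look at the interior coordinate $\pi_{11}$ of the $G^2$-chain from $+\infty$: by the elementary restriction property $\pi_{11}$ is the $k^2D_k$-chain started from $+\infty$, whose absorption time has mean $\zeta(2)$; and by Theorem~\ref{el-nu} the conditional probability that $\pi_{11}(t)=0$ given $N(t)=(n,m)$ is $K^2_{(n,m)}(\pi_{11}=0)=n!m!/(n+m)!$, so that $\E[\zeta_{11}]=\sum_{(n,m)\ne(0,0)}g_2(n,m)\bigl(1-n!m!/(n+m)!\bigr)$ equals the first series minus the second. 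Both arguments ultimately exploit the nested Markov structure; yours replaces the analytic identity of Proposition~\ref{fc} by a direct probabilistic reading of the second series as the expected time during which the corner $\pi_{11}$ is still positive, which bypasses Proposition~\ref{fc} entirely. Your handling of the one delicate point — that the restriction to the single box $(1)$ of the $G^2$-chain launched from $+\infty$ is the $k^2D_k$-chain launched from $+\infty$ — is also sound: it follows from the monotone coupling used to construct the entrance law in Theorem~\ref{el}, applied both to $G^{\delta_3}$ and to $G^{(1)}$, combined with uniqueness of the one-box entrance law.
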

\begin{proof}
The expected time spent at $(n,m)\ne(0,0)$ is given by $a_2(n,m) a_2^*(n,m)$,
so the first identity follows from Proposition~\ref{dd}.
By Proposition~\ref{fc},
\be\label{1k2}
\sum_{n,m\ge k} q_2((n,m),k) a_1(k) a_2^*(n,m) = \frac1{k^2}. 
\ee
Summing over $k\ge 1$ gives
$$\sum_{n,m\ge 1}\left[a_2(n,m) - q_2((n,m),0) \right] a_2^*(n,m) = \zeta(2)$$
or, equivalently,
$$\sum_{(n,m)\ne (0,0)} \left[a_2(n,m) - q_2((n,m),0) \right] a_2^*(n,m) = \zeta(2).$$
Combined with the first identity this gives the second.
\end{proof}

\begin{rem} Proposition~\ref{dd} may be extended to the case $(\alpha_1,\alpha_2)\in\Z^2$
with $\alpha_1+\alpha_2=0$ as follows.  Suppose
$a=\alpha_1\ge 0$ and let $E=\{(n,m)\in\Z_+^2:\ m\ge a\}$.  Then, for $(n,m)\in E$,
$$a_{2}(n,m)=\frac{(n+m)!}{n!^2(n+a)!m!^2(m-a)!},\quad L^2= \frac{n^2(n+a)}{n+m} D_n+ \frac{m^2(m-a)}{n+m} D_m .$$
Let $\tau_{n,m}$ be the first hitting time of $(0,a)$ for the Markov chain with generator $L^2$ started at $(n,m)\in E$.
Let $\tau_n$ be the first hitting time of $0$ for the Markov chain with generator $n(n+a)D_n$ started at $n\ge 0$,
and note that $\tau_{m-a}$ has the same law as the first hitting time of $a$ for the Markov chain with
generator $m(m-a)D_m$ started at $m\ge a$.  Then it holds that $\tau_{n,m}$ has the same law as 
$\tau_n+\tau_{m-a}'$, where $\tau_{m-a}'$ is an independent copy of $\tau_{m-a}$. This identity
remains valid for $n=m=+\infty$, and the first identity of Corollary~\ref{cauchy} extends to
$$\sum_{(n,m)\in E\backslash\{(0,a)\}} a_2(n,m) a_2^*(n,m) = \sum_{k=1}^\infty \frac2{k(k+a)}.$$
\end{rem}

\begin{prop}\label{prop-hp}
For $n,m\ge 1$,
$$\P_{(k,l)}(  T_{n,m}<\infty) = \frac{n^3+m^3}{n^3 m^3}  
\sum_{j=n}^k j^3 \prod_{a=n, a\ne j}^k \frac{a^3}{a^3-j^3} \prod_{b=m}^l \frac{b^3}{b^3+j^3}.$$
For $n\ge 1$,
$$\P_{(k,l)}(  T_{n,0}<\infty) = \sum_{j=n}^k 
 \prod_{a=n, a\ne j}^k \frac{a^3}{a^3-j^3} \prod_{b=1}^l \frac{b^3}{b^3+j^3}.$$
\end{prop}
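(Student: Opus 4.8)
Both identities are statements about hitting probabilities only, so I would first pass to the embedded discrete-time chain: from $(k,l)$ it steps to $(k-1,l)$ with probability $k^3/(k^3+l^3)$ and to $(k,l-1)$ with probability $l^3/(k^3+l^3)$. This walk is coordinatewise non-increasing, so from $(k,l)$ it only ever visits states in the finite box $\{0,\dots,k\}\times\{0,\dots,l\}$. Consequently, for fixed $n,m\ge 1$, the function $u(k,l)=\P_{(k,l)}(T_{n,m}<\infty)$ is characterised as the unique bounded function on $\Z_+^2$ which vanishes on $\{k<n\}\cup\{l<m\}$ (once a coordinate drops below its target value it can never return), equals $1$ at $(n,m)$, and satisfies $k^3\,u(k-1,l)+l^3\,u(k,l-1)=(k^3+l^3)\,u(k,l)$ on $\{k\ge n,\,l\ge m\}\setminus\{(n,m)\}$ — uniqueness holding because the walk, started in that region, surely leaves it in finitely many steps ($k+l$ strictly decreases). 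The plan is to check that the proposed right-hand side $F(k,l)$ has exactly these three properties.

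Writing $c_j^{(k)}=\prod_{a=n,\,a\neq j}^{k}\frac{a^3}{a^3-j^3}$ and $\Pi_j^{(l)}=\prod_{b=m}^{l}\frac{b^3}{b^3+j^3}$ (empty sums $=0$, empty products $=1$), so that $F(k,l)=\frac{n^3+m^3}{n^3m^3}\sum_{j=n}^{k}j^3 c_j^{(k)}\Pi_j^{(l)}$, the normalisation and boundary conditions are quick: $F(k,l)=0$ for $k<n$ is an empty sum; $F(n,m)=\frac{n^3+m^3}{n^3m^3}\cdot n^3\cdot\frac{m^3}{n^3+m^3}=1$; and for $k>n$ one has $\sum_{j=n}^{k}j^3 c_j^{(k)}=0$, which is the identity \eqref{gi} applied to the (distinct) values $\lambda_j=j^3$. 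The substantive point is the discrete harmonic equation. Using $c_j^{(k)}=c_j^{(k-1)}\,k^3/(k^3-j^3)$ for $j\le k-1$ and $\Pi_j^{(l-1)}=\Pi_j^{(l)}(l^3+j^3)/l^3$, the combination $(k^3+l^3)F(k,l)-l^3F(k,l-1)$ collapses to $\frac{n^3+m^3}{n^3m^3}\sum_{j=n}^{k}j^3c_j^{(k)}\Pi_j^{(l)}(k^3-j^3)$: the $j=k$ term vanishes, and for $j<k$ the factor $k^3-j^3$ cancels the pole in $c_j^{(k)}$, leaving precisely $k^3F(k-1,l)$. This telescoping/partial-fraction step is the one I expect to require care. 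It gives the required equation outright when $k>n$ and $l>m$; when $k=n$ only the $j=n$ term survives and it reduces to $l^3F(n,l-1)=(n^3+l^3)F(n,l)$, which is immediate from the definition of $\Pi_n^{(l)}$ together with the convention $F(n-1,l)=0$; and when $l=m$ it becomes $k^3F(k-1,m)=(k^3+m^3)F(k,m)$ once the term $l^3F(k,m-1)$ is killed — which is again \eqref{gi} (and $F(n-1,m)=0$ when $k=n$). Hence $F=u$.

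For the second identity I would run the same scheme with $(n,0)$ in place of $(n,m)$, $n\ge 1$. Here the walk started from $\{k\ge n\}$ fails to reach $(n,0)$ only if its first coordinate drops below $n$ while the second is still positive, and on the line $\{l=0\}$ it decreases its first coordinate deterministically and so always reaches $(n,0)$ from $(k,0)$ with $k\ge n$; thus $\P_{(\cdot)}(T_{n,0}<\infty)$ is the unique bounded function vanishing on $\{k<n\}$, equal to $1$ on $\{l=0,\,k\ge n\}$, and discrete harmonic for $L^2$ on $\{k\ge n,\,l\ge 1\}$. Setting $G(k,l)=\sum_{j=n}^{k}c_j^{(k)}\prod_{b=1}^{l}\frac{b^3}{b^3+j^3}$, the only genuinely new input is $G(k,0)=\sum_{j=n}^{k}c_j^{(k)}=1$, which is the value at $x=0$ of the Lagrange interpolant of the constant function $1$ through the nodes $\{a^3:\ n\le a\le k\}$ (note $c_j^{(k)}=\prod_{a\neq j}\frac{0-a^3}{\,j^3-a^3\,}$); the vanishing for $k<n$ is an empty sum, the row $l=0$ is trivially harmonic, and the harmonic equation on $\{k\ge n,\,l\ge 1\}$ follows from the identical telescoping computation. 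Uniqueness then yields $G(k,l)=\P_{(k,l)}(T_{n,0}<\infty)$, completing the proof.
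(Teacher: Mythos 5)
Your proof is correct, and it takes a genuinely different route from the paper's. The paper fixes the start $(k,l)$ and works with the \emph{target-variable} (forward) characterisation of the hitting probability: it introduces the adjoint operator $\,{}^*\!L^2$ together with the eigenfunctions $\psi_\nu$ of $L^2$ and $\psi^*_\nu$ of $\,{}^*\!L^2$ (read off from the closed form \eqref{bf} for $a_2$), specialises to $\nu=j(1,\omega,\omega^2)$ with $\omega$ a primitive cube root of unity so that the eigenvalue $-\tfrac12\sum_i\nu_i^2$ vanishes, and then shows that $F(n,m)=\sum_{j=n}^k f_j(k,l)f_j^*(n,m)$ satisfies $\,{}^*\!L^2F=0$ with the right boundary data, the combinatorial input again being \eqref{gi}. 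You instead fix the target $(n,m)$, pass to the embedded jump chain, and verify the \emph{start-variable} (backward) Dirichlet characterisation directly, by the telescoping partial-fraction computation $(k^3+l^3)F(k,l)-l^3F(k,l-1)=k^3F(k-1,l)$, with uniqueness furnished by the observation that $k+l$ strictly decreases at each step. Both proofs rely on \eqref{gi}, but your route is more elementary and self-contained, needing nothing beyond the finite-exit-time argument and algebra; the paper's route is less direct but deliberately exposes the eigenfunction structure tied to the cube roots of unity, which the author reuses in Corollary~\ref{hti} and the ensuing $q$-series evaluations. Your Lagrange-interpolation identity $\sum_j c_j^{(k)}=1$, used to handle the row $l=0$ in the second formula, is a clean substitute for the paper's normalisation of $f_j^*$ at that step. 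One small remark: you correctly only need $F(k,m-1)=0$ for $k>n$ (via \eqref{gi}); the formula does not vanish at $(n,m-1)$, but that state is never reached in one step from the interior of $\{k\ge n,\ l\ge m\}\setminus\{(n,m)\}$, so the argument is unaffected.
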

\begin{proof}
Let
$$~^*\! L^2 = B_n\circ \frac{n^3}{n+m}  + B_m \circ \frac{m^3}{n+m},$$
where $B_k$ denotes the forward difference operator $B_kf(k)=f(k+1)-f(k)$.

For $\nu\in\C^3$ with $\sum_i\nu_i=0$, the function
$$\psi_\nu(n,m)=\frac{n!^3 m!^3}{\prod_{i=1}^3 (n-\nu_i)! (m+\nu_i)!}$$
is an eigenfunction of $L^2$ with eigenvalue $-\sum_i\nu_i^2/2$, and
$$\psi^*_\nu(n,m)=\frac{n+m}{n!^3 m!^3} \prod_{i=1}^3 (-\nu_i)_n (\nu_i)_m $$
is an eigenfunction of $~^*\! L^2$ with the same eigenvalue.  
These claims, which may be anticipated from the formula~\eqref{bf}, are easily verified.

Let $\nu=(1,\omega , \omega^2 )$, where $\omega$ is the 
primitive cube root of unity $\omega=-1/2+i\sqrt{3}/2$.
Note that $\sum_i\nu_i=\sum_i \nu_i^2=0$, and
$$x^3+j^3=(x+j)(x+j\omega )(x+j\omega^2 ).$$
Define $f^*_0(n,m)=\delta_{n,0}$ and, for $j\ge 1$, 
$$f^*_j(n,m)=\psi^*_{j\nu}(n,m)= \frac{n+m}{n!^3 m!^3} 
\prod_{a=0}^{n-1} (a^3-j^3) \prod_{b=0}^{m-1} (b^3+j^3).$$
Define $f_0(k,l)=1$ and, for $j\ge 1$,
$$f_j(k,l)=k!^3 l!^3 \prod_{a=0, a\ne j}^k \frac1{a^3-j^3} \prod_{b=0}^l \frac1{b^3+j^3}. $$
Then $L^2f_j=~^*\! L^2 f_j^*=0$, for all $j\ge 0$.
We will show that
$$\P_{(k,l)}(  T_{n,m}<\infty) = \sum_{j=n}^k f_j(k,l) f_j^*(n,m),$$
which implies the statement of the proposition.
The function 
$$F(n,m)=\sum_{j=n}^k f_j(k,l) f_j^*(n,m)$$
satisfies $~^*\! L^2 F=0$ and, for $(k,l)\ne (0,0)$,
$$ F(k,l)=\frac1{k^2+l^2-kl}.$$
It therefore suffices to show that
$$\sum_{j=n}^{k} f_j(k,l) f^*_j(n,l+1)=0,\qquad 0\le n<k,$$
or equivalently
$$\sum_{j=n}^k \prod_{a=n, a\ne j}^k \frac1{a^3-j^3} = 0,\qquad 0\le n<k.$$
This follows from \eqref{gi}.
\end{proof}

Let $R_n(x)=\prod_{a=1}^n (a^3+x^3)$ with $R_0(x)=1$,
and set $h_{nm}=\P(T_{n,m}<\infty)$.
\begin{cor}\label{hti}
For $n,m\ge 1$, 
\be\label{hnm}
h_{nm} = \frac{n^3+m^3}{n!^3 m!^3}   \sum_{j=n}^\infty R_{n-1}(-j) R_{m-1}(j) \frac{3\pi^2j^5}{\sin^2(\pi \omega j)}
\ee
and, for $n\ge 1$,
$$h_{n0}=h_{0n}=\frac1{\G(n)^3} \sum_{j=n}^\infty R_{n-1}(-j) \frac{3\pi^2j^2}{\sin^2(\pi\omega j)}.$$
\end{cor}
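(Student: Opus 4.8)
The plan is to derive Corollary~\ref{hti} from Proposition~\ref{prop-hp} by letting $k,l\to\infty$. By Theorem~\ref{el-nu} and the relations recorded at the beginning of Section~\ref{tp} one has $h_{nm}=\lim_{k,l\to\infty}\P_{(k,l)}(T_{n,m}<\infty)$, so it suffices to pass to the limit on the right-hand side of the two formulas of Proposition~\ref{prop-hp} and identify the resulting series. By the symmetry of $L^2$ under $(n,m)\mapsto(m,n)$ we get $h_{0n}=h_{n0}$, and the case $m=0$ is handled by exactly the same computation applied to the second formula of Proposition~\ref{prop-hp} (where the $j^3$ and $(n^3+m^3)/(n^3m^3)$ prefactors are absent and the $b$-product is $\prod_{b\ge1}\frac{b^3}{b^3+j^3}$), so I concentrate on $n,m\ge1$.

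For fixed $j\ge n$, both $A_j(k):=\prod_{a=n,\,a\ne j}^{k}\frac{a^3}{a^3-j^3}$ and $B_j(l):=\prod_{b=m}^{l}\frac{b^3}{b^3+j^3}$ converge as $k,l\to\infty$, since $\sum a^{-3}<\infty$. To evaluate the limits I would use the factorizations $x^3-j^3=\prod_{i=0}^{2}(x-j\omega^i)$ and $x^3+j^3=\prod_{i=0}^{2}(x+j\omega^i)$ together with the elementary product formula $\prod_{a\ge a_0}\frac{(a+c_0)(a+c_1)(a+c_2)}{(a+d_0)(a+d_1)(a+d_2)}=\prod_{i}\frac{\Gamma(a_0+d_i)}{\Gamma(a_0+c_i)}$, valid when $\sum_i c_i=\sum_i d_i$; here convergence and applicability rest precisely on $j(1+\omega+\omega^2)=0$. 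This gives $B_j(\infty)=\Gamma(m)^{-3}\prod_{i}\Gamma(m+j\omega^i)$ at once; for $A_j(\infty)$ the term $a=j$ must first be removed from the $i=0$ subproduct by hand and the three rational subproducts in $a$ recombined (each grows like $N^{j\omega^i}$ as the cutoff $N\to\infty$, the powers cancelling only in the product), which yields $A_j(\infty)=\frac{3(-1)^{j-n}}{\Gamma(n)^3\,j\,(j-n)!}\,\Gamma(n-j\omega)\,\Gamma(n-j\omega^2)$, using $(1-\omega)(1-\omega^2)=3$.

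Next I would convert these $\Gamma$-values into the trigonometric form of \eqref{hnm}. Writing $\Gamma(n-j\omega^i)=\Gamma(1-j\omega^i)\prod_{a=1}^{n-1}(a-j\omega^i)$ and $\Gamma(m+j\omega^i)=\Gamma(1+j\omega^i)\prod_{a=1}^{m-1}(a+j\omega^i)$, the finite products in $a$ reassemble across $i=0,1,2$ into $R_{n-1}(-j)=\prod_{a=1}^{n-1}(a^3-j^3)$ and $R_{m-1}(j)$, once the $i=0$ factor $\prod_{a=1}^{n-1}(a-j)=(-1)^{n-1}(j-1)!/(j-n)!$ is accounted for. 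For the leftover prefactors: $\prod_i\Gamma(1+j\omega^i)$ telescopes, by the same product formula (equivalently by the Weierstrass product), to $g(j):=\prod_{a\ge1}\frac{a^3}{a^3+j^3}$; and $\Gamma(1-j\omega)\Gamma(1-j\omega^2)$ is evaluated by applying the reflection formula $\Gamma(1+z)\Gamma(1-z)=\pi z/\sin\pi z$ at $z=j\omega$ and $z=j\omega^2$ (with $\omega^3=1$), giving $\pi^2 j^2/[\Gamma(1+j\omega)\Gamma(1+j\omega^2)\sin(\pi j\omega)\sin(\pi j\omega^2)]$; since $j\in\Z$, a short computation splitting on the parity of $j$ shows $\sin(\pi j\omega)\sin(\pi j\omega^2)=(-1)^{j+1}\sin^2(\pi\omega j)$. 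Substituting all of this back into Proposition~\ref{prop-hp}, the factors $g(j)$, $(j-n)!$ and every sign cancel, and the $j$-th term collapses to $\frac{n^3+m^3}{n^3m^3}\cdot\frac{3\pi^2 j^5 R_{n-1}(-j)R_{m-1}(j)}{(n-1)!^3(m-1)!^3\sin^2(\pi\omega j)}$; using $n^3(n-1)!^3=n!^3$ and $m^3(m-1)!^3=m!^3$, this is exactly the $j$-th summand of \eqref{hnm}.

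The main obstacle is justifying the interchange of the limit $k,l\to\infty$ with the sum over $j$, whose length grows with $k$. I would argue by dominated convergence: $B_j(l)$ decreases in $l$ to $B_j(\infty)$, while for $k\ge j$ we have $A_j(k)=\left(\prod_{a=n}^{j-1}\frac{a^3}{a^3-j^3}\right)\prod_{a=j+1}^{k}\frac1{1-(j/a)^3}$, a quantity independent of $k$ times an increasing product of factors exceeding $1$; hence $|A_j(k)|\le|A_j(\infty)|$ and $|j^3 A_j(k)B_j(l)|\le j^3|A_j(\infty)|\frac{m^3}{m^3+j^3}$ uniformly in $k,l$. By the closed form found above this bound is $O(\mathrm{poly}(j)/|\sin^2(\pi\omega j)|)$, which decays exponentially because $|\sin^2(\pi\omega j)|\asymp e^{\pi\sqrt3\,j}$; thus the series converges absolutely and the limit may be taken termwise. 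The other point needing care is the bookkeeping of the various $(-1)^{j-n}$ and parity-dependent signs through the $\Gamma$-to-sine conversion, but, as noted, these cancel completely, so it is a nuisance rather than a real difficulty.
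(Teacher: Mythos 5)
Your proof is correct and follows the route the paper clearly intends: the corollary is stated immediately after Proposition~\ref{prop-hp} with no separate argument, and the derivation is precisely what you give---pass to the limit $k,l\to\infty$, evaluate the resulting infinite products via Gamma functions and the reflection formula, and control the termwise limit by dominated convergence using the exponential decay of $1/|\sin^2(\pi\omega j)|$. Your bookkeeping of the signs (through $\sin(\pi j\omega)\sin(\pi j\omega^2)=(-1)^{j+1}\sin^2(\pi\omega j)$ and the $(-1)^{j-n}$ from the $a<j$ factors) and your uniform bounds $|A_j(k)|\le|A_j(\infty)|$, $B_j(l)\le m^3/(m^3+j^3)$ check out.
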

The above corollary also provides formulas for $a_2^*(n,m)$, using
\be\label{ha}
h_{nm}= (n^2+m^2-nm) a_2(n,m) a_2^*(n,m).
\ee
We note that the hitting probabilities satisfy
\be\label{ht-rel}
h_{nm}=h_{mn},\qquad h_{n,0}+h_{n-1,1}+\cdots+h_{0,n}=1.
\ee

In the above formulas, noting that $R_{n-1}(-j)=0$ for $j=1,\ldots,n-1$, the summations
may be extended to $j\ge 1$.  Thus, if we define, for $k\ge 0$,
$$S_k = \sum_{j=1}^\infty \frac{6\pi^2j^{2+3k}}{\sin^2(\pi\omega j)},$$
then each hitting probability can be expressed as
a finite rational linear combination of these series.
By Proposition 2.24 of \cite{b} we have $S_0=1$ and,
by (a corrected version of) Corollary 2.23 in that paper, 
$S_k=0$ for all positive, even values of $k$.
The identity $S_{2k}=\delta_{k0}$
may also be inferred from Corollary~\ref{hti}
together with the relations \eqref{ht-rel}.  For example, by Corollary~\ref{hti}
we have $h_{10}=S_0/2$; together with $h_{10}=h_{01}$ and
$h_{01}+h_{10}=1$, this implies $S_0=1$.

The first few hitting probabilities are given by 
$$h_{00}=1,\quad h_{10}=h_{01}=1/2,\quad h_{11}=S_1=0.87987\ldots,\quad h_{20}=(1- S_1)/2,$$ 
$$h_{21}=9 S_1/16,\quad h_{30}=(8-9S_1)/16, \quad h_{22}=(S_1-S_3)/8.$$ 

\begin{rem}
We can write $S_k = 24 \pi^2 T_{2+3k}$, where
$$T_r = - \sum_{n=1}^\infty \frac{n^r q^n}{(1-q^n)^2},\qquad q=e^{2\pi i\omega}=-e^{-\pi\sqrt{3}}.$$
These are well known $q$-series and may be evaluated for even values of $r$
using formulas due to Ramanujan~\cite{haw}.  For example,
$$T_2= \frac1{24\pi^2},\quad
T_4= \frac{3 \Gamma(\tfrac13)^{18}}{40960\pi^{12}},\quad
T_6 = \frac{3\sqrt{3}  \Gamma(\tfrac13)^{18}}{28672\pi^{13}},\quad
T_8= 0,\quad  \ldots$$
\end{rem}

\begin{rem}
The summation formula \eqref{1k2} may be verified directly using \eqref{hnm}, \eqref{ha} and~\cite[Equation (34)]{pvj}.
\end{rem}

\subsection{Imaginary exponential functionals of Brownian motion}\label{iefb}

Let $\alpha\equiv 0$ and denote by $p_t(n,m)$ the transition kernel of the Markov chain with generator 
$$L^r=a_r(n)^{-1}\circ h^r \circ a_r(n).$$
Let $B=(B_1,\ldots,B_{r+1})$ be a standard Brownian motion in $\R^{r+1}$,
started at the origin, and set
$$Y_k(t)=e^{i(B_k(t)-B_{k+1}(t))},\qquad Z_k(t)=\int_0^t Y_k(s) ds,\qquad k=1,\ldots, r.$$
We will use the following notation.  For $0\ne y,z\in\C^r$ and $n\in\Z_+^r$,
$$y^{-n}=\prod_{k=1}^r y_k^{-n_k},\quad z^n=\prod_{k=1}^r z_k^{n_k},\quad n!=\prod_{k=1}^r n_i! .$$
\begin{prop}\label{ief1} For $n,m\in\Z_+^r$ with $n\ge m$,
$$p_t(n,m)=\frac{a_r(m)}{a_r(n)}  \frac1{(n-m)!} \E  \left[Y(t)^{-n} Z(t)^{n-m}\right].$$
In particular,
$$\P_n(T_0\le t) = p_t(n,0) =  \frac{\E Z(t)^{n}}{a_r(n) n!}.$$
\end{prop}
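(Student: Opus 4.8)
The plan is to establish the duality relation between the Markov chain with generator $L^r$ and the imaginary exponential functionals of Brownian motion, and then to extract transition probabilities from it. First I would set up the duality function. Starting from the fact that $h^r_n f(n,x) = \H^r_x f(n,x)$ for $f(n,x) = \prod_{i=1}^r e^{n_i(x_{i+1}-x_i)}$ (as stated in the introduction), I would substitute $x_k = i\xi_k$ to pass to the Markov duality $L^r_n H(n,\xi) = \I^r_\xi H(n,\xi)$, where $H(n,\xi) = a_r(n)^{-1}\prod_k e^{i n_k(\xi_{k+1}-\xi_k)}$ and $\I^r$ is the generator given in the introduction, which is (a complexification of) the generator of the Brownian functionals $Y_k(t) = e^{i(B_k(t)-B_{k+1}(t))}$. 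Then I would apply the standard Markov duality argument: for the process $X(t)$ with generator $L^r$ and the dual process driven by Brownian motion, the quantity $\E_n H(X(s), \xi_{t-s})$ is constant in $s$, yielding
\be
\E_n H(X(t),\xi_0) = \E_{\xi_t} H(n,\cdot)
\ee
in an appropriate sense; here the Brownian functional side must be read off carefully because $\I^r$ involves the diffusion on the $\xi$-variables together with the multiplicative/additive terms $e^{i(\xi_k-\xi_{k+1})}$. Concretely this should give
$$\sum_m p_t(n,m) H(m,\xi) = \E\!\left[ a_r(n)^{-1} \prod_k \big(e^{i\xi_{k+1}} Y_k(t) - \text{correction}\big)^{n_k}\right],$$
and the natural way to make this precise is to identify the right generating object: I expect the cleanest route is to check directly that $F(n,\xi,t) := a_r(n) \sum_m p_t(n,m) H(m,\xi)$ satisfies the PDE $\partial_t F = \I^r_\xi F$ with the right initial condition, and then recognize the solution as $\E$ of a product involving $Y_k(t)$ and $Z_k(t) = \int_0^t Y_k(s)\,ds$.

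The key algebraic identity driving everything is that, with $y_k = e^{i(\xi_k-\xi_{k+1})}$, one has the expansion
$$\prod_{k=1}^r (y_k + z_k)^{n_k} \big|_{\text{suitable sense}} = \sum_{m\le n} \binom{n}{m} y^m z^{n-m},$$
so that matching the $H(m,\xi) \propto a_r(m)^{-1} y^m$ terms on both sides picks out the coefficient $\frac{a_r(m)}{a_r(n)}\frac{1}{(n-m)!}\,\E[Y(t)^{-n} Z(t)^{n-m}]$ after one accounts for the factorials hidden in $H$ (note $H(n,\xi) = a_r(n)^{-1}\prod e^{in_k(\xi_{k+1}-\xi_k)}$ and the binomial coefficient $\binom{n}{m} = \frac{n!}{m!(n-m)!}$, with $n!$-type factors absorbed into the $a_r$'s via the normalization). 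I would therefore: (i) write down the Brownian-motion side as the solution of the backward equation for $\I^r$ started from $\prod_k e^{in_k\xi_{k+1}}$-type data, using Itô's formula to verify that $\E\big[\prod_k (e^{i\xi_{k+1}}Y_k(t))^{?}\ldots\big]$ does the job — more precisely, letting the increment structure of $B$ generate the $e^{i(\xi_k-\xi_{k+1})}$ drift terms and the $Z_k$ integrals generate the lower-order binomial terms; (ii) expand in powers of the $Z_k$'s; (iii) read off $p_t(n,m)$ by matching monomials in $y$.

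For the second (``in particular'') statement, setting $m = 0$ collapses the binomial sum to a single term: $p_t(n,0) = \frac{a_r(0)}{a_r(n)} \frac{1}{n!} \E[Y(t)^{-n} Z(t)^n] = \frac{1}{a_r(n) n!}\E[(Y(t)^{-1}Z(t))^n]$ since $a_r(0) = 1$ when $\alpha \equiv 0$ (from the formula $a_r(0) = \prod_{i\le j} 1/\alpha_{ij}!$). Then I would invoke the observation stated just before the proposition in the introduction — that $Y(t)^{-1}Z(t) = (Y_k(t)^{-1}\int_0^t Y_k(s)\,ds)_k$ has the same law as $Z(t) = (\int_0^t Y_k(s)\,ds)_k$, which follows from time-reversal of Brownian motion on $[0,t]$ (the reversed path $\tilde B(s) = B(t) - B(t-s)$ is again a Brownian motion, and $Y_k(t)^{-1}\int_0^t Y_k(s)\,ds = \int_0^t e^{i(\tilde B_k(u) - \tilde B_{k+1}(u))}\,du$ coordinatewise). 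This gives $p_t(n,0) = \E Z(t)^n / (a_r(n) n!)$, and $p_t(n,0) = \P_n(T_0 \le t)$ because $0$ is absorbing for $L^r$ (the rate $P_r(0) = 0$), so the chain is at $0$ at time $t$ iff it has been absorbed by then.

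The main obstacle I anticipate is making the duality rigorous on the Brownian side: the operator $\I^r$ is not the generator of an honest $\R^{r+1}$-valued diffusion but of a complex-valued functional, so the ``dual process'' is really the pair $(\xi$-diffusion, exponential functional$)$, and one must be careful that the relevant expectations converge (the $Y_k$ have modulus one and $Z_k$ grows at most linearly, so all moments are finite, which should suffice) and that the duality function $H$ — which is unbounded in $n$ — still permits the exchange-of-limits / optional-stopping step. As the proof of Theorem~\ref{mf-nu} notes, such technical issues are mild here because, given the starting point $n$, the chain $X(t)$ stays in the finite set $\{m \le n\}$; so for fixed $n$ the sum over $m$ is finite and the duality identity reduces to a finite system of linear ODEs in $t$, which one solves explicitly by the generating-function computation above. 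I would lean on that finiteness to sidestep the analytic subtleties.
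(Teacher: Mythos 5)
Your overall route matches the paper's: set up the duality $L^r_n H(n,x) = \I^r_x H(n,x)$ for $H(n,x) = a_r(n)^{-1} y^{-n}$, pass to a Feynman--Kac identity for the process started at $n$ (the paper invokes~\cite[Theorem 4.4.11]{ek}; your proposal to verify the same identity via It\^o's formula and the finiteness of the state space $\{m\le n\}$ is equivalent), expand the resulting expectation, and read off the coefficient of $y^{-m}$ by Cauchy's theorem; then obtain the second claim from the time-reversal observation that $Y(t)^{-1}Z(t)\stackrel{d}{=}Z(t)$ together with the fact that $0$ is absorbing.

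One concrete inaccuracy: the Feynman--Kac formula gives
$$\sum_{0\le m\le n} p_t(n,m)\,a_r(m)^{-1}\,y^{-m} \;=\; a_r(n)^{-1}\,y^{-n}\,\E\!\left[Y(t)^{-n}\exp\!\Big(\sum_{k=1}^r y_k Z_k(t)\Big)\right],$$
so the coefficient of $y^{-m}$ comes from the Taylor expansion of the \emph{exponential}, producing $\frac{1}{(n-m)!}\,\E[Y(t)^{-n}Z(t)^{n-m}]$. Your proposed ``key identity'' based on a binomial product $\prod_k(y_k+z_k)^{n_k}$, yielding $\binom{n}{m}=\frac{n!}{m!(n-m)!}$, is not what the Brownian side actually produces, and the extra $n!/m!$ does not get absorbed into the normalization as you hypothesize: the ratio $a_r(m)/a_r(n)$ already appears unmodified in the target formula, so following the binomial ansatz literally would give the wrong answer by a factor of $\prod_k n_k!/m_k!$. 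If you instead carry out your fallback plan --- show $F(n,x,t):=a_r(n)\sum_m p_t(n,m)H(m,x)$ solves $\partial_t F=\I^r_x F$ with initial data $y^{-n}$ and recognize the solution as the Feynman--Kac expectation above --- the exponential appears naturally and the factorials come out right. The remainder of your argument (time reversal on $[0,t]$, $a_r(0)=1$ when $\alpha\equiv 0$, and $p_t(n,0)=\P_n(T_0\le t)$ by absorption) is correct and agrees with the paper.
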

\begin{proof}
It is straightforward to show that $h^r_n F(n,x) = \I^r_x F(n,x)$, where
$$\I^r=\frac12\sum_{j=1}^{r+1} \frac{\partial^2}{\partial x_j^2}+\sum_{k=1}^r y_k,$$
$$F(n,x)=y^{-n}, \qquad y_k=e^{i(x_k-x_{k+1})},\quad k=1,\ldots, r .$$
Setting $H(n,x)=a_r(n)^{-1} F(n,x)$, this implies the Markov duality relation 
$$L^r_n H(n,x) = \I^r_x H(n,x).$$ 
It follows, for example using \cite[Theorem 4.4.11]{ek}, that,
if $N(t)$ is a Markov chain with generator $L^r$ started at $n$ then
$$\E H(N(t),x) = \E \left[ H(n,x+B(t)) \exp\left(\sum_{k=1}^r y_k Z_k(t)\right) \right] .$$
Equivalently,
$$\sum_{0\le m\le n} p_t(n,m) a_r(m)^{-1}  y^{-m} = a_r(n)^{-1} y^{-n}
\E\left[ Y(t)^{-n} \exp\left(\sum_{k=1}^r y_k Z_k(t)\right)\right] .$$
Since $|Y_k(t)|=1$ and $|Z_k(t)|\le t$ almost surely, for each $k$,
the first statement of the proposition follows, using bounded convergence 
and Cauchy's theorem.  Since the law of Brownian motion is invariant under time reversal,
$Y(t)^{-1}Z(t)$ has the same law as $Z(t)$, and the second claim follows.
\end{proof}
\begin{rem} The above proof also shows that, if $n,l\in\Z_+^r$ and $l\nleq n$, then
$$\E  \left[ Y(t)^{-n}  Z(t)^{l}\right] = 0.$$
\end{rem}
\begin{cor}\label{pic}
$$a_r(n)=\lim_{t\to\infty}  \frac{1}{n!} \E Z(t)^{n}.$$
\end{cor}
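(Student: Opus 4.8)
The statement to prove is Corollary~\ref{pic}: that $a_r(n)=\lim_{t\to\infty}\frac1{n!}\,\E\,Z(t)^n$. The natural route is to pass to the limit $t\to\infty$ in the identity already furnished by Proposition~\ref{ief1}, which reads $p_t(n,0)=\frac{1}{a_r(n)\,n!}\,\E\,Z(t)^n$. Rearranging this gives $\frac1{n!}\,\E\,Z(t)^n=a_r(n)\,p_t(n,0)=a_r(n)\,\P_n(T_0\le t)$, so the claim is exactly equivalent to the assertion that $\P_n(T_0\le t)\to 1$ as $t\to\infty$; equivalently, that the Markov chain $X(t)$ on $\Z_+^r$ with generator $L^r$, started from $n$, is absorbed at $0$ almost surely (i.e. $T_0<\infty$ a.s.).

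First I would record the reduction above explicitly, citing Proposition~\ref{ief1}. Then the entire content of the proof is the a.s.-finiteness of the absorption time $T_0$ under $\P_n$. To see this, note that the relevant part of the state space is finite: starting from $n$, the chain $X$ can only decrease each coordinate (the generator $L^r=\sum_i \frac{a_r(n-e_i)}{a_r(n)}D_{n_i}$ moves $n_i\mapsto n_i-1$), so it stays in $\{m\in\Z_+^r:\ m\le n\}$, a finite set. On this finite set, $0$ is the unique absorbing state, and from any non-zero state there is a positive-probability path of single-coordinate decrements leading to $0$ — this uses that the jump rates $a_r(n-e_i)/a_r(n)$ out of a non-zero state are strictly positive for at least one $i$ (indeed $P_r(m)=\sum_i m_i^2-\sum_i m_im_{i+1}+\sum_i\alpha_i m_i>0$ for $m\ne 0$, which is the total exit rate, as noted in Section~\ref{tp}), and that $a_r$ is strictly positive on $\Z_+^r$. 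A finite-state continuous-time Markov chain with a single absorbing state reachable from every other state is absorbed in finite time almost surely; hence $\P_n(T_0\le t)\to 1$.

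Combining, $\frac1{n!}\,\E\,Z(t)^n=a_r(n)\,\P_n(T_0\le t)\to a_r(n)$ as $t\to\infty$, which is the corollary. I would also remark that this limit statement is consistent with — and indeed an instance of — the representation $\Phi_r(y)=\lim_{t\to\infty}\E\exp(\sum_k y_k Z_k(t))$ promised in the introduction (Corollary~\ref{ief2}), obtained by expanding the exponential and matching coefficients of $y^n$, though the present corollary is the more elementary coefficientwise version and needs only the scalar reduction above.

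The main obstacle is really just making the absorption argument airtight: one must check that the chain cannot "escape" — but since every transition strictly decreases $\sum_i n_i$, the state space visited is automatically finite and this is immediate, so in fact there is no serious obstacle, only bookkeeping. One minor point worth stating carefully is the positivity of $a_r$ on $\Z_+^r$, which guarantees the rates $a_r(m-e_i)/a_r(m)$ are well-defined and that at least one is positive when $m\ne 0$; this was already observed in Section~\ref{mrss} just before the definition of $L^r$, so it may simply be invoked.
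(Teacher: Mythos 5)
Your proof is correct and follows the route the paper clearly intends: the paper gives Corollary~\ref{pic} with no explicit argument, presenting it as an immediate consequence of the second display of Proposition~\ref{ief1}, namely $\P_n(T_0\le t)=\frac{1}{a_r(n)\,n!}\E Z(t)^n$, together with the (unremarked) fact that $\P_n(T_0\le t)\to 1$. Your contribution is to make that last step explicit by observing that under $L^r$ the chain only ever decrements coordinates, hence stays in the finite set $\{m\le n\}$, that $a_r>0$ on $\Z_+^r$ (stated in Section~\ref{mrss}) so every nonzero coordinate can be decremented at a strictly positive rate, and therefore $0$ is reached almost surely. This is airtight and is exactly the bookkeeping the paper suppresses.
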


Corollary~\ref{pic} yields a probabilistic interpretation of the (index zero) fundamental 
$SL(r+1,\R)$ Whittaker function, as follows.  
This complements a similar representation for class one Whittaker functions in 
terms of exponential functionals of Brownian motion given in \cite[Proposition 5.1]{boc}.
When $\alpha\equiv 0$, the fundamental $SL(r+1,\R)$ Whittaker function
$\phi_r(x)$ defined by \eqref{phir} may be written as $\phi_r(x)=\Phi_r(y)$ where
$y_i=e^{x_i-x_{i+1}}$, $i=1,\ldots,r$, and
$$\Phi_r(y) = \sum_{n\in\Z_+^r} a_r(n) y^n.$$
This series is well defined for any $y\in\C^r$.
\begin{cor}\label{ief2} For any $y\in\C^r$,
$$\Phi_r(y) = \lim_{t\to\infty} \E \exp\left(\sum_{k=1}^r y_k Z_k(t)\right) .$$
\end{cor}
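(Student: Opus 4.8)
The plan is to deduce Corollary~\ref{ief2} directly from Corollary~\ref{pic} by summing the pointwise-in-$n$ identity against $y^n$, so the only real work is to justify the interchange of the limit $t\to\infty$ with the (infinite) sum over $n\in\Z_+^r$. First I would fix $y\in\C^r$ and write $e^{\sum_k y_k Z_k(t)}=\sum_{n\in\Z_+^r}\frac{1}{n!}y^nZ(t)^n$, which is a genuine (absolutely convergent, for each fixed $\omega$) power series since $|Z_k(t)|\le t$ almost surely; taking expectations and using Fubini (justified because $\sum_n\frac{1}{n!}|y|^n t^{|n|}=e^{t\sum_k|y_k|}<\infty$) gives $\E\exp(\sum_k y_kZ_k(t))=\sum_{n\in\Z_+^r}\frac{1}{n!}y^n\,\E Z(t)^n$. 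By Corollary~\ref{pic}, each term $\frac{1}{n!}\E Z(t)^n$ converges to $a_r(n)$ as $t\to\infty$, and $\sum_n a_r(n)y^n=\Phi_r(y)$ converges by hypothesis, so the statement reduces to a dominated-convergence argument over the index set $\Z_+^r$.

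The key estimate, which I expect to be the main obstacle, is a uniform-in-$t$ bound of the form $\frac{1}{n!}\bigl|\E Z(t)^n\bigr|\le C_n$ with $\sum_n C_n|y|^n<\infty$; the naive bound $\frac1{n!}|\E Z(t)^n|\le\frac{t^{|n|}}{n!}$ is useless because it blows up with $t$. To get around this I would aim to show that $\frac1{n!}\E Z(t)^n$ is actually \emph{monotone increasing} in $t$, or at least bounded above uniformly in $t$ by its limit $a_r(n)$ (up to a constant). A clean way to see monotonicity: differentiate in $t$ — using $\dot Z_k(t)=Y_k(t)$ and It\^o's formula one finds that $\E Z(t)^n$ satisfies a linear recursion relating $\partial_t\E Z(t)^n$ to lower-order moments, which is precisely the statement that $u_t(n):=\frac1{n!}\E Z(t)^n$ solves a forward equation driven by $L^r$ (this is implicit in the proof of Proposition~\ref{ief1}, where $\sum_{m\le n}p_t(n,m)a_r(m)^{-1}y^{-m}$ appears). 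Since $p_t(n,m)$ is a sub-probability kernel and $p_t(n,0)=\P_n(T_0\le t)$ is visibly increasing in $t$, one should be able to extract that $u_t(n)=a_r(n)\P_n(T_0\le t)$ — indeed the second display of Proposition~\ref{ief1} says exactly $\frac1{n!}\E Z(t)^n = a_r(n)\,\P_n(T_0\le t)=a_r(n)\,p_t(n,0)$. That identity is the crux: it immediately gives $0\le \frac1{n!}\E Z(t)^n\le a_r(n)$ for all $t$, and $\sum_n a_r(n)|y|^n<\infty$? — not quite, since $\Phi_r$ converges for this $y$ but we need it at $|y|$.

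So the remaining point is convergence of $\sum_n a_r(n)|y|^n$. Here I would invoke the fact that $\Phi_r(y)=\sum_n a_r(n)y^n$ is stated in the excerpt to be ``well defined for any $y\in\C^r$'', i.e. the series has infinite radius of convergence; since all $a_r(n)\ge0$, convergence of $\sum_n a_r(n)y^n$ for all complex $y$ forces convergence of $\sum_n a_r(n)|y|^n$ for all $y$. With this, $C_n:=a_r(n)$ works as a dominating sequence, and dominated convergence over $\Z_+^r$ yields
\[
\lim_{t\to\infty}\E\exp\Bigl(\sum_{k=1}^r y_kZ_k(t)\Bigr)
=\lim_{t\to\infty}\sum_{n\in\Z_+^r}\frac{1}{n!}y^n\,\E Z(t)^n
=\sum_{n\in\Z_+^r}a_r(n)y^n=\Phi_r(y),
\]
which is the claim. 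I would also remark that one can avoid even the power-series/Fubini step by noting $\exp(\sum_k y_kZ_k(t))=\sum_n\frac1{n!}y^nZ(t)^n$ with $|Z(t)^n|\le t^{|n|}$, and using that bounded convergence is anyway automatic once the per-$n$ bound $\frac1{n!}|\E Z(t)^n|\le a_r(n)|y|^{?}$ is in hand — but the argument via $\frac1{n!}\E Z(t)^n=a_r(n)p_t(n,0)$ is the shortest, since it reduces everything to the already-established Proposition~\ref{ief1} plus monotone/dominated convergence. The only genuinely delicate ingredient is confirming the uniform bound $\frac1{n!}|\E Z(t)^n|\le a_r(n)$, and that is handed to us for free by the second display of Proposition~\ref{ief1} together with $0\le p_t(n,0)\le 1$.
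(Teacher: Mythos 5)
Your proof is correct and matches the argument the paper leaves implicit: Corollary~\ref{ief2} is intended to follow from Corollary~\ref{pic} by expanding $\exp(\sum_k y_k Z_k(t))=\sum_{n\in\Z_+^r}\tfrac{1}{n!}y^nZ(t)^n$, applying Fubini (legitimate since $|Z_k(t)|\le t$), and interchanging the $t\to\infty$ limit with the sum over $n$. You correctly identify the one point that needs care — a uniform-in-$t$ domination — and correctly extract it from the second display of Proposition~\ref{ief1}, namely $\tfrac{1}{n!}\E Z(t)^n = a_r(n)\,p_t(n,0)\in[0,a_r(n)]$, together with the observation that $\sum_n a_r(n)|y|^n<\infty$ because the series $\Phi_r$ has non-negative coefficients and converges at every $y\in\C^r$; dominated convergence then closes the argument.
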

Recalling Propositions \ref{a1-kmg} and~\ref{dd}, we also deduce the following.
\begin{cor} If $r=2$ and $T$ is an independent exponentially distributed random variable with parameter $\l^2$,
independent of $B$, then
$$\E Z_1(T)^n = \E Z_2(T)^n = \frac{1}{ (1+i\l)_n (1-i\l)_n} ,$$
$$\E \left[ Z_1(T)^n Z_2(T)^m\right]= {n+m\choose n} \E Z_1(T)^n \E Z_2(T)^m.$$
\end{cor}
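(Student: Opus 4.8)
The plan is to Laplace-transform the identities of Proposition~\ref{ief1} in the time variable and substitute the one-dimensional hitting-time computations of Propositions~\ref{a1-kmg} and~\ref{dd}. Fix $r=2$ and $\alpha\equiv0$, and let $T_0$ denote the first hitting time of $(0,0)$ for the chain with generator $L^2$. The $r=2$ instance of the last display of Proposition~\ref{ief1} reads, for $n,m\in\Z_+$,
\[
\P_{(n,m)}(T_0\le t)=\frac{\E[Z_1(t)^nZ_2(t)^m]}{a_2(n,m)\,n!\,m!}.
\]
Multiplying both sides by the density $\l^2e^{-\l^2t}$ of $T$ and integrating over $t>0$, the left-hand side becomes $\P_{(n,m)}(T_0\le T)=\E_{(n,m)}e^{-\l^2T_0}$ (condition on $T_0$ and use the memorylessness of the exponential), while on the right one may interchange the time integral with the expectation, since $|Z_k(t)|\le t$ and $\E T^{n+m}<\infty$. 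This gives $\E[Z_1(T)^nZ_2(T)^m]=a_2(n,m)\,n!\,m!\,\E_{(n,m)}e^{-\l^2T_0}$.

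It then remains to evaluate $\E_{(n,m)}e^{-\l^2T_0}$. By Proposition~\ref{dd} (with $\alpha\equiv0$), $T_0$ has the law of $\tau_n+\tau'_m$ with $\tau_n$ and $\tau'_m$ independent, where $\tau_k$ is the absorption time at $0$ of the one-dimensional chain with generator $k^2D_k$ started at $k$; hence $\E_{(n,m)}e^{-\l^2T_0}=G(n)G(m)$ with $G(k)=\E\,e^{-\l^2\tau_k}$. Under that chain the successive holding times are independent exponentials with rates $j^2$, $j=1,\dots,k$, so (equivalently, by Proposition~\ref{a1-kmg} with $a=0$)
\[
G(k)=\prod_{j=1}^{k}\frac{j^2}{j^2+\l^2}=\prod_{j=1}^{k}\frac{j^2}{(j+i\l)(j-i\l)}=\frac{(k!)^2}{(1+i\l)_k(1-i\l)_k}.
\]

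Inserting $a_2(n,m)=(n+m)!/(n!^3m!^3)$ and the formula for $G$, the powers of $n!$ and $m!$ cancel, leaving
\[
\E[Z_1(T)^nZ_2(T)^m]=\frac{(n+m)!}{(1+i\l)_n(1-i\l)_n\,(1+i\l)_m(1-i\l)_m}.
\]
Specialising to $m=0$ (so $G(0)=1$) gives $\E Z_1(T)^n=n!\big/\big[(1+i\l)_n(1-i\l)_n\big]$, and the same for $Z_2(T)^n$ by the $(n,m)\leftrightarrow(m,n)$ symmetry of $a_2$; dividing the joint formula by the product $\E Z_1(T)^n\,\E Z_2(T)^m$ then leaves exactly $(n+m)!/(n!\,m!)=\binom{n+m}{n}$, which is the second identity. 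I do not expect a genuine obstacle: the only step needing a word of justification is the Fubini interchange above, and the rest is the bookkeeping of matching the factorials in $a_2(n,m)$ against the Pochhammer symbols from $G$, all of whose ingredients (Proposition~\ref{ief1} for the functional representation of the absorption probability, and Propositions~\ref{a1-kmg} and~\ref{dd} for the one-dimensional hitting times and the product decomposition of the absorption time) are already in hand. One could also bypass Proposition~\ref{dd}, obtaining $\E_{(n,m)}e^{-\l^2T_0}$ by solving $L^2H=\l^2H$ on $\Z_+^2\backslash\{(0,0)\}$ with $H(0,0)=1$ directly, but the factorisation makes that unnecessary.
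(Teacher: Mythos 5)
Your method is exactly the one the paper intends---the corollary is prefaced by ``Recalling Propositions \ref{a1-kmg} and~\ref{dd}, we also deduce the following,'' and you have correctly combined the $r=2$ case of Proposition~\ref{ief1} (Laplace-transformed against the exponential density of $T$) with the factorisation of the absorption time from Proposition~\ref{dd} and the one-dimensional Laplace transform from Proposition~\ref{a1-kmg}. The Fubini justification, the evaluation of $G(k)=\prod_{j=1}^k j^2/(j^2+\lambda^2)=(k!)^2/[(1+i\lambda)_k(1-i\lambda)_k]$, and the cancellation against $a_2(n,m)=(n+m)!/(n!^3m!^3)$ are all carried out correctly.

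However, you should explicitly flag that your result for the marginal moments, namely $\E Z_1(T)^n = n!/[(1+i\lambda)_n(1-i\lambda)_n]$, disagrees with the first display of the corollary by a factor of $n!$. Your formula is the correct one: a direct computation at $n=2$ gives $\E Z_1(T)^2 = 2\int_{0<s_1<s_2}\P(T>s_2)\,e^{-3s_1-s_2}\,ds_1\,ds_2 = 2/[(1+\lambda^2)(4+\lambda^2)]$, matching $n!/[(1+i\lambda)_n(1-i\lambda)_n]$ at $n=2$, while the corollary as printed gives $1/[(1+\lambda^2)(4+\lambda^2)]$. (At $n=1$ both agree since $1!=1$, which is why the typo is easy to miss.) The second displayed identity, $\E[Z_1(T)^nZ_2(T)^m]=\binom{n+m}{n}\E Z_1(T)^n\,\E Z_2(T)^m$, is insensitive to the $n!$ factor and your derivation confirms it. So: correct approach, correct calculation, but you silently replaced the stated first formula with your own without noting the discrepancy; you should call out the missing factor of $n!$ as a typo in the source.
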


We now consider the Markov chain on
$$E_r=\{n\in\Z_+^r:\ n_1\ge\cdots\ge n_r\ge 0\}$$
with generator
$$M^r = \sum_{i=1}^{r-1} n_i (n_i-n_{i+1}) D_{n_i} +n_r^2 D_{n_r} .$$
Recall that this describes the evolution of the first
row $n_i=\pi_{1,r-i+1}$, $i=1,\ldots,r$ of the Markov chain on $\Pi^r$ with generator
$G^r$ given by \eqref{gr} with $\alpha=0$.  Denote the transition kernel of
the Markov chain with generator $M^r$ by $q_t(n,m)$.
In the above notation, let us define
\be\label{ur}
U^r(t) = \int_{0<s_1<s_2<\cdots<s_r<t} Y_1(s_1)\ldots Y_r(s_r) ds_1\ldots ds_r.
\ee
For $p\in\Z_+$, let $p^r$ denote the element $(p,p,\ldots,p)\in E_r$.
\begin{prop}\label{ief3}
$$q_t(p^r,0) = \frac1{p!^r} \E U^r(t)^p.$$
\end{prop}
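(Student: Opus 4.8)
The plan is to derive Proposition~\ref{ief3} from a Markov duality relation between the generator $M^r$ and a suitable operator acting on the Brownian variables, exactly in the spirit of the proof of Proposition~\ref{ief1}. First I would look for a function $K(n,x)$ (a polynomial times an exponential in the $x$-variables) such that $M^r_n K(n,x) = \mathcal{A}^r_x K(n,x)$ for some differential operator $\mathcal{A}^r$ which is an infinitesimal generator (or close to one) for the process driven by $B$; the natural guess, given the structure of $M^r$ and the form of $K$ in the introduction, is $K(n,x)=\prod_{k=1}^r n_k!\, e^{i n_k(x_{k+1}-x_k)}$, or a normalized variant. One checks directly that applying $M^r$ in $n$ reproduces, term by term, the action of an operator in $x$ involving the Laplacian plus drift terms weighted by $\Re[e^{i(x_k-x_{k+1})}]$ and $\Im[e^{i(x_k-x_{k+1})}]$ — this is the operator $\mathcal{J}^r$ written just before Proposition~\ref{ief3}. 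The telescoping/shift structure of $M^r$ (the coefficient $n_i(n_i-n_{i+1})$ is designed so that the difference $D_{n_i}$ acting on $n_i! e^{in_i(\cdots)}$ produces cancellation against the next variable) is what makes this identity hold.

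Given the duality relation $M^r_n K(n,\xi) = \mathcal{J}^r_\xi K(n,\xi)$, I would invoke the standard consequence (as in \cite[Theorem 4.4.11]{ek}, used identically in the proof of Proposition~\ref{ief1}) that
\[
\E_{n}\, K(N(t),\xi) = \E\left[ K(n,\xi+B(t))\, \exp\Big(\sum_{k=1}^r (\text{appropriate weight}_k)\Big)\right],
\]
where the exponential collects the zeroth-order part of $\mathcal{J}^r$ integrated against the Brownian path. The key bookkeeping step is to expand $K$ in powers of the relevant exponentials and identify the coefficient extraction: the left-hand side, evaluated at $n=p^r$ and with the right choice of $\xi$ (or after a contour/Cauchy argument as in Proposition~\ref{ief1}), picks out $q_t(p^r,0)$ times $p!^r$, because $K(0,\xi)$ is constant and $0$ is the only state with all coordinates vanishing. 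The right-hand side, after expanding the nested exponential of the time-integrated $Y_k$'s, produces exactly the $p$-th power of the iterated integral $U^r(t)$ divided by nothing extra — the ordering $0<s_1<\cdots<s_r<t$ in \eqref{ur} arises precisely because the drift terms in $\mathcal{J}^r$ are "staircased" (the $k$-th exponential feeds only into the $(k+1)$-st coordinate), so iterating the Duhamel expansion forces the time-ordering.

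The main obstacle I anticipate is getting the duality relation $M^r_n K = \mathcal{J}^r_\xi K$ to hold on the nose with the correct real/imaginary splitting: $M^r$ has real coefficients but the natural complex exponential $e^{in(x_{k+1}-x_k)}$ forces one to separate $\xi = a+ib$ and track how $D_{n_i}$ acting on $e^{in_i(\xi_{k+1}-\xi_k)}$ distributes between the $\partial_{a_{k+1}}$ and $\partial_{b_{k+1}}$ drift terms. This is a finite but delicate computation — essentially verifying that $n_i(n_i-n_{i+1})$ and $n_r^2$ are exactly the coefficients that make the second-order terms match the Laplacian $\tfrac12\sum \partial_{a_j}^2$ and the first-order terms match the $\Re$ and $\Im$ drifts. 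Once that is in hand, the remaining steps — applying the Feynman–Kac/duality machinery, expanding, and extracting the coefficient of the correct monomial to read off $q_t(p^r,0)$ — are routine and parallel to the proof of Proposition~\ref{ief1}, with the boundedness $|Y_k(t)|=1$, $|Z_k(t)|\le t$ again justifying the interchange of limits and expectations.
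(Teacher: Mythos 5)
Your overall strategy (duality with $\J^r$ via the function $K$, then coefficient extraction by Cauchy's theorem) matches the paper's, but there is a genuine misstep in the middle. You write the duality consequence as
$$\E_n K(N(t),\xi)=\E\!\left[K(n,\xi+B(t))\,\exp\Big(\sum_k(\text{weight}_k)\Big)\right],$$
explaining that the exponential "collects the zeroth-order part of $\J^r$." This Feynman--Kac form is exactly what worked in Proposition~\ref{ief1}, where $\I^r$ is a Laplacian \emph{plus a multiplication operator} $\sum_k y_k$. But $\J^r$ has \emph{no} zeroth-order term: its non-Laplacian part consists of first-order drift terms in $\partial_{a_{k+1}}$ and $\partial_{b_{k+1}}$. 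A drift cannot be packaged as an exponential multiplicative functional of plain Brownian motion; the dual process is not $\xi+B(t)$ but a genuine diffusion $X$ with $dX_1=dB_1$ and $dX_{k+1}=dB_{k+1}+e^{i(X_k-X_{k+1})}dt$, and the duality identity is simply $\E K(N(t),\xi)=\E K(n,X(t))$ with no extra exponential. So the formula you wrote down would just collapse to $\E K(n,\xi+B(t))$, which is wrong.

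Your later remark that "iterating the Duhamel expansion forces the time-ordering" is closer to the right idea, but it does not follow from the formula you stated; it is an independent computation. The paper carries it out by solving the SDE iteratively: $e^{iX_{k+1}(t)}=e^{iB_{k+1}(t)}\bigl(e^{ix_{k+1}}+\int_0^t e^{i(X_k(s)-B_{k+1}(s))}ds\bigr)$, which unwinds to an explicit linear combination of the iterated integrals $U^k_j(t)$ with coefficients $e^{ix_j}$. That explicit representation is what lets Cauchy's theorem isolate the $m=0$ term on the dual side and produce $p!^r\,\E\bigl[[Y_1(t)\cdots Y_r(t)]^{-p}U^r(t)^p\bigr]$. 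You also omit the final time-reversal step needed to drop the prefactor $[Y_1(t)\cdots Y_r(t)]^{-p}$; without it the identity as stated is not yet proved. In short: the duality relation, the choice of $K$, and the coefficient-extraction idea are all correct and as in the paper, but the Feynman--Kac step is the wrong tool here and must be replaced by the SDE realization of the dual diffusion together with iterated integration and time reversal.
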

\begin{proof}
We first note the Markov duality relation $$M_n^r K(n,x) = \J_x^r K(n,x),$$ 
where, writing $x=a+ib$ with $a,b\in\R^{r+1}$,
$$\J^r = \frac12\sum_{j=1}^{r+1} \frac{\partial^2}{\partial a_j^2}
+\sum_{k=1}^r \Re y_k \frac{\partial}{\partial a_{k+1}} 
+\sum_{k=1}^r \Im y_k \frac{\partial}{\partial b_{k+1}} ,$$
$$K(n,x)= n!\, y^{-n}, \qquad y_k=e^{i(x_k-x_{k+1})},\quad k=1,\ldots, r .$$
The Markov process with generator $\J^r$, started at $x\in\C^{r+1}$, may be realised via the system of SDE's 
$dX_1=dB_1$ and, for $k=1,\ldots,r$, $$dX_{k+1}=dB_{k+1}+e^{i(X_k-X_{k+1})} dt.$$
By the above Markov duality relation, for example using \cite[Theorem 4.4.11]{ek}, 
if $N(t)$ is a Markov chain with generator $M^r$
started from $n\in E_r$, then
$$\E K(N(t),x) = \E K(n,X(t)).$$
The left hand side is given by
$$\E K(N(t),x) = \sum_{m\in E_r} m! y^{-m} q_t(n,m).$$
Let $n=p^r$. Then the right hand side is
$$\E K(n,X(t)) = p!^r \E e^{i p (X_{r+1}(t)-X_1(t))}.$$
To compute this, we first note that $X$ satisfies $e^{iX_1(t)}=e^{i(x_1+B_1(t))}$ and
$$e^{iX_{k+1}(t)}=e^{iB_{k+1}(t)}\left(e^{i x_{k+1}} + \int_0^t e^{i(X_{k}(s)-B_{k+1}(s))} ds\right),\qquad k=1,\ldots,r.$$
Iterating this identity, we obtain, for $k=1,\ldots,r$,
$$e^{iX_{k+1}(t)}=e^{iB_{k+1}(t)} \left(  \sum_{j=1}^{k} e^{i x_j} U^k_j(t) +e^{i x_{k+1}}  \right)$$
where
$$U^k_j(t) = \int_{0<s_j<\cdots<s_k<t} Y(s_j)\ldots Y(s_k) ds_j\ldots ds_k.$$
Note that $U^r(t)=U^r_1(t)$.
Thus,
$$\E e^{i p (X_{r+1}(t)-X_1(t))} = \E\left[ [Y_1(t)\ldots Y_r(t)]^{-p} \left( U^r(t) + \sum_{j=2}^r \frac1{y_1\ldots y_j} U^r_j(t) + \frac1{y_1\ldots y_r} \right)^p\right].$$
We can now apply bounded convergence and Cauchy's theorem to obtain
$$q_t(p^r,0)=p!^r \E [Y_1(t)\ldots Y_r(t)]^{-p} U^r(t)^p .$$
Since the law of Brownian motion is invariant under time reversal, this implies
$$q_t(p^r,0) = p!^r \E U^r(t)^p ,$$
as required.
\end{proof}

Combining this with Theorem~\ref{el-nu} yields the following.

\begin{cor}\label{ief4} Let $(N_1(t),\ldots,N_r(t)),\ t>0$ be a Markov chain with generator $L^r$ started from $+\infty$
and set $\zeta=\inf\{ t>0:\ N_1(t)=0\}$.  Then
$$\P(\zeta\le t) = \lim_{p\to\infty} \frac1{p!^r} \E U^r(t)^p.$$
\end{cor}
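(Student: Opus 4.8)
The plan is to combine Proposition~\ref{ief3} with the entrance law from Theorem~\ref{el-nu}, taking the limit $p\to\infty$ on both sides of the identity $q_t(p^r,0)=\tfrac1{p!^r}\E U^r(t)^p$. The right-hand side already has the desired form, so the entire content of the corollary is the identification of $\lim_{p\to\infty} q_t(p^r,0)$ with $\P(\zeta\le t)$.

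First I would set up the link between the Markov chain with generator $M^r$ (on $E_r$, describing the first row $n_i=\pi_{1,r-i+1}$) and the Markov chain with generator $L^r$. By Theorem~\ref{el-nu}, starting the $\Pi^r$-valued chain from the all-$+\infty$ array gives a well-defined entrance law under which the boundary values $N(t)=(\pi_{1,r}(t),\ldots,\pi_{r,1}(t))$ evolve with generator $L^r$; in particular $\pi_{1,r}(t)=N_1(t)$. Simultaneously, the restriction of this chain to the first row evolves with generator $M^r$ (this is the statement recalled just before Proposition~\ref{ief3}), and again the entrance law from $+\infty$ is well defined. The key point is that the first coordinate of $N(t)$, namely $N_1(t)=\pi_{1,r}(t)$, coincides with the last coordinate of the first-row chain. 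Hence $\zeta=\inf\{t>0:\ N_1(t)=0\}$ equals the first hitting time of $0$ by the last coordinate of the $M^r$-chain started from $+\infty$.

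Next I would express $q_t(p^r,0)$ probabilistically: $q_t(p^r,0)$ is the probability that the $M^r$-chain started from the state $p^r=(p,\ldots,p)$ has reached $0$ by time $t$, i.e.\ has been absorbed at the origin. Starting from $p^r$ with $p\to\infty$ is precisely the approach to the entrance law from $+\infty$ for the $M^r$-chain. Since the chain is monotone (started from larger initial states it stays larger, by a straightforward coupling using the monotone structure of the rates $n_i(n_i-n_{i+1})$ and $n_r^2$), the absorption time at $0$ started from $p^r$ increases to the absorption time started from $+\infty$, and $q_t(p^r,0)\downarrow \P_{+\infty}^{M^r}(\text{absorbed by time }t)$. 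Absorption at $0$ for the $M^r$-chain is equivalent to its last coordinate hitting $0$ (once $n_r=0$ the chain has reached the absorbing state $0$, since all earlier coordinates must already be $0$), so this limit equals $\P(\zeta\le t)$. Combining with Proposition~\ref{ief3} gives $\P(\zeta\le t)=\lim_{p\to\infty}\tfrac1{p!^r}\E U^r(t)^p$.

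The main obstacle is making the interchange of limit and the probabilistic identification fully rigorous: one must justify that $\lim_{p\to\infty} q_t(p^r,0)$ exists and equals the entrance-law absorption probability. This is exactly the kind of monotone-convergence argument that Theorem~\ref{el} (of which Theorem~\ref{el-nu} is a special case) is designed to supply, so I would invoke it directly; the paper's remark that ``the relevant part of the state space is finite'' given the initial law handles the attendant technicalities. A minor additional check is that $\P(\zeta\le t)$ is the same whether $\zeta$ is defined via the $L^r$-chain or via the last coordinate of the $M^r$-chain — but this is immediate from the coupling in Theorem~\ref{el-nu}, under which the two processes are built on the same probability space and $N_1(t)$ is literally a coordinate of the first-row process.
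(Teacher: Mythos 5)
Your overall strategy is exactly the one the paper intends (the paper's ``proof'' is a single sentence, ``Combining this with Theorem~\ref{el-nu} yields the following''): apply Proposition~\ref{ief3} and then identify $\lim_{p\to\infty} q_t(p^r,0)$ with $\P(\zeta\le t)$ via the entrance law and the monotone-coupling argument underlying Theorem~\ref{el}. That identification rests on the fact that $\pi_{1,r}$ is simultaneously the first coordinate $N_1$ of the boundary process (generator $L^r$) and a coordinate of the first-row process (generator $M^r$), so $\zeta$ coincides with the absorption time of the $M^r$-chain at the origin.

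There is one bookkeeping error you should fix. In the $E_r$ labelling $n_i=\pi_{1,r-i+1}$ one has $n_1\ge n_2\ge\cdots\ge n_r$, so the relevant coordinate is the \emph{first} one, $n_1=\pi_{1,r}=N_1$, not the last. Consequently the parenthetical ``once $n_r=0$ the chain has reached the absorbing state $0$, since all earlier coordinates must already be $0$'' is false as stated: $n_r$ is the \emph{smallest} coordinate, so $n_r=0$ tells you nothing about the others. The correct statement is that absorption at $0$ for the $M^r$-chain is equivalent to $n_1=0$, since $n_1$ dominates the rest; and $n_1=\pi_{1,r}=N_1$, which is precisely the coordinate appearing in the definition of $\zeta$. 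The same slip appears in the phrase ``the last coordinate of the $M^r$-chain'' both times it occurs. Once this is corrected, your argument is sound and follows the paper's intended route.
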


\begin{rem} The statements of Proposition \ref{ief1} and Corollaries~\ref{pic}
and~\ref{ief2} are easily extended to more general $\alpha\in\Z^r$.
For simplicity, suppose $\alpha\in\Z_+^r$.
Let $p_t(n,m)$ denote the transition kernel of the Markov chain on $\Z_+^{r}$
with generator $L^{r}$, as defined in Section~\ref{mrss} for $\alpha\in\Z_+^r$, and define
$$Y_k(t)=e^{i(B_k(t)-B_{k+1}(t))+\alpha_k t},\qquad Z_k(t)=\int_0^t Y_k(s) ds,\qquad k=1,\ldots, r.$$
Then, with $a_r(n)$ defined for $\alpha\in\Z_+^r$ as in Section 1.1, 
the {\em first} statement of Proposition~\ref{ief1} remains valid as stated.  
This follows from the generalised duality relation
$h^r_n F(n,x) = \I^r_x F(n,x)$, with $F(n,x)$ as before, $h^r$ defined by~\eqref{hr} and
$$\I^r=\frac12\sum_{j=1}^{r+1} \frac{\partial^2}{\partial x_j^2}-\sum_{j=1}^{r+1} i\nu_j \frac{\partial}{\partial x_j}+\sum_{k=1}^r e^{i(x_k-x_{k+1})},$$
where $\nu\in\R^{r+1}$ is such that $\sum_{i=1}^{r+1} \nu_i=0$ and $\alpha_i=\nu_i-\nu_{i+1}$, $1\le i\le r$.
For the second statement of Proposition~\ref{ief1}, let us denote $Z(t)=Z^{(\alpha)}(t)$.  Then, since
time reversal changes the sign of the drift $-i\nu$ and hence $\alpha$, the second claim becomes
$$\P_n(T_0\le t) = p_t(n,0) =  \frac{\E Z^{(-\alpha)}(t)^{n}}{a_r(n) n!}.$$
If $\alpha_i>0$ for each $i=1,\ldots,r$, then the random variable $Z^{(-\alpha)}(t)$ converges almost surely
as $t\to\infty$ to a limiting random variable $Z^{(-\alpha)}(\infty)$, and the statements of Corollaries~\ref{pic}
and~\ref{ief2} can be interpreted accordingly.  
For general $\alpha\in\Z^r$ the results also carry over but the precise statements need to
be modified as in Section~\ref{ia}.
\end{rem}

\begin{rem} More detailed results on imaginary exponential functionals of Brownian motion 
in the one dimensional case $r=1$ may be found in the paper~\cite{gdl}.
\end{rem}

\section{More general shapes}\label{rpp}

Let $\alpha_1,\alpha_2,\ldots$ be a sequence of non-negative integers and denote
$$\beta_{ij}=\alpha_i+\alpha_{i+1}+\cdots+\alpha_{i+j-1}.$$
Let $\Pi^{\l/\mu}$ denote the set of non-negative integer arrays $(\pi_{ij},\ (i,j)\in\l/\mu)$
which satisfy $\pi_{ij}\ge \pi_{i,j-1} \vee (\pi_{i-1,j}-\beta_{ij}),\quad (i,j)\in\l/\mu$, 
with the convention $\pi_{ij}=0$ for $(i,j)\notin\l/\mu$.  We will write $\Pi^\l=\Pi^{\l/\phi}$.
Note that if $\alpha \equiv 0$ then $\Pi^{\l/\mu}$ is the set of reverse plane
partitions of shape $\l/\mu$.

Recall that for a partition $\l$ we denote by $\l^\circ$ the 
set of $(i,j)\in\l$, such that $(i+1,j)\in\l$ and $(i,j+1)\in\l$.
Fix $\l$ and $\mu\subset \l^\circ$.
Denote by $\tilde\mu$ the extension of $\mu$ to include $(i,j)\in\l/\mu$
such that either $(i-1,j)\in\mu$ or $(i,j-1)\in\mu$.
For $\sigma\in\Pi^{\l/\mu}$, let $\Pi_\sigma^\l$ denote the set of $\pi\in\Pi^\l$ with $\pi |_{\l/\mu}=\sigma$.
For $\pi\in\Pi^\l$, let
$$W_{\l,\mu}(\pi)=\prod_{(i,j)\in\tilde\mu} {\pi_{ij}\choose \pi_{i,j-1}} {\pi_{ij}+\beta_{ij}\choose \pi_{i-1,j}}
 \prod_{(i,j)\in\mu} \frac{\pi_{ij}!}{(\pi_{ij}+\beta_{ij})!} .$$
For $\sigma\in\Pi^{\l/\mu}$, let $K^{\l,\mu}_\sigma$ denote the probability distribution on $\Pi_{\sigma}^\l$ 
defined by $K^{\l,\mu}_\sigma(\pi)=W_{\l,\mu}(\pi)/A_{\l,\mu}(\sigma)$, where
$$A_{\l,\mu}(\sigma)=\sum_{\pi\in\Pi_\sigma^\l} W_{\l,\mu}(\pi).$$
For $\sigma\in\Pi^{\l/\mu}$ and $(i,j)\in\l/\mu$, set
$$b_{ij}(\sigma)=(\sigma_{ij}-\sigma_{i,j-1})(\sigma_{ij}-\sigma_{i-1,j}+\beta_{ij}),$$
with the convention $\sigma_{ij}=0$ for $(i,j)\notin\l/\mu$.  
Let $C(\mu)$ denote the set of external corners of $\mu$, that is, the set of
$(i,j)\in\mu$ such that $(i,j+1)\notin\mu$ and $(i+1,j)\notin\mu$.  
Define
$$G^{\l,\mu} = \sum_{(i,j)\in\l/\mu} b_{ij}(\sigma) D_{\sigma_{ij}},\qquad G^\l=G^{\l,\phi}$$
$$H^{\l,\mu} = G^{\l,\mu} +V_{\l,\mu}(\sigma),\quad 
V_{\l,\mu}(\sigma)=\sum_{(i,j)\in C(\mu)}\sigma_{i+1,j}\sigma_{i,j+1} + \sum_{i=1}^{l(\mu)} \beta_{i+1,\mu_i} \sigma_{i,\mu_i+1}.$$

In proving the following theorem, we will also show that $H^{\l,\mu} A_{\l,\mu}=0$, 
so that the corresponding Doob transform 
$$L^{\l,\mu} = A_{\l,\mu}(\sigma)^{-1} \circ H^{\l,\mu} \circ A_{\l,\mu}(\sigma)$$
generates a Markov chain on $\Pi^{\l/\mu}$.

\begin{thm} \label{mf-rpp}
Let $\pi(t),\ t\ge 0$ be a Markov chain on $\Pi^\l$ with generator $G^\l$
and initial law $K^{\l,\mu}_\sigma$, for some $\mu\subset\l^\circ$ and $\sigma\in\Pi^{\l/\mu}$.
Then $\sigma(t)=\pi(t) |_{\l/\mu}$ is a Markov chain on $\Pi^{\l/\mu}$ with generator 
$L^{\l,\mu}$ and, for $t>0$, the conditional law of $\pi(t)$ given $\{\sigma(s),\ s\le t\}$
is $K^{\l,\mu}_{\sigma(t)}$.
\end{thm}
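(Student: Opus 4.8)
The plan is to follow the same route as the proof of Theorem~\ref{mf-nu}: reduce the statement to an operator intertwining and then invoke the theory of Markov functions. Write $\Phi\colon\Pi^\l\to\Pi^{\l/\mu}$ for the restriction map $\pi\mapsto\pi|_{\l/\mu}$, and let $\mathcal W=\mathcal W_{\l,\mu}$ be the unnormalised kernel acting on functions on $\Pi^\l$ by
$$(\mathcal W f)(\sigma)=\sum_{\pi\in\Pi^\l_\sigma}W_{\l,\mu}(\pi)\,f(\pi),\qquad \sigma\in\Pi^{\l/\mu},$$
so that $K^{\l,\mu}=A_{\l,\mu}^{-1}\mathcal W$ (multiplication of the output by $1/A_{\l,\mu}$). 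The heart of the matter is the intertwining
$$\mathcal W\circ G^\l=H^{\l,\mu}\circ\mathcal W.$$
Granting this, applying both sides to the constant function $1$ and using $G^\l 1=0$ and $(\mathcal W 1)(\sigma)=A_{\l,\mu}(\sigma)$ gives $H^{\l,\mu}A_{\l,\mu}=0$, so that $L^{\l,\mu}=A_{\l,\mu}^{-1}\circ H^{\l,\mu}\circ A_{\l,\mu}$ is a bona fide Markov generator; multiplying the intertwining through by $A_{\l,\mu}^{-1}$ then yields $K^{\l,\mu}\circ G^\l=L^{\l,\mu}\circ K^{\l,\mu}$. Since $K^{\l,\mu}_\sigma$ is supported on the fibre $\Phi^{-1}(\sigma)=\Pi^\l_\sigma$, the conclusion follows from the theory of Markov functions~\cite{kurtz,rp}, free of technical considerations since, given the initial law, the relevant part of the state space is finite, exactly as in the proof of Theorem~\ref{mf-nu}.

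It therefore remains to prove the intertwining $\mathcal W_{\l,\mu}\circ G^\l=H^{\l,\mu}\circ\mathcal W_{\l,\mu}$, which I would do by induction on $|\mu|$. When $\mu=\emptyset$ one has $\tilde\mu=\emptyset$, hence $W_{\l,\emptyset}\equiv 1$, $\mathcal W_{\l,\emptyset}=\mathrm{id}$, $V_{\l,\emptyset}\equiv 0$ and $H^{\l,\emptyset}=G^\l$, so the identity is trivial. For the inductive step, fix an external corner $c=(a,b)\in C(\mu)$ and put $\mu'=\mu\setminus\{c\}$; then $\mu'$ is again a partition and $\mu'\subset\mu\subset\l^\circ$. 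Comparing $W_{\l,\mu}$ with $W_{\l,\mu'}$, one checks that the ratio $W_{\l,\mu}(\pi)/W_{\l,\mu'}(\pi)$ depends on $\pi$ only through $\pi_c$ and a few fixed entries of $\pi$ on $\l/\mu$ adjacent to $c$; call this ratio $v_c(\pi_c)$. Consequently $\mathcal W_{\l,\mu}=\mathcal V_c\circ\mathcal W_{\l,\mu'}$, where $\mathcal V_c$ is the kernel from $\Pi^{\l/\mu'}$ to $\Pi^{\l/\mu}$ that sums out the single coordinate $\sigma_c$ against the weight $v_c$. By the induction hypothesis $\mathcal W_{\l,\mu'}\circ G^\l=H^{\l,\mu'}\circ\mathcal W_{\l,\mu'}$, so it is enough to establish the one-box intertwining
$$\mathcal V_c\circ H^{\l,\mu'}=H^{\l,\mu}\circ\mathcal V_c.$$

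This last identity is, after localising near $c$, precisely of the type proved in Proposition~\ref{iq}: I would verify it by equating, for each array $\pi$, the coefficients of $f(\pi)$ and of $f(\pi-e_{ij})$ on the two sides, where $e_{ij}$ denotes the array with a $1$ in position $(i,j)$. On the left, $H^{\l,\mu'}$ contributes the death rate $b_c$ of $\sigma_c$ together with the parts of $G^{\l,\mu'}$ and $V_{\l,\mu'}$ that involve $\sigma_c$; summing $\sigma_c$ out against $v_c$ — a discrete summation by parts in $\sigma_c$, in the spirit of the computation $S_{n,k}=T_{n,k}$ in Proposition~\ref{iq} — converts these into the remaining $G^{\l,\mu}$-terms together with the extra potential carried by $H^{\l,\mu}$, namely the term $\sigma_{a+1,b}\sigma_{a,b+1}$ attached to the new external corner $c\in C(\mu)$ and the term $\beta_{a+1,b}\sigma_{a,b+1}$ (recall $\mu_a=b$). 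The main obstacle is the combinatorial bookkeeping near $c$: the precise form of $v_c$, and hence of the verification, depends on the local shape of $\mu$ around $c$ — in particular on whether the boxes $(a-1,b+1)$ and $(a+1,b-1)$ lie in $\mu$, which governs whether $(a,b+1)$ and $(a+1,b)$ already carry binomial factors in $W_{\l,\mu'}$ — so a handful of cases must be treated. In each of them the cross-terms coming from the interaction of the eliminated variable $\sigma_c$ with its neighbours in $\l/\mu$ are exactly absorbed by $V_{\l,\mu}$, just as the terms $\sum_i n_in_{i+1}$ and $\sum_i\alpha_i n_i$ absorbed the analogous cross-terms in Proposition~\ref{iq}; once the cases are organised, the remainder is a routine finite check.
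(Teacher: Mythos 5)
Your high-level architecture agrees with the paper's: you reduce the theorem to the kernel intertwining $\mathcal W_{\l,\mu}\circ G^\l = H^{\l,\mu}\circ\mathcal W_{\l,\mu}$, observe that applying both sides to $\mathbf 1$ yields $H^{\l,\mu}A_{\l,\mu}=0$, and invoke Rogers--Pitman/Kurtz, exactly as in the paper. Where you diverge is in how the intertwining is to be proved. The paper does it in one pass: it writes $[H(\Lambda f)](\sigma)$ out, uses $[L_{\sigma_v}W(\pi)]\,b_v(\sigma)=W(\pi)\,b_v(\pi)$ to peel off the first-order part, and thereby reduces the whole intertwining to the single pointwise identity $HW(\pi)={}^*\!G^\mu W(\pi)$ (equation~\eqref{hg}), equivalently the scalar identity~\eqref{ki}
$$\sum_{v\in\l/\mu}[b_v(\pi)-b_v(\sigma)]+V_{\l,\mu}(\sigma)=\sum_{u\in\mu}[b'_u(\pi)-b_u(\pi)],$$
which it then verifies by a direct algebraic expansion. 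You instead propose to induct on $|\mu|$, peeling off one external corner $c$ at a time, factorising $\mathcal W_{\l,\mu}=\mathcal V_c\circ\mathcal W_{\l,\mu'}$, and reducing to a one-box intertwining $\mathcal V_c\circ H^{\l,\mu'}=H^{\l,\mu}\circ\mathcal V_c$. That decomposition is sound as far as it goes (the ratio $W_{\l,\mu}/W_{\l,\mu'}$ really does depend only on $\pi_c$ and on boundary entries of $\sigma$, because removing an external corner can only change $\tilde\mu$ at $c$, $(a,b+1)$ and $(a+1,b)$), and in principle the one-box identity is a cleaner object to verify than the global one.

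The gap is that you never actually verify the one-box intertwining, and this is not a cosmetic omission: it is where all of the content lies. Your claim that it is ``precisely of the type proved in Proposition~\ref{iq}'' overstates the analogy; Proposition~\ref{iq} is a specific staircase-to-staircase relation for $h^r$, whereas here the operators $H^{\l,\mu'}$ and $H^{\l,\mu}$ carry shape-dependent potentials $V_{\l,\mu'}$ and $V_{\l,\mu}$ whose supports (the external-corner sets $C(\mu')$, $C(\mu)$ and the boundary $\beta$-terms) both change when $c$ is deleted. Matching those potential terms against the cross-terms generated by the summation-by-parts in $\sigma_c$ is precisely the delicate part, and your discussion of it is an outline (``a handful of cases,'' ``routine finite check'') rather than a proof. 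Nothing you write is wrong, and the inductive route may well close, but as it stands the central algebraic identity---whatever local form it takes in your approach, and~\eqref{ki} in the paper's---has not been established.
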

\begin{proof}
For notational convenience, let us fix $\l$ and $\mu\subset\l^\circ$ and write 
$W=W_{\l,\mu}$, $H=H^{\l,\mu}$ and $V=V_{\l,\mu}$.
For functions $f$ on $\Pi^\l$, define
$$(\Lambda f)(\sigma) = \sum_{\pi\in\Pi_\sigma^\l} W(\pi) f(\pi).$$
We will show that 
\be\label{gsi}
H \circ \Lambda = \Lambda \circ G^\l,
\ee
which implies both $H A_{\l,\mu}=0$ and the statement of the theorem.

For $\pi\in\Pi_\sigma^\l$ and $v\in\l/\mu$,
$$[L_{\sigma_v} W(\pi)] b_v(\sigma) = W(\pi) b_v(\pi) .$$
Thus,
\bea
[H (\Lambda f)](\sigma) &=&\sum_{\pi\in\rpp_\sigma(\l)} \left( [H W(\pi)] f(\pi)+
\sum_{v\in\l/\mu} [L_{\sigma_v} W(\pi)] b_v(\sigma) D_{\sigma_v} f (\pi) \right)\\
&=& \sum_{\pi\in\rpp_\sigma(\l)} \left( [H W(\pi)] f(\pi)+
\sum_{v\in\l/\mu} W(\pi) b_v(\pi) D_{\pi_v} f (\pi) \right) .
\eea

To complete the proof, we will show that, for $\pi\in\Pi_\sigma^\l$,
\be\label{hg}
H W(\pi) = {}^*\! G^{\mu} W(\pi),
\ee
where
$${}^*\! G^{\mu} = \sum_{u\in\mu} B_{\pi_u} \circ  b_u(\pi),$$
and $B_n$ denotes the forward difference operator $B_nf(n)=f(n+1)-f(n)$.

For $u=(i,j)\in\mu$, define
$$b_u'(\pi) = (\pi_{i,j+1}-\pi_{ij})(\pi_{i+1,j}-\pi_{ij}+\beta_{i+1,j}),$$
and note that
$$B_{\pi_u}(  b_u(\pi) W(\pi) ) = [b_u'(\pi)-b_u(\pi)] W(\pi)  .$$
On the other hand, for $\pi\in\Pi_\sigma^\l$ and $v\in\l/\mu$,
$$b_v(\sigma) D_{\sigma_v} W(\pi) = [b_v(\pi)-b_v(\sigma)] W(\pi).$$
Thus \eqref{hg} reduces to the identity, for $\pi\in\Pi_\sigma^\l$,
\be\label{ki} 
\sum_{v\in\l/\mu} [b_v(\pi)-b_v(\sigma)] + V(\sigma) = \sum_{u\in \mu} [b_u'(\pi)-b_u(\pi)] .
\ee
For $u=(i,j)$, let us write $u\to v$ is $v$ is either $(i+1,j)$ or $(i,j+1)$.
For $(i,j)\in\mu$,
\bea
b_{ij}'(\pi)-b_{ij}(\pi) &=&  \pi_{i+1,j}\pi_{i,j+1}-\pi_{ij}\pi_{i+1,j}-\pi_{ij}\pi_{i,j+1}\\
&& +\pi_{ij}\pi_{i-1,j}+\pi_{ij}\pi_{i,j-1}-\pi_{i-1,j}\pi_{i,j-1}\\
&& -\beta_{ij}(\pi_{ij}-\pi_{i,j-1}) +\beta_{i+1,j}(\pi_{i,j+1}-\pi_{ij}).
\eea
Summing over $(i,j)\in\mu$ gives
$$ \sum_{u\in \mu} [b_u'(\pi)-b_u(\pi)]  = \sum_{\substack{(i,j)\in\mu \\ (i+1,j+1)\notin \mu}} \pi_{i+1,j}\pi_{i,j+1}
- \sum_{\substack{u\in\mu, v\in\l/\mu \\ u\to v}} \pi_u\pi_v - T(\pi) + U(\pi),$$
where
$$T(\pi)=\sum_{i=1}^{l(\mu)}\beta_{i,\mu_i+1} \pi_{i,\mu_i} \qquad U(\pi) = \sum_{i=1}^{l(\mu)} \beta_{i+1,\mu_i} \pi_{i,\mu_i+1} .$$
On the other hand,
\begin{align*}
&\sum_{v\in\l/\mu} [b_v(\pi)-b_v(\sigma)] =
\sum_{\substack{(i,j)\in\l/\mu\\ (i-1,j-1)\in\mu \backslash C(\mu)}} \pi_{i-1,j}\pi_{i,j-1}
- \sum_{\substack{u\in\mu, v\in\l/\mu \\ u\to v}} \pi_u\pi_v - T(\pi) \\
&= \sum_{\substack{(i,j)\in\mu\\ (i+1,j+1)\notin \mu}} \pi_{i+1,j}\pi_{i,j+1} -  \sum_{(i,j)\in C(\mu)}\sigma_{i+1,j}\sigma_{i,j+1}
- \sum_{\substack{u\in\mu, v\in\l/\mu \\ u\to v}} \pi_u\pi_v - T(\pi) \\
&= \sum_{\substack{(i,j)\in\mu\\ (i+1,j+1)\notin \mu}} \pi_{i+1,j}\pi_{i,j+1} 
- \sum_{\substack{u\in\mu, v\in\l/\mu \\ u\to v}} \pi_u\pi_v - T(\pi) + U(\pi) -V(\sigma) ,
\end{align*}
as required.
\end{proof}

\begin{rem}\label{bat}
The main content of Theorem \ref{mf-rpp} is when $\tilde\mu=\l$.
In this case, when $\alpha\equiv 0$, the numbers $A_{\l,\mu}(\sigma)$ are, 
up to a trivial factor, the coefficients of the hypergeometric series 
of the partial flag manifold corresponding to $\mu$ as defined 
in~\cite[Definition 5.1.5, see also Theorem 5.1.6]{bcks}.
\end{rem}

\begin{rem}
Theorem \ref{mf-rpp} may be generalised to allow general integer-valued $\alpha_i$
by modifying the state spaces $\Pi^\l$ and $\Pi^{\l/\mu}$ as in the type $A_r$ case,
see \S\ref{ia}.  The basic symmetry observed there also extends naturally
to the general setting.
\end{rem}

\begin{prop}\label{ldp}
Let $\pi$ be distributed according to $K^{\l,\mu}_\sigma$, where $\sigma\in\Pi^{\l/\mu}$.
Suppose that in the limit as $N\to\infty$, $\sigma/N\to a$, where $a=(a_{ij})\in\left(\R_{>0}\right)^{\l/\mu}$.
Then, in the same limit, $\pi/N\to x^a$ in probability, where $x^a=(x^a_{ij})\in \left(\R_{>0}\right)^\l$ is the unique solution 
to the equations 
\be\label{cpF}
(x_{i+1,j}-x_{ij})(x_{i,j+1}-x_{ij})=(x_{ij}-x_{i-1,j})(x_{ij}-x_{i,j-1}),\qquad (i,j)\in \mu
\ee
satisfying $x_{ij}=a_{ij}$ for $(i,j)\in\l/\mu$, $0\le x_{ij}\le x_{i+1,j}\wedge x_{i,j+1}$ for $(i,j)\in\mu$,
and with the convention $x_{i,0}=x_{0,j}=0$.  
\end{prop}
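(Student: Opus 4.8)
The plan is to prove this via a standard large‐deviation argument for the family of probability measures $K^{\l,\mu}_\sigma$, treating the "free" entries $\pi_{ij}$, $(i,j)\in\mu$, as the random variables and the boundary data $\sigma$ as fixed parameters. First I would rescale by $N$ and identify the rate function. For $\pi\in\Pi^\l_\sigma$ with $\pi/N\to x\in(\R_{>0})^\l$ (necessarily $x_{ij}=a_{ij}$ on $\l/\mu$), Stirling's formula applied to the binomial coefficients in $W_{\l,\mu}(\pi)$ — recalling that when $\tilde\mu=\l$ every factor is a genuine binomial coefficient and the $\beta_{ij}$ are fixed, hence negligible after rescaling — gives
$$\frac1N\log W_{\l,\mu}(\pi) = -I(x)+o(1),$$
where, writing $\eta(u)=u\log u$ (and $\eta(0)=0$) and using the entropy of a binomial,
$$I(x)=\sum_{(i,j)\in\tilde\mu}\Big[\eta(x_{ij})-\eta(x_{i,j-1})-\eta(x_{ij}-x_{i,j-1})+\eta(x_{ij})-\eta(x_{i-1,j})-\eta(x_{ij}-x_{i-1,j})\Big].$$
(When $\tilde\mu\ne\l$ the extra product $\prod_{(i,j)\in\mu}\pi_{ij}!/(\pi_{ij}+\beta_{ij})!$ contributes only a lower‐order term and can be absorbed.) A routine combinatorial estimate (the number of $\pi\in\Pi^\l_\sigma$ with $\pi/N$ in a small ball grows subexponentially, being polynomial in $N$) then yields the full Laplace principle: $\frac1N\log A_{\l,\mu}(\sigma)\to -\min_x I(x)$ over the compact polytope $\{x:\ x_{ij}=a_{ij}\text{ on }\l/\mu,\ 0\le x_{ij}\le x_{i+1,j}\wedge x_{i,j+1}\}$, and $\pi/N$ concentrates on the set of minimizers.

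The next step is to show the minimizer is unique and is exactly the solution $x^a$ of \eqref{cpF}. Here I would first check that $I$ is \emph{strictly convex} on the relevant polytope: $-\eta(u-v)$ as a function of $(u,v)$ is convex (its Hessian is $\tfrac1{u-v}\begin{pmatrix}-1&1\\1&-1\end{pmatrix}$, negative semidefinite — wait, that makes $-\eta$ concave; the correct sign is that $\eta(u)-\eta(v)-\eta(u-v)$ is convex in $(u,v)$, which one verifies directly from the Hessian). Granting convexity, strict convexity in the interior follows because the $\eta(x_{ij})$ terms with $(i,j)\in\mu$ bring in a positive‐definite diagonal contribution in all the free variables. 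Strict convexity plus compactness gives a unique minimizer, which must lie in the interior (the gradient of $I$ blows up as any $x_{ij}-x_{i,j-1}$ or $x_{ij}-x_{i-1,j}\to 0$, pushing away from the boundary faces; positivity of all boundary $a_{ij}$ is what makes this work). Finally I would compute $\partial I/\partial x_{ij}$ for $(i,j)\in\mu$ and set it to zero: the derivative of the four relevant entropy groups telescopes into
$$\log\frac{(x_{ij}-x_{i,j-1})(x_{ij}-x_{i-1,j})}{(x_{i+1,j}-x_{ij})(x_{i,j+1}-x_{ij})}=0,$$
which is precisely \eqref{cpF}. Existence and uniqueness of $x^a$ solving \eqref{cpF} with the stated constraints then also follow, since the critical‐point equations \emph{are} the equations of the unique minimizer.

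The main obstacle I anticipate is verifying the convexity of the rate function $I$ cleanly — the cross terms $-\eta(x_{ij}-x_{i,j-1})$ couple neighboring free variables and also couple free variables to the fixed boundary, so one has to be careful that the quadratic form is positive definite precisely on the tangent space of the constraint set (the directions in which the free $x_{ij}$, $(i,j)\in\mu$, vary). I would handle this by writing $I$ as a sum over edges of $-\eta(\text{difference})$ plus a sum over $\mu$ of $+\eta(x_{ij})$, noting each $-\eta$ of a positive difference contributes a rank‐one PSD form and the diagonal $\eta$‐terms over $\mu$ contribute a strictly PD diagonal in exactly the free coordinates, so the total is PD on the relevant subspace. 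A secondary technical point is justifying that boundary minimizers are excluded; this is where the hypothesis $a\in(\R_{>0})^{\l/\mu}$ and the resulting interiority of $x^a$ (all differences strictly positive, which one checks inductively from the boundary inward) are essential, and the gradient‐blowup argument makes it rigorous. Everything else — Stirling, the subexponential counting bound, and the standard passage from Laplace principle to concentration in probability — is routine.
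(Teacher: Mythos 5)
Your overall plan matches the paper's: rescale by $N$, apply Stirling to $W_{\l,\mu}$ to obtain an explicit rate function on a compact polytope, prove strict convexity in the interior, exclude boundary minimizers via gradient blow-up, and read off the critical-point equations \eqref{cpF}. The paper writes the rate function compactly as $F(x)=\sum_{u\in\mu}\sum_{v:\,u\to v}x_v\,h(x_u/x_v)$ with $h(p)=p\log p+(1-p)\log(1-p)$, which agrees with your entropy-difference expression after fixing signs (note you want $\frac1N\log W\to -F(x)$ with $F\ge 0$ on the polytope up to an additive constant, and $\eta(u)-\eta(v)-\eta(u-v)$ is \emph{concave}, not convex; the convex object is its negative $\eta(v)+\eta(u-v)-\eta(u)$).

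The genuine gap is in your strict-convexity argument. You decompose the rate function into ``rank-one PSD edge terms plus a strictly PD diagonal coming from $\eta(x_{ij})$, $(i,j)\in\mu$.'' That diagonal is \emph{not} strictly PD on the free coordinates: the decomposition also contains $-\eta(x_v)$ for each $v\in\mu$ that is the target of an edge from another box in $\mu$, and for an interior box of $\mu$ (one with both predecessors in $\mu$) the $+2/x_v$ and $-2/x_v$ diagonal contributions cancel exactly. So the ``PD diagonal'' you are leaning on simply isn't there on the interior of $\mu$, and your argument as stated would fail as soon as $\mu$ has an interior box. What actually works — and what the paper does — is to compute the full Hessian quadratic form and observe that it assembles into a sum over edges $u\to v$ (with $u\in\mu$, $v\in\l$) of nonnegative squares
$$\frac{1}{x_v-x_u}\Bigl(\sqrt{x_v/x_u}\,\xi_u-\sqrt{x_u/x_v}\,\xi_v\Bigr)^2,$$
each of rank one. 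This form vanishes iff $\xi_u/x_u=\xi_v/x_v$ along every edge, and since $\xi_v=0$ for $v\in\l/\mu$, the relation propagates inward from the boundary through the connected diagram to force $\xi\equiv 0$ on $\mu$. That is where strict positive-definiteness actually comes from, not from any diagonal term. Once you replace your diagonal argument with this kernel identification, the rest of your outline (uniqueness of the minimizer, boundary exclusion via gradient blow-up using $a\in(\R_{>0})^{\l/\mu}$, and the critical-point computation yielding \eqref{cpF}) is correct and matches the paper.
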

\begin{proof}
The proof is similar to that of~\cite[Theorem 10.2]{rietsch},
see also~\cite[Lemma 5.4]{abo22}.
Let $X_a$ be the set of $x=(x_{ij})\in \R_+^\l$ satisfying $x_{ij}=a_{ij}$ for $(i,j)\in\l/\mu$
and $x_{ij}\le x_{i+1,j}\wedge x_{i,j+1}$ for $(i,j)\in\mu$.
By Stirling's formula, for $b\in\Z_+$,
$$\lim_{N\to\infty} \frac1{N} \log {yN+b \choose xN} = - y\; h\left(\frac{x}{y}\right),$$
uniformly on any compact set $0\le x\le y\le K$, where
$$h(p)=p\log p+(1-p)\log(1-p).$$
Thus,
\be\label{F-lim}
\lim_{\pi/N\to x} \frac1{N} \log W_{\l,\mu}(\pi) = -F(x),
\ee
uniformly for $x\in X_a$, where
$$F(x) = \sum_{(i,j)\in\mu} \left[ x_{i+1,j} h\left(\frac{x_{ij}}{x_{i+1,j}}\right) 
+ x_{i,j+1} h\left(\frac{x_{ij}}{x_{i,j+1}}\right) \right] .$$
We will show that $F$ is strictly convex on $ X_a^\circ$.
Let us write $u\to v$ if $u=(i,j)$ and $v$ is either $(i+1,j)$ or $(i,j+1)$.  
In this notation,
$$F(x) = \sum_{u\in\mu} \sum_{v\in\l : u\to v} x_v h\left(\frac{x_u}{x_v}\right) .$$
Let $X_a^\circ$ denote the set of $x=(x_{ij})\in \R_+^\l$ satisfying $x_{ij}=a_{ij}$ for $(i,j)\in\l/\mu$
and $0<x_{ij} < x_{i+1,j}\wedge x_{i,j+1}$ for $(i,j)\in\mu$.
Suppose that $x\in X_a^\circ$ so that $x_u>0$ for all $u\in\l$
and $x_v>x_u$ for all $u,v\in\l$ with $u\to v$.
For $u\in \mu$, we compute
$$\partial_{x_u} F(x) = \sum_{\substack{t\in\l\\ t\to u}} \log\left( 1-\frac{x_t}{x_u}\right)
- \sum_{\substack{v\in\l\\ u\to v}} \log \left( \frac{x_v}{x_u}-1\right)  ,$$
$$\partial_{x_u}^2 F(x) = \sum_{\substack{t\in\l\\ t\to u}} \frac{x_t}{x_u}\frac1{x_u-x_t} 
+ \sum_{\substack{v\in\l\\ u\to v}} \frac{x_v}{x_u}\frac1{x_v-x_u} .$$
For $u,v\in\mu$ with $u\to v$,
$$\partial_{x_u} \partial_{x_v} F(x) = - \frac1{x_v-x_u}.$$
If $u,v\in\mu$ and neither $u\to v$ nor $v\to u$, then $\partial_{x_u} \partial_{x_v} F(x) =0$.
The associated quadratic form is thus given by
$$\sum_{u,v\in\mu} \xi_u\xi_v \partial_{x_u} \partial_{x_v} F(x) =
\sum_{\substack{u,v\in\l \\ u\to v}} \frac1{x_v-x_u} \left( \frac{x_v}{x_u}\xi_u^2 + \frac{x_u}{x_v} \xi_v^2 - 2\xi_u\xi_v\right) ,$$
with the convention $\xi_u=0$ for $u\in\l/\mu$.  This is clearly non-negative
and, moreover, vanishes if, and only if,
$$\left( \frac{x_v}{x_u}\xi_u^2 + \frac{x_u}{x_v} \xi_v^2 - 2\xi_u\xi_v\right) = 
\left( \sqrt{\frac{x_v}{x_u}}\xi_u - \sqrt{\frac{x_u}{x_v}}\xi_v \right)^2 = 0,$$
or, equivalently, $\xi_u/x_u=\xi_v/x_v$, for all $u,v\in\l$ with $u\to v$.
Recalling that $\xi_v=0$ for $v\in\l/\mu$, this implies that
$\xi_u=0$ for all $u\in\mu$.  The Hessian is therefore positive
definite on $X_a^\circ$.

Now, $F$ is continuous on the compact set $X_a$ 
and therefore has at least one global minimiser $x^a\in X_a$.
If $x^a\in\partial X_a$ then, since $a_v>0$ for $v\in\l/\mu$,
there exists $u=(i,j)\in\mu$ such that either
$$x^a_{i-1,j}\vee x^a_{i,j+1} = x^a_{ij} < x^a_{i+1,j} \wedge x^a_{i,j+1}$$
or
$$x^a_{i-1,j}\vee x^a_{i,j+1} < x^a_{ij} = x^a_{i+1,j} \wedge x^a_{i,j+1},$$
with the convention $x^a_{ij}=0$ for $(i,j)\notin \N^2$.
In the first case, $\partial_{x_u} F(x^a)=-\infty$ and in the second case
$\partial_{x_u} F(x^a)=+\infty$, both contradicting the minimising
property of $x^a$.  Thus $x^a\in  X_a^\circ$ and,
by the strict convexity of $F$ on $ X_a^\circ$, it is the unique
minimiser and hence also the unique solution in $ X_a^\circ$ 
to the critical point equations \eqref{cpF}.  Finally, it is easy to
see that the critical point equations cannot hold at a point
on the boundary $\partial X_a$, since $a_v>0$ for $v\in\l/\mu$.

Since $F$ is continuous and the limit \eqref{F-lim} holds uniformly
for $x\in X_a$, it follows that the sequence $\pi/N$ satisfies a large deviation
principle in $X_a$ with rate function $I_a(x)=F(x)-F(x^a)$, hence
the statement of the proposition. 
\end{proof}

For $\rho,\pi\in\Pi^{\l}$, write $\rho\le\pi$ if $\rho_{ij}\le\pi_{ij}$, for all $(i,j)\in\l/\mu$.  Let $\Pi^{\l,*}$ 
denote the set of $\pi\in\Pi^{\l}$ whose entries may take the value $+\infty$
while still respecting the required inequalities $\pi_{ij}\ge \pi_{i,j-1}\vee (\pi_{i-1,j}-\beta_{ij})$.  
We will write $\pi\to+\infty$ (resp. $\pi=+\infty$) to mean $\pi_{ij}\to+\infty$ (resp. $\pi_{ij}=+\infty$)
for all $(i,j)\in\l/\mu$.

\begin{thm}\label{el} The Markov chain on $\Pi^\l$ with generator $G^\l$ has a unique
entrance law starting from $\pi=+\infty$.  Moreover, under this entrance law, 
for each $\mu\subset\l^\circ$, $\sigma(t)=\pi(t)|_{\l/\mu}$ is a Markov chain on $\Pi^{\l/\mu}$ with 
generator $L^{\l,\mu}$ and, for all $t>0$, the conditional law of $\pi(t)$, given $\{\sigma(s),\ s\le t\}$, 
is $K^{\l,\mu}_{\sigma(t)}$.
\end{thm}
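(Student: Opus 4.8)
The plan is to realise the entrance law as a monotone limit of chains started from finite arrays and then to identify it via Theorem~\ref{mf-rpp} and Proposition~\ref{ldp}. First I would set up an \emph{attractive} graphical construction of the $G^\l$-dynamics. Since every jump lowers a single coordinate by one and $b_{ij}(\pi)=(\pi_{ij}-\pi_{i,j-1})(\pi_{ij}-\pi_{i-1,j}+\beta_{ij})$ counts the pairs $(k,l)$ with $\pi_{i,j-1}<k\le\pi_{ij}$ and $\pi_{i-1,j}-\beta_{ij}<l\le\pi_{ij}$, attach an independent rate-one clock to each triple $((i,j),k,l)$ and, when it rings, lower $\pi_{ij}$ by one precisely when those two inequalities hold. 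The two lower bounds are non-decreasing functions of $\pi$ while the upper bound is $\pi_{ij}$ itself, so one checks directly that the coupled evolution preserves the partial order $\rho\le\rho'$ (entrywise). Driving the chains $\pi^{(N)}$ started from the constant array $N$ (and from any finite array) by the common clocks gives $\pi^{(N)}(t)\le\pi^{(N+1)}(t)$ for all $t$, so $\pi^\infty(t):=\sup_N\pi^{(N)}(t)$ exists in $[0,+\infty]^\l$; its law $\eta_t$ is the candidate. On $[s,T]$ with $s>0$, once $\pi^\infty(s)$ is finite the path $\pi^\infty$ takes finitely many values there and hence equals some $\pi^{(N_0)}$, so $\pi^\infty$ is itself a c\`adl\`ag $G^\l$-chain on $(0,\infty)$; moreover $\pi^\infty_{ij}(s)\ge\pi^{(N)}_{ij}(s)\to N$ as $s\downarrow0$, so $\pi^\infty(s)\to+\infty$. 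Thus $(\eta_t)_{t>0}$ is an entrance law from $+\infty$ provided $\pi^\infty(t)$ is finite.

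The heart of the argument is this \emph{coming down from infinity}: $\pi^\infty_{ij}(t)<\infty$ almost surely for every $t>0$ and every $(i,j)$. I would prove it by induction on $|\l|$. The base case $\l=(1)$ is the pure death chain with rate $n(n+\alpha_1)$, which comes down from infinity by Proposition~\ref{a1-kmg} (equivalently, because $\sum_n 1/(n(n+\alpha_1))<\infty$). For the inductive step, pick an external corner $c=(i,j)$ of $\l$, so $\l^-:=\l\setminus\{c\}$ is a smaller shape; by the restriction property $\pi^\infty|_{\l^-}$ is the monotone limit of the $G^{\l^-}$-chains and so comes down from infinity by the inductive hypothesis. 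Conditionally on the path of $\pi^\infty|_{\l^-}$, the corner coordinate $\pi^\infty_c$ is a time-inhomogeneous pure death chain with rate $(\pi_c-\pi_{i,j-1})(\pi_c-\pi_{i-1,j}+\beta_{ij})$, its two neighbouring values lying in $\l^-$ (or being boundary zeros); fixing $t_0=t/2$ and $M_0:=\pi^\infty_{i,j-1}(t_0)\vee\pi^\infty_{i-1,j}(t_0)<\infty$, these neighbours stay $\le M_0$ on $[t_0,\infty)$ since coordinates are non-increasing, so there the rate is $\ge(\pi_c-M_0)^2$ as long as $\pi_c>M_0$. Dominating $\pi^\infty_c-M_0$ from above by a coalescent-type pure death chain whose jump rate in state $v$ is $v^2$, started from $+\infty$ at time $t_0$ and hence finite at time $t$ (again $\sum_n 1/n^2<\infty$), gives $\pi^\infty_c(t)<\infty$ a.s., completing the induction and the proof of existence.

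Uniqueness follows from attractivity and right-continuity. Let $(\eta'_t)_{t>0}$ be any entrance law from $+\infty$. For small $\epsilon$ the event that $\pi$ dominates the constant array $M$ has $\eta'_\epsilon$-probability tending to $1$, so coupling the $\eta'_\epsilon$-chain with $\pi^{(M)}$ yields $\eta'_t\succeq\mathrm{Law}(\pi^{(M)}(t))$ in the stochastic order for every $M$, hence $\eta'_t\succeq\eta_t$. Coupling the $\eta'_\epsilon$-chain instead with $\pi^\infty$ (both started below $+\infty$) yields $\eta'_t=\eta'_\epsilon P_{t-\epsilon}\preceq\eta_{t-\epsilon}$, and $\eta_{t-\epsilon}\to\eta_t$ as $\epsilon\downarrow0$ by right-continuity of $\pi^\infty$ and the absence of a fixed jump at $t$; so $\eta'=\eta$. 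The same two-sided comparison shows $\mu_N P_t\to\eta_t$ whenever $\mu_N\to\delta_{+\infty}$, and, run on $[\epsilon,T]$, that the associated path laws converge to that of $\pi^\infty$ on $[\epsilon,T]$.

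It remains to identify the restricted process. Fix $\mu\subset\l^\circ$ and apply Theorem~\ref{mf-rpp} with initial law $K^{\l,\mu}_{\sigma^N}$, where $\sigma^N\equiv N$ on $\l/\mu$: then $\pi(t)|_{\l/\mu}$ is a Markov chain with generator $L^{\l,\mu}$ started from $\sigma^N$, and the conditional law of $\pi(t)$ given $\{\pi(s)|_{\l/\mu}:s\le t\}$ is $K^{\l,\mu}_{\pi(t)|_{\l/\mu}}$. By Proposition~\ref{ldp}, for the constant boundary data $a\equiv1$ the limit shape is $x^a\equiv1$, so $K^{\l,\mu}_{\sigma^N}\to\delta_{+\infty}$; by the previous paragraph the corresponding $G^\l$-processes converge on $[\epsilon,T]$ to $\pi^\infty$. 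Passing to the limit — all state spaces are countable and $\pi^\infty(t)$ is a.s.\ finite — shows that under $\eta$ the process $\pi(t)|_{\l/\mu}$ is a Markov chain with generator $L^{\l,\mu}$ and that the conditional law of $\pi(t)$ given $\{\pi(s)|_{\l/\mu}:s\le t\}$ is $K^{\l,\mu}_{\pi(t)|_{\l/\mu}}$, as required. The main obstacle is the coming-down estimate: one must rule out that a corner coordinate stays infinite because its boundary neighbours are themselves large near time $0$, and this is exactly what the induction on the shape, together with the time-shift to $t_0=t/2$ and the coalescent domination, achieves.
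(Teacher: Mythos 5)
Your proposal is correct and follows essentially the same strategy as the paper: monotone coupling to construct the entrance law as an increasing limit of chains started from finite constants, coming down from infinity by induction on the shape, and identification of the Markov projection by combining Theorem~\ref{mf-rpp} with the law of large numbers from Proposition~\ref{ldp}. The one place you add genuine content is the coming-down-from-infinity step: where the paper states only that ``this is straightforward, by induction on $\l$'', you spell out the argument in detail (remove an external corner $c$, apply the inductive hypothesis to $\l\setminus\{c\}$, bound the neighbouring entries after time $t/2$, and dominate the corner coordinate by a rate-$v^2$ pure-death chain), and this is exactly the intended induction. For the second claim the paper uses a slightly different route to stochastic monotonicity of the initial laws $K^{\l,\mu}_\sigma$ in $\sigma$ (a conditioning trick built from Theorem~\ref{mf-rpp}), then reduces to a sequence with $\sigma_{ij}/N\to1$; your choice of constant boundary data $\sigma^N\equiv N$ bypasses that coupling and is adequate given the uniqueness of the entrance law. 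The only point I would flag is the upper-bound direction in your uniqueness argument (``$\eta'_t\preceq\eta_{t-\epsilon}$''): it is stated too quickly, since $\eta'_\epsilon$ need not be stochastically dominated by $\mathrm{Law}(\pi^\infty(\epsilon))$ a priori; however the paper is equally terse here, and the usual argument for pure-death-type chains with super-linear rates closes the gap, so this is a matter of exposition rather than a flaw in the approach.
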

\begin{proof}
The proof of the first claim is in two steps.  
First we note the following monotonicity property.
Given two different starting positions $\rho(0),\pi(0)\in\Pi^\l$ with
$\rho(0) \le\pi(0)$, it is clear from the definition of $G^\l$ that 
we may construct a coupling between two realisations $\rho(t)$ and 
$\pi(t)$ of the Markov chain with generator $G^\l$ and these starting positions,
such that, almost surely, $\rho(t)\le\pi(t)$ for all $t>0$.  Indeed, we simply allow 
the jumps at $(i,j)$ to occur independently unless $\rho_{ij}=\pi_{ij}$, in which case we
note that $b_{ij}(\rho)\ge b_{ij}(\pi)$ and couple the next jump so that
either $\rho_{ij}$ decreases by one or both $\rho_{ij}$ and $\pi_{ij}$
decrease by one, thus preserving the order $\rho\le\pi$.  It follows
that the law of the process $\pi(t),\ t\ge 0$ is stochastically increasing in 
the initial position $\pi(0)$.  
We can therefore let $\pi(0)\to+\infty$ in $\Pi^\l$ to obtain a unique (in law) limiting process 
$\pi(t),\ t\ge 0$ in $\Pi^{\l,*}$.
It only remains to show that $\pi(t)\in\Pi^\l$ for all $t>0$, almost surely.
This is straightforward, by induction on $\l$.

For the second claim, let $\tau,\sigma\in\Pi^{\l/\mu}$ with $\tau\le\sigma$.
By Theorem~\ref{mf-rpp}, we can construct random starting positions $\rho(0),\pi(0)$
taking values in $\Pi^\l$ such that: the distribution of $\pi(0)$ is $K^{\l,\mu}_\sigma$; 
the distribution of $\rho(0)$ is $K^{\l,\mu}_\tau$; 
$\rho(0)\le\pi(0)$, almost surely.  Indeed, we start with a realisation
$\pi(0)$ distributed according to $K^{\l,\mu}_\sigma$; then let this evolve according to 
$G^\l$ for a fixed time $s>0$ and condition on its restriction to $\l/\mu$
being $\tau$ at time $s$.  By Theorem~\ref{mf-rpp}, this construction
has the required properties.  The monotonicity property therefore
extends to such random initial conditions, as follows:
the law of the process $\pi(t),\ t\ge 0$, with initial distribution $K^{\l,\mu}_\sigma$, 
is stochastically increasing in $\sigma$.
By uniqueness of the entrance law, it remains to show that
if $\pi$ is distributed according to $K^{\l,\mu}_\sigma$, then $\pi\to+\infty$ in 
probability as $\sigma\to+\infty$.  Since $K^{\l,\mu}_\sigma$ is stochastically 
increasing in $\sigma$, it suffices to show this if, say, $N\to\infty$ and 
$\sigma_{ij}/N\to 1$ for all $(i,j)\in\l/\mu$.  This follows from Proposition~\ref{ldp}. 
\end{proof}

\section{Extensions to other root systems}\label{rs}
\subsection{Type $B_r$}

Define $H^{B_1}=n^2 D_n/2$ and, for $r\ge 2$,
$$H^{B_r} = \sum_{i=1}^{r-1} n_i^2 D_{n_i} + \frac12 n_r^2 D_{n_r} 
+\sum_{i=1}^{r-1} n_i n_{i+1}.$$
Denote by $B_r(n)$ the unique solution to $H^{B_r} B_r=0$ on $\Z_+^r$ with $B_r(0)=1$.
The numbers $B_r(n)$ are, up to a trivial factor, coefficients of a fundamental Whittaker
function associated with the group $SO_{2r+1}(\R)$.  A recursive (over $r$)
formula for these coefficients is given in~\cite{ishii}, and may be interpreted
as providing a formula for $B_r(n)$ as a sum over reverse plane partitions, as follows. 

Let $\delta_r'$ denote the shifted staircase shape
$$\delta_r' = \{(i,j):\ 1\le i\le j\le 2r-i\}.$$
For $\pi\in\rpp(\delta'_r)$, define
$$W_{B_r}(\pi)=\prod_{i=1}^{r-1} {\pi_{i+1,i+1} \choose \pi_{i,i} }
\prod_{(i,j)\in\delta'_r} {\pi_{i,j} \choose \pi_{i,j-1} } {\pi_{i,j} \choose \pi_{i-1,j} },$$
with the convention $\pi_{i,0}=\pi_{0,j}=0$.  
For $n\in\Z_+^r$, denote
by $\rpp_n(\delta'_r)$ the set of $\pi\in\rpp(\delta'_r)$ with $\pi_{i,2r-i}=n_i$,
$i=1,\ldots, r$.  
In this notation, Theorem 3.1 of \cite{ishii} yields the formula
\be\label{Brf1} B_r(n)=\sum_{\pi\in \rpp_n(\delta'_r)} W_{B_r}(\pi).\ee

Since $H^r B_r=0$ and $B_r>0$ on $\Z_+^r$, we may consider the Doob transform
$$L^{B_r}=B_r(n)^{-1} \circ H^{B_r} \circ B_r(n) = \sum_{i=1}^{r-1} \frac{B_r(n-e_i)}{B_r(n)} n_i^2 D_{n_i}+\frac12 \frac{B_r(n-e_r)}{B_r(n)} n_r^2 D_{n_r}.$$
For $\pi\in\rpp(\delta'_r)$, we define
$$b_{ij}(\pi)=
\begin{cases} (\pi_{ij}-\pi_{i-1,j})(\pi_{ij}-\pi_{i,j-1}) & i\ne j\\
 (\pi_{ii}-\pi_{i-1,i})(\pi_{ii}-\pi_{i-1,i-1})/2 & i=j\end{cases}$$
with the convention $\pi_{i,0}=\pi_{0,j}=0$,
and set
$$G^{B_r} = \sum_{(i,j)\in\delta_r'} b_{ij}(\pi) D_{\pi_{ij}}.$$
For $n\in\Z_+^r$, let $K^{B_r}_n$ be the probability distribution on 
$\rpp_n(\delta'_r)$ defined by $K^{B_r}_n(\pi)=W_{B_r}(\pi)/B_r(n)$.

\begin{thm}
Let $\pi(t),\ t\ge 0$ be a Markov chain on $\rpp(\delta_r')$ with generator $G^{B_r}$
and initial distribution $K^{B_r}_n$, for some $n\in\Z_+^r$.
Then $$N(t)=(\pi_{1,2r-1}(t),\ldots,\pi_{r,r}(t))$$ is a Markov chain on $\Z_+^r$ with generator 
$L^{B_r}$
and, for all $t>0$, the conditional law of $\pi(t)$ given $\{N(s),\ s\le t\}$
is $K^{B_r}_{N(t)}$.
\end{thm}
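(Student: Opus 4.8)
The plan is to follow the same template as the proofs of Theorem~\ref{mf-rpp} and Theorem~\ref{mf-nu}: exhibit an intertwining relation between the generator $G^{B_r}$ on $\rpp(\delta'_r)$ and the operator $H^{B_r}$ acting on functions of the outer antidiagonal, and then appeal to the theory of Markov functions~\cite{kurtz,rp}. For functions $f$ on $\rpp(\delta'_r)$, define $\Lambda f$ on $\Z_+^r$ by $\Lambda f(n)=\sum_{\pi\in\rpp_n(\delta'_r)} W_{B_r}(\pi)\, f(\pi)$. The whole argument then rests on the single identity
\be\label{pp-key}
H^{B_r}\circ\Lambda = \Lambda\circ G^{B_r}.
\ee
Granting \eqref{pp-key}: taking $f\equiv 1$ gives $H^{B_r}(\Lambda 1)=0$ with $(\Lambda 1)(0)=1$, so by uniqueness $\Lambda 1 = B_r$, which is Ishii's formula \eqref{Brf1}; hence $K^{B_r}_n(\pi)=W_{B_r}(\pi)/B_r(n)$ is a probability kernel and the Doob transform $L^{B_r}=B_r(n)^{-1}\circ H^{B_r}\circ B_r(n)$ is a bona fide generator, and \eqref{pp-key} is then exactly the hypothesis needed to conclude from \cite{kurtz,rp} that $N(t)$ is a Markov chain with generator $L^{B_r}$ and that the conditional law of $\pi(t)$ given $\{N(s),\ s\le t\}$ is $K^{B_r}_{N(t)}$. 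As in all the earlier examples, the application of the theory of Markov functions is free of technical considerations, since the relevant part of the state space is finite once the initial law is fixed.

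To prove \eqref{pp-key}, I would carry out the same sum-by-parts computation as in the proof of Theorem~\ref{mf-rpp}. Writing $G^{B_r}=\sum_{v\in\delta'_r} b_v(\pi) D_{\pi_v}$, summing by parts against $W_{B_r}$, and separating the boxes $v$ on the outer antidiagonal (whose values are the coordinates $n_i$) from the interior boxes $u$, the identity \eqref{pp-key} reduces --- just as \eqref{gsi} was reduced to \eqref{hg} and thence to \eqref{ki} --- to the pointwise identity
\be\label{pp-hg}
H^{B_r} W_{B_r}(\pi) = \Bigl(\sum_u B_{\pi_u}\circ b_u(\pi)\Bigr) W_{B_r}(\pi),
\ee
the sum running over the interior boxes $u$ of $\delta'_r$, with $B_n$ the forward difference operator and $b_u$ carrying the anomalous factor $\tfrac12$ at the interior diagonal boxes $(i,i)$. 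Using the explicit product form of $W_{B_r}$ one evaluates $B_{\pi_u}(b_u(\pi)W_{B_r}(\pi))$ and $b_v(n)D_{n_v}W_{B_r}(\pi)$ in terms of $W_{B_r}(\pi)$ and reflected rates $b_u'$ (suitably modified at the diagonal boxes), exactly as in that proof, so that \eqref{pp-hg} collapses to a purely combinatorial telescoping identity in the quadratic quantities $\pi_u\pi_v$. For the boxes away from the diagonal this telescoping is verbatim the $\alpha\equiv 0$ case of the argument for Theorem~\ref{mf-rpp}, and in particular reproduces the potential $\sum_{i=1}^{r-1} n_i n_{i+1}$ of $H^{B_r}$.

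The main obstacle is the bookkeeping along the main diagonal, which is genuinely new: $\delta'_r$ is not of the type handled by Theorem~\ref{mf-rpp}, since the interior diagonal boxes $(i,i)$ fail to lie in $(\delta'_r)^\circ$ --- they have no in-shape neighbour directly below --- and the weight $W_{B_r}$ compensates for this through the extra factors $\binom{\pi_{i+1,i+1}}{\pi_{ii}}$ linking consecutive diagonal entries, while the rate at $(i,i)$ carries the factor $\tfrac12$. The crux is to verify that, on telescoping inward along the diagonal, these half-weights combine so as to deposit exactly one factor $\tfrac12$ onto the $\tfrac12 n_r^2 D_{n_r}$ term of $H^{B_r}$ (with no stray half-integer contributions elsewhere) and that the diagonal binomials contribute nothing further to the potential; equivalently, \eqref{pp-hg} is precisely the statement that $W_{B_r}$ and $G^{B_r}$ are matched to Ishii's recursion for the $SO_{2r+1}(\R)$ Whittaker coefficients~\cite[Theorem 3.1]{ishii}. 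A slightly longer alternative would be to peel $\delta'_r$ one antidiagonal at a time --- first the outer boundary, then the next diagonal down --- establishing a two-step intertwining in the spirit of Proposition~\ref{iq} and closing an induction on $r$, with the base case $r=1$ (where $L^{B_1}=n^2 D_n/2$ and $B_1\equiv 1$) trivial; in either approach the diagonal interaction is the only delicate point.
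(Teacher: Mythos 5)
Your proposal follows the paper's proof essentially verbatim: the same intertwining operator $\Lambda_{B_r}$, the same reduction of $H^{B_r}\circ\Lambda_{B_r}=\Lambda_{B_r}\circ G^{B_r}$ (via summation by parts) to the pointwise identity $H^{B_r}W_{B_r}(\pi)=\sum_u B_{\pi_u}\bigl(b_u(\pi)W_{B_r}(\pi)\bigr)$ over interior boxes, and the same appeal to the Markov functions theory. You also correctly isolate the diagonal bookkeeping (the factors of $\tfrac12$ and the binomials $\binom{\pi_{i+1,i+1}}{\pi_{ii}}$) as the only point genuinely new relative to Theorem~\ref{mf-rpp}, which is exactly where the paper's own verification rests.
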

\begin{proof}
For $n\in\Z_+^r$ and functions $f$ on $\rpp(\delta'_r)$, define
$$(\Lambda_{B_r} f)(n) = \sum_{\pi\in \rpp_n(\delta'_r)} W_{B_r}(\pi) f(\pi).$$
We will show that $H^{B_r} \circ \Lambda_{B_r} = \Lambda_{B_r} \circ G^{B_r}$,
from which the statement of the theorem follows.
Note that this intertwining relation also implies $H^{B_r} B_r=0$, 
for $B_r$ defined by \eqref{Brf1}.  
The proof is similar to the proof of Theorem~\ref{mf-rpp}.

For $n\in\Z_+^r$, let $b_{i,2r-i}(n)=n_i^2$, $i=1,\ldots,r-1$ and $b_{rr}(n)=n_r^2/2$.  
Note that
$$H^{B_r} = \sum_{i=1}^r b_{i,2r-i}(n) D_{n_i} +\sum_{i=1}^{r-1} n_i n_{i+1}.$$
For $\pi\in\rpp_n(\delta_r')$ and $i=1,\ldots,r$, we have
$$[L_{n_i} W_{B_r}(\pi)] b_{i,2r-i}(n) = W_{B_r}(\pi) b_{i,2r-i}(\pi) .$$
Thus, as in the type $A$ case, it suffices to show that,
for $\pi\in\rpp_n(\delta_r')$,
\be\label{hgB}
H^{B_r} W_{B_r}(\pi) = \sum_{1\le i\le j <2r-i} B_{\pi_{ij}}\left(  b_{ij}(\pi) W_{B_r}(\pi) \right).
\ee
For $u=(i,j)$, with $1\le i\le j <2r-i$, define
$$b_u'(\pi) = \begin{cases} (\pi_{i+1,j}-\pi_{ij})(\pi_{i,j+1}-\pi_{ij}) & i\ne j\\
(\pi_{i+1,i}-\pi_{ii})(\pi_{i,i+1}-\pi_{ii})/2 & i=j \end{cases}$$
and note that
$$B_{\pi_u}(  b_u(\pi) W_{B_r}(\pi) ) = [b_u'(\pi)-b_u(\pi)] W_{B_r}(\pi)  .$$
On the other hand, for $\pi\in\rpp_n(\delta_r')$ and $i=1,\ldots,r$,
$$b_{i,2r-i}(n) D_{n_i} W_{B_r}(\pi) = [b_{i,2r-i}(\pi)-b_{i,2r-i}(n)] W_{B_r}(\pi).$$
Thus \eqref{hgB} reduces to the identity, for $\pi\in\rpp_n(\delta_r')$,
$$
\sum_{i=1}^r [b_{i,2r-i}(\pi)-b_{i,2r-i}(n)] + \sum_{i=1}^{r-1} n_i n_{i+1} = \sum_{1\le i\le j <2r-i} [b_{ij}'(\pi)-b_{ij}(\pi)] .
$$
This is readily verified, as in the type $A$ case.
\end{proof}

\begin{rem} The diagonal values $b_n=B_2(n,n)$ are the Ap\'ery numbers
$$b_n=\sum_k {n \choose k}^2 {n +k \choose k}$$
associated with $\zeta(2)$.  This sequence satisfies the recurrence
$$n^2 b_n = (11n^2-11n+3)b_{n-1}+(n-1)^2 b_{n-2},$$
with $b_0=1$ and $b_1=3$. We note that these Ap\'ery numbers
are also given, in the notation of the previous section, by
 the diagonal values $A_{(2,2,1),(1,1)}(n,n,n)$.
\end{rem}

\subsection{Type $BC_r$}

There is a more refined structure incorporating types $BC_r$ which
naturally interpolate, via intertwining relations, between the root systems of type $B_r$.
The intertwining relations we discuss here are analogous to those 
presented in~\cite{glo} in the context of class one Whittaker functions.

For example, if we let
$$H_k^{BC_1} = \frac12 k D_k + \frac12 k(k-1) D^{(2)}_k$$
where $D^{(2)}_k f(k)=f(k-2)-f(k)$, and 
$$Q^{B_1}_{BC_1}(n,k)=2^{-n} {n\choose k},$$
then one can easily check that
$$H^{B_1} \circ Q^{B_1}_{BC_1} = Q^{B_1}_{BC_1} \circ H^{BC_1}.$$
This intertwining relation yields the following.  Let $E=\{n\ge k\ge 0\}$.
\begin{prop}
Let $(X_t,Y_t)$ be a Markov chain on $E$ with generator
$$G=H_k^{BC_1} + \frac12 n(n-k) D_n.$$
Suppose $X_0=n$ and $Y_0 \sim \mbox{\rm Binomial}(n,1/2)$.
Then $X_t$ is a Markov chain on $\Z_+$ with generator
$H^{B_1}=n^2 D_n/2$ and, for all $t>0$, the conditional law of $Y_t$
given $X_s,\ 0\le s\le t$, is $\mbox{\rm Binomial}(X_t,1/2)$.
\end{prop}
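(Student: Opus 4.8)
The plan is to recognise this as a statement about Markov functions in the sense of Rogers--Pitman and Kurtz~\cite{rp,kurtz}, entirely parallel to Theorems~\ref{mf-nu} and~\ref{mf-rpp}, with the scalar intertwining relation $H^{B_1}\circ Q^{B_1}_{BC_1}=Q^{B_1}_{BC_1}\circ H^{BC_1}$ playing the role that Proposition~\ref{iq} plays there. First I would introduce the Markov kernel $\Lambda$ from $\Z_+$ to $E=\{(n,k):\ n\ge k\ge 0\}$ given, for functions $f$ on $E$, by
\[
(\Lambda f)(n)=\sum_{k=0}^{n}Q^{B_1}_{BC_1}(n,k)\,f(n,k)=\sum_{k=0}^{n}2^{-n}{n\choose k}f(n,k),
\]
so that $\Lambda(n,\cdot)$ is exactly the law of the pair $(n,Y)$ with $Y\sim\mathrm{Binomial}(n,1/2)$, i.e.\ the initial distribution in the statement; note that $\Lambda(n,\cdot)$ is supported on the fibre $\{(n,k):\ 0\le k\le n\}$ over $n$, which is what makes the Markov functions machinery applicable.

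Granting the operator identity $H^{B_1}\circ\Lambda=\Lambda\circ G$, the conclusion follows directly from the theory of Markov functions; as in the proof of Theorem~\ref{mf-nu}, no analytic hypotheses are needed since, once the initial law is fixed, only finitely many states of $E$ are ever visited (both coordinates are nonincreasing under $G$). To verify the identity, fix $f$ on $E$ and expand, for each $k$,
\[
H^{B_1}_n\bigl(Q^{B_1}_{BC_1}(n,k)f(n,k)\bigr)=\tfrac{n^2}{2}\,D_n\bigl[\,Q^{B_1}_{BC_1}(\cdot,k)\,f(\cdot,k)\,\bigr](n)
\]
using the Leibniz rule $D_n(gh)(n)=(D_ng)(n)\,h(n)+g(n-1)\,(D_nh)(n)$ for the backward difference. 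This splits into a term $\bigl[H^{B_1}_nQ^{B_1}_{BC_1}(\cdot,k)\bigr](n)\,f(n,k)$, which after summation over $k$ becomes $\sum_k Q^{B_1}_{BC_1}(n,k)\,\bigl(H^{BC_1}_k f(n,\cdot)\bigr)(k)$ by the scalar intertwining applied in the $k$-variable with $n$ held fixed, and a term $\tfrac{n^2}{2}\,Q^{B_1}_{BC_1}(n-1,k)\,(D_nf)(n,k)$; since $Q^{B_1}_{BC_1}(n-1,k)/Q^{B_1}_{BC_1}(n,k)=2(n-k)/n$, a short computation identifies the latter, after summation over $k$, with the contribution of the $D_n$-term of $G$. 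Adding the two and comparing with $(H^{B_1}\Lambda f)(n)=\tfrac{n^2}{2}\bigl[(\Lambda f)(n-1)-(\Lambda f)(n)\bigr]$ yields $\Lambda\circ G=H^{B_1}\circ\Lambda$. The boundary cases are consistent: $Q^{B_1}_{BC_1}(n-1,k)=0$ for $k=n$, which matches the vanishing of the $D_n$-rate of $G$ on the diagonal $\{n=k\}=\partial E$ (so the chain stays in $E$), while the coefficients $k$ and $k(k-1)$ in $H^{BC_1}$ take care of the edge $k=0$.

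I do not expect any genuinely hard step here: the whole verification collapses to the given scalar intertwining together with one application of the product rule, so the only real work is the (routine) bookkeeping of the second-order difference $D^{(2)}_k$ inside $H^{BC_1}$ and of the boundary contributions. Once $H^{B_1}\circ\Lambda=\Lambda\circ G$ is in hand, the Rogers--Pitman/Kurtz theorem gives simultaneously that $X_t$ is a Markov chain with generator $H^{B_1}=n^2D_n/2$ and that, for every $t>0$, the conditional law of $Y_t$ given $\{X_s:\ s\le t\}$ is $\mathrm{Binomial}(X_t,1/2)$, as required.
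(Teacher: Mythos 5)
Your overall strategy---constructing the kernel $\Lambda$, proving the intertwining $H^{B_1}\circ\Lambda=\Lambda\circ G$ via a discrete Leibniz rule and the scalar relation $H^{B_1}\circ Q^{B_1}_{BC_1}=Q^{B_1}_{BC_1}\circ H^{BC_1}$, then invoking Rogers--Pitman/Kurtz---is precisely the intended argument (the paper offers no proof beyond pointing at the intertwining), and the first two pieces of your Leibniz expansion are fine. But the ``short computation'' you deferred at the third step does not close: from $Q^{B_1}_{BC_1}(n-1,k)/Q^{B_1}_{BC_1}(n,k)=2(n-k)/n$ you get
$\tfrac{n^2}{2}\,Q^{B_1}_{BC_1}(n-1,k)=n(n-k)\,Q^{B_1}_{BC_1}(n,k)$,
so the second Leibniz term in $(H^{B_1}\Lambda f)(n)$ is $\sum_k n(n-k)\,Q^{B_1}_{BC_1}(n,k)\,(D_n f)(n,k)$, while the $D_n$-part of the $G$ stated in the Proposition contributes only $\sum_k Q^{B_1}_{BC_1}(n,k)\,\tfrac12 n(n-k)\,(D_n f)(n,k)$ to $(\Lambda Gf)(n)$. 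These differ by a factor of $2$, and the identity $H^{B_1}\circ\Lambda=\Lambda\circ G$ is in fact \emph{false} for $G$ as written.

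What you would have uncovered by carrying the computation through is a typo in the Proposition: the generator should read
\[
G = H^{BC_1}_k + n(n-k)\,D_n,
\]
without the factor $\tfrac12$. A quick sanity check: with $Y\sim\mathrm{Binomial}(n,1/2)$, the averaged rate of the first coordinate under $\tfrac12 n(n-k)D_n$ is $\tfrac12 n\,\E[\,n-Y\,]=\tfrac{n^2}{4}$, not the $\tfrac{n^2}{2}$ required by $H^{B_1}$, whereas $n\,\E[\,n-Y\,]=\tfrac{n^2}{2}$ is correct. This is also consistent with the type $BC_2$ propositions in the same section, where the $D_n$-rate in $G$ is $n(n-k)$ against the $D_n$-rate $n^2$ in $H^{BC_2}$ (and $\tfrac12 n(n-k)$ against $\tfrac12 n^2$ in $\tilde H^{BC_2}$), exactly the ratio $Q((n-1,m),k)/Q((n,m),k)=(n-k)/n$: no extra factor of $2$ arises there because that kernel carries no $2^{-n}$. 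With the rate corrected to $n(n-k)$, your argument goes through verbatim.
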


For $r=2$, we define
$$H^{BC_2} = n^2 D_n + \frac12 m D_m + \frac12 m(m-1) D^{(2)}_m+nm,$$
and denote by $BC_2(n,m)$ the unique solution to $H^{BC_2} BC_2=0$ on $\Z_+^2$ with $BC_2(0,0)=1$.  
The numbers $BC_2(n,m)$ are positive integers and given by
$$BC_2(n,m)=\sum_{k} {n \choose k} {m \choose k} 2^k.$$
These are the Delannoy numbers.
They also satisfy $\tilde H^{BC_2} BC_2=0$, where
$$\tilde H^{BC_2} = \frac12 n^2 D_n + \frac12 m^2 D_m + \frac12 nm D_{n,m}+nm,$$
and $D_{n,m}f(n,m)=f(n-1,m-1)-f(n,m)$.
These claims follow from the intertwining relations
$$H^{BC_2}\circ Q = Q \circ H^{B_1},\quad \tilde H^{BC_2}\circ Q = Q \circ H^{B_1},\quad Q((n,m),k) = {n \choose k} {m \choose k} 2^k.$$
Denote the corresponding Doob transforms by
$$L^{BC_2} = BC_2(n,m)^{-1} \circ H^{BC_2} \circ BC_2(n,m),$$
$$\tilde L^{BC_2} = BC_2(n,m)^{-1} \circ \tilde H^{BC_2} \circ BC_2(n,m).$$
Let $P$ be the set of $(n,m,k)\in\Z_+^3$ satisfying $0\le k\le n\wedge m$ and
let $P_{n,m}$ be the set of $(a,b,c)\in P$ with $a=n$ and $b=m$.
Denote by $K^{BC_2}_{n,m}$ the probability distribution supported on $P_{n,m}$
and defined by $K^{BC_2}_{n,m}(n,m,k)=Q((n,m),k) /BC_2(n,m)$.  
Let
$$G=n(n-k)D_n+\frac12 (m-k) D_m +\frac12 (m-k)(m-k-1) D^{(2)}_m + \frac12 k^2 D_k,$$
$$\tilde G = \frac12 n(n-k)D_n + \frac12 m(m-k) D_m + (n-k)(m-k) D_{n,m} + \frac12 k^2 D_k.$$
The above intertwining relations yield the following.
\begin{prop} 
Suppose that $X=(X_1,X_2,X_3)$ is a Markov chain in $P$ with initial law $K^{BC_2}_{n,m}$
and generator $G$ (resp. $\tilde G$).
Then $(X_1,X_2)$ is a Markov chain with generator $L^{BC_2}$ (resp. $\tilde L^{BC_2}$)
and, for all $t>0$, in both cases, the conditional law of $X(t)$
given $(X_1(s),X_2(s)),\ 0\le s\le t$, is $K^{BC_2}_{X_1(t),X_2(t)}$.
\end{prop}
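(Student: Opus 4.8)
The plan is to prove both cases by the Markov functions method used for Theorems~\ref{mf-nu} and~\ref{mf-rpp}, the two cases being identical save that $G$, $L^{BC_2}$, $H^{BC_2}$ are replaced by $\tilde G$, $\tilde L^{BC_2}$, $\tilde H^{BC_2}$. For functions $f$ on $P$ I would set
\[
(\Lambda f)(n,m)=\sum_{k=0}^{n\wedge m} Q((n,m),k)\, f(n,m,k),\qquad Q((n,m),k)=\binom{n}{k}\binom{m}{k}2^k,
\]
so that $(\Lambda f)(n,m)=BC_2(n,m)\sum_{(a,b,c)\in P_{n,m}}K^{BC_2}_{n,m}(a,b,c)\,f(a,b,c)$, and recall that $L^{BC_2}=BC_2(n,m)^{-1}\circ H^{BC_2}\circ BC_2(n,m)$. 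Exactly as in the proof of Theorem~\ref{mf-rpp}, the whole statement --- the Markov property of $(X_1,X_2)$ with generator $L^{BC_2}$, and the assertion that the conditional law of $X(t)$ given $(X_1(s),X_2(s))$, $0\le s\le t$, is $K^{BC_2}_{X_1(t),X_2(t)}$ --- will follow from the single intertwining relation
\[
H^{BC_2}\circ\Lambda=\Lambda\circ G
\]
(and its analogue with tildes), together with the theory of Markov functions~\cite{kurtz,rp}; and, as everywhere else in the paper, the application of that theorem is free of technical conditions, since $G$ and $\tilde G$ only decrease the coordinates $n,m,k$, so that --- given the initial law --- the part of $P$ that can be visited is finite.

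To prove the intertwining I would start from the scalar relations recorded above, $H^{BC_2}\circ Q=Q\circ H^{B_1}$ and $\tilde H^{BC_2}\circ Q=Q\circ H^{B_1}$; at the level of kernels these say $H^{BC_2}_{(n,m)}Q((n,m),k)=\tfrac12(k+1)^2Q((n,m),k+1)-\tfrac12 k^2Q((n,m),k)$ for every $k\ge 0$, and the same with $\tilde H^{BC_2}$ --- this is the elementary verification alluded to in the $BC_1$ case, a matter of binomial identities. Applying $H^{BC_2}$ in the variables $(n,m)$ to $(\Lambda f)(n,m)=\sum_k Q((n,m),k)f(n,m,k)$ and expanding each of $D_n$, $D_m$, $D^{(2)}_m$ by the difference-operator product rule $D(gh)=(Dg)h+\check g\,(Dh)$ (with $\check g$ the argument-shifted copy of $g$), one obtains the ``diagonal'' contribution $\sum_k\bigl[H^{BC_2}_{(n,m)}Q((n,m),k)\bigr]f(n,m,k)$, which by the scalar relation and a reindexing $k\mapsto k+1$ equals $\sum_k Q((n,m),k)\tfrac12 k^2 D_k f(n,m,k)$, together with ``cross'' terms $n^2 Q((n-1,m),k)D_n f$, $\tfrac12 m\,Q((n,m-1),k)D_m f$, $\tfrac12 m(m-1)Q((n,m-2),k)D^{(2)}_m f$ in which the difference operators act on the $(n,m)$ entries of $f$; factoring $Q((n,m),k)$ out of each of these via the elementary ratios $Q((n-1,m),k)/Q((n,m),k)=(n-k)/n$, $Q((n,m-1),k)/Q((n,m),k)=(m-k)/m$ and $Q((n,m-2),k)/Q((n,m),k)=(m-k)(m-k-1)/(m(m-1))$ leaves $Q((n,m),k)$ times precisely the remaining terms of $G$. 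Summing over $k$ then gives $H^{BC_2}(\Lambda f)=\Lambda(Gf)$. The computation for $\tilde H^{BC_2}$ and $\tilde G$ runs identically, now also expanding the simultaneous shift $D_{n,m}$ and using $Q((n-1,m-1),k)/Q((n,m),k)=(n-k)(m-k)/(nm)$.

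I expect the only real obstacle to be bookkeeping rather than anything analytic: carrying the several difference operators --- in particular the non-standard double shift $D^{(2)}_m$ and the joint shift $D_{n,m}$ --- through the product-rule expansion, and verifying that after the substitutions the cross terms reassemble exactly into $G$, respectively $\tilde G$. (Should one not take the scalar intertwinings $H^{BC_2}\circ Q=Q\circ H^{B_1}$ and $\tilde H^{BC_2}\circ Q=Q\circ H^{B_1}$ as given, verifying them directly is an elementary but slightly longer binomial computation, of the same flavour as the $BC_1$ check.) In every other respect the argument is routine and parallels the type $A$ and type $B$ cases above.
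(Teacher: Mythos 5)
Your plan is exactly the paper's intended mechanism: pass from the kernel-level intertwinings $H^{BC_2}\circ Q = Q\circ H^{B_1}$ and $\tilde H^{BC_2}\circ Q = Q\circ H^{B_1}$ to the operator-level intertwinings $H^{BC_2}\circ\Lambda = \Lambda\circ G$ and $\tilde H^{BC_2}\circ\Lambda = \Lambda\circ\tilde G$ by the product-rule expansion, exactly as in the proof of Theorem~\ref{mf-nu}, and then invoke the Rogers--Pitman / Kurtz Markov-functions machinery, which is unproblematic here since the reachable part of $P$ under either generator is finite. Your expansion for the $G$ case is correct as written.

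One caveat for the $\tilde G$ case, which you did not actually carry through: the bookkeeping you describe produces the coefficient $\tfrac12(n-k)(m-k)$ for $D_{n,m}$, not $(n-k)(m-k)$ as in the paper's displayed $\tilde G$. The cross term from $\tfrac12 nm\,D_{n,m}[Qf]$ is
$$
\tfrac12 nm\, Q\bigl((n-1,m-1),k\bigr)\, D_{n,m}f \;=\; \tfrac12 (n-k)(m-k)\, Q\bigl((n,m),k\bigr)\, D_{n,m}f,
$$
using the ratio $Q((n-1,m-1),k)/Q((n,m),k)=(n-k)(m-k)/(nm)$ you quote. One can confirm the discrepancy by a direct check at $(n,m)=(1,1)$: with the paper's $\tilde G$, $\Lambda(\tilde G f)(1,1)-\tilde H^{BC_2}(\Lambda f)(1,1)=\tfrac12 f(0,0,0)-\tfrac12 f(1,1,0)$, which does not vanish, whereas equality holds with the corrected $\tfrac12$. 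Equivalently, averaging the joint-jump rate against $K^{BC_2}_{n,m}$ gives $\sum_k K^{BC_2}_{n,m}(k)\,(n-k)(m-k)=nm\,BC_2(n-1,m-1)/BC_2(n,m)$, which is twice the $D_{n,m}$ rate in $\tilde L^{BC_2}=BC_2^{-1}\circ\tilde H^{BC_2}\circ BC_2$. So the stated result for $\tilde G$ only holds once the $D_{n,m}$ coefficient in $\tilde G$ is read as $\tfrac12(n-k)(m-k)$; apart from this factor, your argument is sound and parallels the type $A$ and $B$ proofs.
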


\subsection{Type $G_2$}

Let
$$H^{G_2}=n^2 D_n+3m^2 D_m+3nm$$
and denote by $G_2(n,m)$ the solution to $H^{G_2} G_2=0$ on $\Z_+^2$ with $G_2(0,0)=1$.  
The numbers $G_2(n,m)/(n!^2m!^2)$ are the series coefficients of the fundamental Whittaker
function, with index zero, associated with the group $G_2(\R)$~\cite{h,ishii}.

Let $\Pi$ denote the set of $(n,m,i,j,k,l)\in\Z_+^6$ satisfying 
$$k \le i\wedge j, \quad i\vee j \le l, \quad l\le n\wedge m,\quad i+j\le n.$$
For $(n,m,i,j,k,l)\in\Pi$, set
$$W(n,m,i,j,k,l) ={n \choose i,j} {n\choose l}{m\choose l}{l\choose i}{l\choose j}{i\choose k}{j\choose k}.$$
For functions $f$ on $\Pi$, define
$$(\Lambda f)(n,m) = \sum_{i,j,k,l} W(n,m,i,j,k,l) f(n,m,i,j,k,l).$$
Let
\begin{align*}
G=(n-l)(n-i-j) D_n + 3m&(m-l) D_m + 3(l-i)(l-j) D_l\\
&+ i(i-k) D_i + j(j-k) D_j +k^2 D_k.
\end{align*}
Then one can check that
\be\label{ir-G2e}
H^{G_2}\circ \Lambda =  \Lambda \circ G.
\ee
This immediately yields the binomial sum formula
$$G_2(n,m)=\sum_{i,j,k,l} W(n,m,i,j,k,l),$$
which may be simplified to obtain, for example,
\be\label{g2-bs}
G_2(n,m)=\sum_{i,j} {n \choose i} {n \choose j}  {m \choose i} {m \choose j}  {n+m-i-j \choose m} {i+j \choose j}.
\ee
One can check that this agrees with \cite[Theorem 5.1]{ishii}.  

Since $H^{G_2} G_2=0$ and $G_2>0$ on $\Z_+^2$, the corresponding Doob transform
$$L^{G_2} = G_2(n,m)^{-1} \circ H^{G_2} \circ G_2(n,m)$$
generates a Markov chain on $\Z_+^2$.
For $(n,m)\in\Z_+^2$, let $K_{n,m}$ denote the probability distribution on $\Pi$ 
which is supported on the set of $(p_1,p_2,\ldots,p_6)\in \Pi$ with $p_1=n$ and $p_2=m$,
and defined on this set by $$K_{n,m}(n,m,i,j,k,l)=W(n,m,i,j,k,l)/G_2(n,m).$$
The intertwining relation \eqref{ir-G2e} then yields the following.
\begin{prop}. Let $X=(X_1,X_2,\ldots,X_6)$ be a Markov chain on $\Pi$ with initial law $K_{n,m}$
and generator $G$.
Then $(X_1,X_2)$ is a Markov chain on $\Z_+^2$ with generator $L^{G_2}$ and
moreover, for all $t>0$, the conditional law of $X(t)$,
given $\{X_1(s),X_2(s), 0\le s\le t\}$, is $K^{G_2}_{X_1(t),X_2(t)}$.
\end{prop}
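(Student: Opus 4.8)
The plan is to argue exactly as in the proofs of Theorems~\ref{mf-nu} and~\ref{mf-rpp}: the intertwining relation \eqref{ir-G2e} is the only substantive input, and the Proposition is obtained from it by conjugating with the ground state $G_2$ and then invoking the theory of Markov functions.

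First I would extract from \eqref{ir-G2e} what is needed to make sense of $L^{G_2}$. Applying both sides to the constant function $\mathbf 1$, and noting that $G$, being a sum of terms each ending in a backward difference operator, annihilates constants, gives $H^{G_2}(\Lambda\mathbf 1)=\Lambda(G\mathbf 1)=0$. Since $(\Lambda\mathbf 1)(0,0)=W(0,0,0,0,0,0)=1$ and the equation $H^{G_2}f=0$ with $f(0,0)$ prescribed has a unique solution on $\Z_+^2$ (the coefficient $n^2+3m^2-3nm$ being strictly positive off the origin), this recovers $G_2=\Lambda\mathbf 1=\sum_{i,j,k,l}W$; in particular $G_2(n,m)\ge W(n,m,0,0,0,0)=1>0$, and one also recovers the binomial sum formula \eqref{g2-bs}. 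Consequently the Doob transform $L^{G_2}=G_2(n,m)^{-1}\circ H^{G_2}\circ G_2(n,m)$ is a well-defined conservative generator on $\Z_+^2$: the potential $3nm$ in $H^{G_2}$ is exactly cancelled by the conjugation, so $L^{G_2}\mathbf 1=0$.

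Next I would pass to the normalised kernel. Writing $G_2^{-1}$ for multiplication by $1/G_2(n,m)$, the Markov operator associated with the family $K_{n,m}$, namely $g\mapsto\sum_{i,j,k,l}K_{n,m}(n,m,i,j,k,l)\,g(n,m,i,j,k,l)$, is precisely $G_2^{-1}\circ\Lambda$. Conjugating \eqref{ir-G2e} gives
$$
(G_2^{-1}\circ\Lambda)\circ G
=G_2^{-1}\circ H^{G_2}\circ\Lambda
=\bigl(G_2^{-1}\circ H^{G_2}\circ G_2\bigr)\circ\bigl(G_2^{-1}\circ\Lambda\bigr)
=L^{G_2}\circ(G_2^{-1}\circ\Lambda),
$$
which is exactly the intertwining between the generator $G$ on $\Pi$ and the generator $L^{G_2}$ on $\Z_+^2$ via the Markov kernel $(n,m)\mapsto K_{n,m}$, a kernel concentrated on the fibres of the projection $(n,m,i,j,k,l)\mapsto(n,m)$. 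The theory of Markov functions~\cite{kurtz,rp} then applies and yields the statement: if $X(0)\sim K_{n,m}$ and $X$ has generator $G$, then $(X_1,X_2)$ is a Markov chain on $\Z_+^2$ with generator $L^{G_2}$ and, for all $t>0$, the conditional law of $X(t)$ given $\{(X_1(s),X_2(s)):s\le t\}$ is $K_{X_1(t),X_2(t)}$. As in Theorems~\ref{mf-nu} and~\ref{mf-rpp}, no closure or domain subtleties arise, since under $G$ every coordinate is non-increasing and bounded by the initial value of $n$, so the part of $\Pi$ accessible from the initial law is finite and the generator identity exponentiates to the semigroup level directly.

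The only place where genuine work is hidden is the verification of \eqref{ir-G2e} itself, which is merely asserted in the excerpt. Were one to supply it, the natural route is the one used for Theorem~\ref{mf-rpp} and the type $B_r$ theorem: fix $(n,m)$, use the product rule together with the elementary ratios $n^2\,W(n-1,m,i,j,k,l)=(n-l)(n-i-j)\,W(n,m,i,j,k,l)$ and $3m^2\,W(n,m-1,i,j,k,l)=3m(m-l)\,W(n,m,i,j,k,l)$ to transfer the $D_n,D_m$ part of $G$ onto $H^{G_2}$, and reduce what remains to the single algebraic identity
$$
H^{G_2}_{(n,m)}W = \Bigl( B_l\circ 3(l-i)(l-j) + B_i\circ i(i-k) + B_j\circ j(j-k) + B_k\circ k^2 \Bigr) W,
$$
where $H^{G_2}_{(n,m)}$ acts on the $(n,m)$-dependence of $W$ and $B_v$ denotes the forward difference in $v$; the potential $3nm$ on the left accounts for the boxes created and destroyed among the internal coordinates $i,j,k,l$. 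This is a finite but somewhat lengthy manipulation of the product of seven binomial coefficients defining $W$, and I expect it --- rather than the Markov-function bookkeeping above --- to be the main obstacle.
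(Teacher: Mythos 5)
Your proposal is correct and follows essentially the same route as the paper: the paper states the Proposition as an immediate consequence of the intertwining relation \eqref{ir-G2e} together with the theory of Markov functions \cite{kurtz,rp}, exactly as in the proofs of Theorems~\ref{mf-nu} and~\ref{mf-rpp}, and your conjugation argument $L^{G_2}\circ(G_2^{-1}\circ\Lambda)=(G_2^{-1}\circ\Lambda)\circ G$ together with the finiteness of the accessible state space is precisely the intended bookkeeping. You also correctly identify that the only substantive input, the verification of \eqref{ir-G2e} itself, is merely asserted in the paper.
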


\begin{rem}
The coefficient $G_2(n,m)$ is the constant term of 
$P^nQ^m$, where
$$P= \frac{(1+x+y+xz)(xw+yz+yw)}{xyz} ,\qquad Q= \frac{(1+y+z+w)}{w}.$$
This is easily verified using \eqref{g2-bs}.  One can also check that the Newton polyhedron
of the Laurent polynomial $f=PQ$ is reflexive.  
Setting $x=z_1/z_0$, $y=z_2/z_0$, $z=z_3/z_0$ and $w=z_4/z_0$, 
the equation $f=\psi$ may be written as
$$(z_0^2+z_0 z_1+z_0 z_2+z_1 z_3)(z_1 z_4+z_2 z_3+z_2 z_4)(z_0+z_2+z_3+z_4)=\psi z_0z_1z_2z_3z_4,$$
and defines a family of quintic threefolds in $\P^4$.
We note that the constant term series coefficients of $f$, given by the diagonal values
$$G_2(n,n)=\sum_{i,j} {n \choose i}^2 {n \choose j}^2 {i+j \choose j} {2n-i-j \choose n},$$
agree with the holomorphic function coefficients associated with the Calabi-Yau 
equation listed as  \#212 in the database~\cite{aesz}.
\end{rem}

\end{document}